\newtheorem{theorem}{Theorem}[section]
\newtheorem{proposition}[theorem]{Proposition}
\newtheorem{lemma}[theorem]{Lemma}
\newtheorem{corollary}[theorem]{Corollary}
\newtheorem{remark}[theorem]{Remark}
\newtheorem{example}[theorem]{Example}
\newtheorem{definition}[theorem]{Definition}
\newtheorem{proof}{\textmd{\textit{Proof.}}}
\newtheorem{prooof}{\textmd{\textit{Proof of Proposition~$\ref{pr:Beem12.10}$.}}}
\newcommand{\qedd}{\hfill \square}
\newcommand{\ve}{\varepsilon}
\newcommand{\del}{\partial}
\newcommand{\lra}{\longrightarrow}
\newcommand{\e}{\mathrm{e}}
\newcommand{\N}{\ensuremath{\mathbb{N}}}
\newcommand{\R}{\ensuremath{\mathbb{R}}}
\newcommand{\Sph}{\ensuremath{\mathbb{S}}}
\newcommand{\fm}{\ensuremath{\mathfrak{m}}}
\newcommand{\bs}{\ensuremath{\mathbf{s}}}
\newcommand{\bK}{\ensuremath{\mathbf{K}}}
\newcommand{\sB}{\ensuremath{\mathsf{B}}}
\newcommand{\sD}{\ensuremath{\mathsf{D}}}
\newcommand{\sJ}{\ensuremath{\mathsf{J}}}
\newcommand{\sR}{\ensuremath{\mathsf{R}}}
\newcommand{\sT}{\ensuremath{\mathsf{T}}}
\def\vol{\mathop{\mathrm{vol}}\nolimits}
\def\diam{\mathop{\mathrm{diam}}\nolimits}
\def\Hess{\mathop{\mathrm{Hess}}\nolimits}
\def\Ric{\mathop{\mathrm{Ric}}\nolimits}
\def\trace{\mathop{\mathrm{trace}}\nolimits}
\newcommand{\wt}[1]{\widetilde{#1}}
\newcommand{\rev}[1]{\overleftarrow{#1}}
\title{Geometry of weighted Lorentz--Finsler manifolds~I: Singularity theorems}
\author{Yufeng LU\thanks{
Department of Mathematics, Osaka University, Osaka 560-0043, Japan
({\sf u105701g@ecs.osaka-u.ac.jp}, {\sf s.ohta@math.sci.osaka-u.ac.jp})}
\and
Ettore MINGUZZI\thanks{
Dipartimento di Matematica e Informatica ``U. Dini'', Universit\`a degli Studi di Firenze,
Via S.~Marta 3, I-50139 Firenze, Italy ({\sf ettore.minguzzi@unifi.it})}
\and
Shin-ichi OHTA\footnotemark[1] \textsuperscript{,}\thanks{
RIKEN Center for Advanced Intelligence Project (AIP),
1-4-1 Nihonbashi, Tokyo 103-0027, Japan}}
\date{}
\begin{document}

\maketitle

\begin{abstract}
We develop the theory of weighted Ricci curvature in a weighted Lorentz--Finsler framework
and extend the classical singularity theorems of general relativity.
In order to reach this result,
we generalize the Jacobi, Riccati and Raychaudhuri equations to weighted Finsler spacetimes
and study their implications for the existence of conjugate points along causal geodesics.
We also show a weighted Lorentz--Finsler version of the Bonnet--Myers theorem
based on a generalized Bishop inequality.
\end{abstract}


\section{Introduction}

The aim of this work is to develop the theory of weighted Ricci curvature
on \emph{weighted Lorentz--Finsler manifolds}
and show that the classical singularity theorems of general relativity \cite{HE}
can be generalized to this setting.
It is known that singularity theorems can be generalized to Finsler spacetimes \cite{AJ,Min-Ray},
and at least some of them have been generalized to the weighted Lorentzian framework
\cite{Ca,GW,WW1,WW2}
(we refer to \cite{GW,WW1} for some physical motivations in connection with the Brans--Dicke theory).
We will generalize many singularity theorems, including the classical ones
by Penrose, Hawking, and Hawking--Penrose, to the weighted Lorentz--Finsler setting.

By a \emph{weighted Lorentzian manifold} we mean a pair of a Lorentzian manifold $(M,g)$
and a weight function $\psi$ on $M$.
This is equivalent to considering a pair of $(M,g)$ and a measure $\fm$ on $M$
via the relation $\fm =\e^{-\psi} \,\vol_g$, where $\vol_g$ is the canonical volume measure of $g$.
The latter formulation was studied also in the Finsler framework \cite{Oint},
where the weight function associated with a measure needs to be a function
on the tangent bundle $TM \setminus \{0\}$
(since we do not have a unique canonical measure like $\vol_g$).
Motivated by these investigations, we work with an even more general structure,
namely a pair given by a Lorentz--Finsler spacetime $(M,L)$ and
a (positively $0$-homogeneous) function $\psi$ on the set of causal vectors.
In this framework we can include the unweighted case as well (as the case $\psi=0$),
while in general a constant function may not be associated with any measure.
See Section~\ref{sc:wRic} for a more detailed discussion.

Our results will be formulated with the \emph{weighted Ricci curvature} $\Ric_N$,
which is defined for $(M,L,\psi)$ in a similar way to the Finsler case \cite{Oint}
(see Definition~\ref{def:RicN}).
The real parameter $N$ is called the \emph{effective dimension} or the \emph{synthetic dimension}
(in connection with the synthetic theory of curvature-dimension condition, see below).
Our work not only unifies previous results,
but also improves previous findings already in the non-Finsler case,
particularly in dealing with the weight.
A weighted generalization of the Bishop inequality leads us
to a weighted Lorentz--Finsler version of the Bonnet--Myers theorem (Theorem~\ref{th:wBoMy}).
For what concerns singularity theorems,
we obtain not only the weighted Raychaudhuri equation,
but also the weighted Jacobi and Riccati equations (Section~\ref{sc:Ray}).
Moreover, we show that the genericity condition can be used in its classical formulation
(we need to introduce a weighted version as in \cite{Ca,WW2}
only in the extremal case of $N=0$, see Remarks~\ref{rm:t-gene}, \ref{rm:N=0}).
This fact simplifies the statements of some theorems.

Our results apply to every effective dimension,
$N \in (-\infty, 0] \cup [n,+\infty]$ in the timelike case
and $N \in (-\infty, 1] \cup [n,+\infty]$ in the null case.
The idea of including negative values of $N$ is recent, see \cite{WW1,WW2} for the Lorentzian case
(for us the spacetime dimension is $n+1$,
which means that the formulas in previous references have to undergo the replacements
$n \mapsto n+1$ and $N \mapsto N+1$ to be compared with our owns,
see Remark~\ref{rm:n+1}).
Our formulation of \emph{$\epsilon$-completeness} (Definitions~\ref{df:e-cplt}, \ref{df:ne-cplt}),
which is a key concept in singularity theorems,
generalizes that in \cite{WW1,WW2} and is very accurate:
We are able to identify a family of time parameters,
depending on a real variable $\epsilon$ belonging to an $\epsilon$-range dependent on $N$,
for which the incompleteness holds (see Propositions~\ref{bdo}, \ref{bdp}).
For $\epsilon=1$ one recovers the ordinary concept of completeness,
while for $\epsilon=0$ one recovers the $\psi$-completeness studied in \cite{WW1,WW2}.
Our $N$-dependent $\epsilon$-range explains why for $N \in [n,\infty)$
one can infer both (unweighted and weighted) forms of incompleteness,
while for negative $N$ one can infer only the $\psi$-incompleteness.

The investigation of weighted Lorentz--Finsler manifolds is meaningful
also from the view of synthetic studies of Lorentzian geometry.
This is motivated by the important breakthrough in the positive-definite case,
a characterization of the lower (weighted) Ricci curvature bound by the convexity of an entropy
in terms of optimal transport theory, called the \emph{curvature-dimension condition} CD$(K,N)$
(roughly speaking, CD$(K,N)$ is equivalent to $\Ric_N \ge K$).
We refer to \cite{CMS,LV,vRS,StI,StII,Vi} for the Riemannian case
and to \cite{Oint} for the Finsler case.
The curvature-dimension condition can be formulated in metric measure spaces
without differentiable structures.
Then one can successfully develop comparison geometry and geometric analysis
on such metric measure spaces.
Lorentzian counterparts of such a synthetic theory attracted growing interest recently,
see for instance \cite{AB,KuSa} for triangle comparison theorems,
\cite{BP,Br,EM,KeSu,Su} for optimal transport theory,
\cite{Mc} for a direct analogue to the curvature-dimension condition,
and \cite{MS} for an optimal transport interpretation of the Einstein equation.
We also refer to \cite{GKS,Min-causality}, the proceedings \cite{CGKM} and the references therein
for related investigations of less regular Lorentzian spaces.
Since the curvature-dimension condition is available both in Riemannian and Finsler manifolds,
it is important to know what kind of comparison geometric results
can be generalized to the Finsler setting.
Thus the results in this article will give some insights in the synthetic study of Lorentzian geometry.
We will continue the study of weighted Lorentz--Finsler manifolds
in a forthcoming paper on splitting theorems.

\subsection{The structure of singularity theorems} \label{com}

Although our work will contain results whose scope exceeds that of singularity theorems,
it will be convenient to mention how singularity theorems are typically structured,
for that will clarify the focus of this work.

Singularity theorems are composed of the following three steps
(see \cite[Section~6.6]{Min-Rev} for further discussions):

\begin{enumerate}[I.]
\item\label{step1}
A non-causal statement assuming some form of geodesic completeness
plus some genericity and convergence conditions,
and implying the existence of conjugate points along geodesics or focal points
for certain (hyper)surfaces with special convergence properties,
e.g., our Corollaries~\ref{mci} and \ref{bhd}.
This step typically makes use of the Raychaudhuri equation.

\item\label{step2}
A non-causal statement to the effect that the presence of conjugate or focal points
spoils some length maximization property (achronal property in the null case),
for instance \cite[Proposition~5.1]{Min-Ray} will be used to show Proposition~\ref{vip}.

\item\label{step3}
A statement to the effect that under some causality conditions
as well as in presence of some special set (trapped set, Cauchy hypersurface)
the spacetime necessarily has a \emph{causal line} (a maximizing inextendible causal geodesic)
or a causal $S$-ray.
\end{enumerate}

The first two results go in contradiction with the last one,
so from here one infers the geodesic incompleteness.


Interestingly, the first two steps basically coincide for all the singularity theorems.
For instance, Penrose's and Gannon's singularity theorems \cite{Ga,P},
but also the topological censorship theorem \cite{FSW},
use the same versions of Steps~\ref{step1}, \ref{step2}.
Similarly, Hawking--Penrose's and Borde's singularity theorems \cite{Bo,HP}
use the same versions of Steps~\ref{step1}, \ref{step2}.
Most singularity theorems really differ just for the causality statement in Step~\ref{step3}.
For this reason, it is often convenient to identify the singularity theorem
with its \emph{causality core statement}, namely Step~\ref{step3}.
It turns out that this causality core statement in most cases involves just the cone distribution,
thereby it is fairly robust.

For instance,  we shall work with a Lorentz--Finsler space of Beem's type which is a special case
of a more general object called a \emph{locally Lipschitz proper Lorentz--Finsler space},
see \cite[Theorem~2.52]{Min-causality},
which is basically a distribution of closed cones $x \mapsto \overline{\Omega}_x$
plus a function $F:\overline{\Omega} \lra \R$ satisfying certain regularity properties.
For this structure and hence for our setting,
one can prove the following causality statement \cite[Theorem~2.67]{Min-causality}
(this result actually holds for more general \emph{closed cone structures}):
{\em In a Finsler spacetime admitting a non-compact Cauchy hypersurface
 every nonempty compact set $S$ admits a future lightlike $S$-ray.}
(The various terms will be clarified in what follows.)

There is also a simpler approach by which one can understand the validity of this type of causality core statements.
The local causality theory makes use of the existence of convex neighborhoods,
but does not make use of the curvature tensor.
The curvature tensor really makes its appearance only in Steps~\ref{step1} and \ref{step2} above.
Thus all the proofs of these causality core statements, being of topological nature,
pass through word-for-word from the Lorentzian to the Lorentz--Finsler case,
and since the weight is not used, to the weighted Lorentz--Finsler case.
These topological proofs can then be read from reviews of Lorentzian causality theory,
e.g., \cite[Theorem~6.23]{Min-Rev} includes the above statement.


It is  important to understand that what we shall be  doing in the following sections
is to generalize Step~\ref{step1}.
Step~\ref{step2} has been already adapted to the Lorentz--Finsler setting in \cite{Min-Ray},
and hence to the weighted Lorentz--Finsler setting since it does not use the weight.
Step~\ref{step3} was also already generalized in \cite{Min-causality}
to frameworks broader than that of this work.
In this sense we are not considering the most general situation,
and we do not intend to make a full list of applications.
We wish to show that singularity theorems can be generalized to the weighted Lorentz--Finsler case,
by presenting several singularity theorems for the sake of illustrating the general strategy.
Once Steps~\ref{step1} and \ref{step2} are established,
by selecting a different causality core statement in Step~\ref{step3},
one can obtain other singularity theorems not explicitly considered in this article
(we refer to \cite[Section~8]{Min-Ray}, \cite[Section~6.6]{Min-Rev},
and \cite[Section~2.15]{Min-causality} for further singularity theorems
as well as more general statements).

\subsection{Notations and organization of the paper}

Let us fix some terminologies and notations.
Riemannian and Finsler manifolds have positive-definite metrics.
The analogous structures in the Lorentzian signature will be called
Lorentzian manifolds and Lorentz--Finsler manifolds.
Lorentz--Finsler manifolds are also known as Lorentz--Finsler spaces
in other references, for example, \cite{Min-causality}.
The Lorentzian signature we use is $(-,+,\dots, +)$.
We stress that the dimension of the spacetime manifold is always $n+1$,
and the indices will be taken as $\alpha=0,1,\ldots,n$.

This article is organized as follows.
In Sections~\ref{sc:FLmfd} and \ref{sc:curv}, we introduce necessary notions of Finsler spacetimes,
including some causality conditions and the flag and Ricci curvatures.
We then introduce the weighted Ricci curvature in Section~\ref{sc:wRic}.
In Sections~\ref{sc:Ray} and \ref{sc:n-Ray},
we study the timelike and null Raychaudhuri equations, respectively,
which are applied in Section~\ref{sc:conj}
to investigate the existence of conjugate points along geodesics.
Finally, Section~\ref{sc:sing} is devoted to the proofs of some notable singularity theorems,
along the strategy outlined in Subsection~\ref{com} above.

\section{Finsler spacetimes}\label{sc:FLmfd}

\subsection{Lorentz--Finsler manifolds}\label{ssc:FLmfd}

Let $M$ be a connected $C^{\infty}$-manifold of dimension $n+1$ without boundary.
Given local coordinates $(x^\alpha)_{\alpha=0}^n$ on an open set $U \subset M$,
we will use the fiber-wise linear coordinates $(x^\alpha,v^\beta)_{\alpha,\beta=0}^n$ of $TU$
such that
\[ v=\sum_{\beta=0}^n v^\beta \frac{\del}{\del x^\beta}\Big|_x, \quad x \in U. \]
We employ Beem's definition of Lorentz--Finsler manifolds  \cite{Be}
(see Remark~\ref{rm:2cone} below for the relation with the other definitions).

\begin{definition}[Lorentz--Finsler structure]\label{df:FLstr}
A \emph{Lorentz--Finsler structure} of $M$ will be a function
$L\colon TM \lra \R$ satisfying the following conditions:
\begin{enumerate}[(1)]
\item \label{it:LF-1}
$L \in C^{\infty}(TM \setminus \{0\})$;

\item \label{it:LF-2}
$L(cv)=c^2 L(v)$ for all $v \in TM$ and $c>0$;

\item \label{it:LF-3}
For any $v \in TM \setminus \{0\}$, the symmetric matrix
\begin{equation}\label{eq:g_ij}
g_{\alpha \beta}(v) := \frac{\del^2 L}{\del v^\alpha \del v^\beta}(v),
 \quad \alpha,\beta=0,1,\ldots,n,
\end{equation}
is non-degenerate with signature $(-,+,\ldots,+)$.
\end{enumerate}
We will call $(M,L)$ a \emph{Lorentz--Finsler manifold} or a \emph{Lorentz--Finsler space}.
\end{definition}

We stress that the homogeneity condition \eqref{it:LF-2} is imposed only in the positive direction ($c>0$),
thus $L(-v) \neq L(v)$ is allowed.
We say that $L$ is \emph{reversible} if $L(-v)=L(v)$ for all $v \in TM$.
The matrix $(g_{\alpha \beta}(v))_{\alpha,\beta=0}^n$ in \eqref{eq:g_ij} defines
the Lorentzian metric $g_v$ of $T_xM$ by
\begin{equation}\label{eq:g_v}
g_v\bigg( \sum_{\alpha=0}^n a^\alpha \frac{\del}{\del x^\alpha}\Big|_x,
 \sum_{\beta=0}^n b^\beta \frac{\del}{\del x^\beta}\Big|_x \bigg)
 :=\sum_{\alpha,\beta=0}^n a^\alpha b^\beta g_{\alpha \beta}(v).
\end{equation}
By construction $g_v$ is the second order approximation of $2L$ at $v$.
Similarly to the positive-definite case,
the metric $g_v$ and Euler's homogeneous function theorem (see \cite[Theorem~1.2.1]{BCS})
will play a fundamental role in our argument.
We have for example
\[ g_v(v,v) =\sum_{\alpha,\beta=0}^n v^\alpha v^\beta g_{\alpha \beta}(v) =2L(v). \]

\begin{definition}[Timelike vectors]\label{df:time}
We call $v \in TM $ a \emph{timelike vector} if $L(v)<0$
and a \emph{null vector} if $L(v)=0$.
A vector $v$ is said to be \emph{lightlike} if it is null and nonzero.
The \emph{spacelike vectors} are those for which $L(v)>0$ or $v=0$.
The \emph{causal} (or \emph{non-spacelike}) vectors are those which are lightlike or timelike
($L(v) \le 0$ and $v \neq 0$).
The set of timelike vectors will be denoted by
\[ \Omega'_x:=\{ v \in T_xM \,|\, L(v)<0 \},
 \qquad \Omega' :=\bigcup_{x \in M} \Omega'_x. \]
\end{definition}

Sometimes we shall make use of the function
$F: \overline{\Omega'} \lra [0,+\infty)$ defined by
\begin{equation}\label{eq:F}
F(v) := \sqrt{-g_v(v,v)} =\sqrt{-2L(v)},
\end{equation}
which measures the `length' of causal vectors.
The structure of the set of timelike vectors was studied in \cite{Be}.
We summarize fundamental properties in the next lemma,
see also \cite{Be,Per,Min-cone} for more detailed investigations.

\begin{lemma}[Properties of $\Omega'_x$]\label{lm:time}
Let $(M,L)$ be a Lorentz--Finsler manifold and $x \in M$.
\begin{enumerate}[{\rm (i)}]
\item \label{it:time-1}
We have $\Omega'_x \neq \emptyset$.

\item \label{it:time-2}
For each $c<0$, $T_xM \cap L^{-1}(c)$ is nonempty and positively curved
with respect to the linear structure of $T_xM$.

\item \label{it:time-3}
Every connected component of $\Omega'_x$ is a convex cone.
\end{enumerate}
\end{lemma}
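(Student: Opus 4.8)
The plan is to fix once and for all a reference Euclidean inner product $\langle\cdot,\cdot\rangle$ on $T_xM$ (the one making $(\del/\del x^\alpha)$ orthonormal) and to let $\Sph\subset T_xM$ be the corresponding unit sphere. Two consequences of positive $2$-homogeneity (condition~\ref{it:LF-2}) drive everything: by Euler's theorem one has $dL_v(w)=g_v(v,w)$ for all $w$, and the coordinate Hessian of $L$ at $v$ is, by the very definition~\eqref{eq:g_ij}, the bilinear form $g_v$. Thus the extrinsic geometry of the level sets of $L$, computed with respect to the linear structure of $T_xM$, is expressed directly through $g_v$.

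For part~\ref{it:time-1} I would argue by contradiction. If $\Omega'_x=\emptyset$ then $L\ge0$, so $L|_{\Sph}$ attains a minimum at some $v_0\in\Sph$ by compactness. For $w\perp v_0$ the identity $|v_0+tw|^2=1+t^2|w|^2$ and $2$-homogeneity give $L(v_0+tw)=(1+t^2|w|^2)\,L((v_0+tw)/|v_0+tw|)$; differentiating twice at $t=0$ and using that the second factor has a minimum there yields $g_{v_0}(w,w)\ge2|w|^2L(v_0)\ge0$. Since $v_0$ is critical for $L|_{\Sph}$ one also gets $g_{v_0}(v_0,w)=dL_{v_0}(w)=0$ for $w\perp v_0$, while $g_{v_0}(v_0,v_0)=2L(v_0)\ge0$; hence $g_{v_0}\ge0$ on all of $T_xM$, contradicting the signature in condition~\ref{it:LF-3}. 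For part~\ref{it:time-2}, nonemptiness is immediate from scaling, since $L(rv)=r^2L(v)$ ranges over $(-\infty,0)$ for a timelike $v$. For the curvature, at $v\in L^{-1}(c)$ with $c<0$ the tangent hyperplane is $\ker dL_v=\{w:g_v(v,w)=0\}$, the $g_v$-orthogonal complement of $v$; as $g_v(v,v)=2c<0$ makes $v$ timelike for $g_v$, the form $g_v$ is positive definite there. Because the Euclidean Hessian of $L$ equals $g_v$, the second fundamental form of $L^{-1}(c)$ (with respect to the normal $\nabla L$) is a positive multiple of $g_v$ restricted to the tangent space, hence positive definite, so $L^{-1}(c)$ is positively curved.

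For part~\ref{it:time-3}, the cone property follows because $L(cv)=c^2L(v)$ makes $\Omega'_x$ invariant under positive dilations, and the dilation $v\mapsto cv$ is a homeomorphism homotopic to the identity through homeomorphisms of $\Omega'_x$, hence fixes each component $C$ setwise. The substance is convexity, which I would obtain by a connectedness argument. Fix $C$ and set
\[ P:=\{(v,w)\in C\times C : (1-s)v+sw\in C \text{ for all } s\in[0,1]\}. \]
$P$ is open, contains the diagonal, and $C\times C$ is connected, so it suffices to show $P$ is closed in $C\times C$. If $(v_k,w_k)\in P$ converge to $(v,w)$, the segments converge to $[v,w]$, on which $L\le0$ by continuity; I claim $L<0$. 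Otherwise $\ell(s):=L((1-s)v+sw)$ attains an interior maximum value $0$ at some $s_0$, where $m:=(1-s_0)v+s_0w$ is null, $\ell'(s_0)=g_m(m,w-v)=0$, and $\ell''(s_0)=g_m(w-v,w-v)\le0$. But for a $g_m$-null vector $m$ the form $g_m$ is positive semidefinite on $m^{\perp}$ with kernel $\R m$, so $w-v\in m^{\perp}$ forces $\ell''(s_0)=0$ and $w-v\in\R m$; then $v$ and $w$ are multiples of the null vector $m$, contradicting $L(v)<0$. The only remaining danger, that $[v,w]$ passes through the origin, is excluded by pointedness of $C$ (the two $g_v$-timelike half-cones are disjoint, so $C$ contains no antipodal pair of directions). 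Hence $L<0$ on the segment, which therefore lies in $\Omega'_x$ and, being connected, in $C$; thus $P$ is closed and $C$ is convex.

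I expect the convexity in part~\ref{it:time-3} to be the main obstacle: the second-order (local) convexity of the level sets from part~\ref{it:time-2} must be promoted to global convexity of the components. The device above recasts this as a maximum-principle statement whose crux is the positive semidefiniteness of $g_m$ on the $g_m$-orthogonal complement of a null vector $m$ — the boundary analogue of the positive definiteness used in part~\ref{it:time-2}. Verifying pointedness carefully, so as to rule out a segment through the origin, is the secondary technical point.
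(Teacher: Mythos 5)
Parts (i) and (ii) of your proposal are correct and are in substance the paper's own argument: the paper contradicts the signature at an extremal point of a level set (your spherical minimum $v_0$ does the same job, handling both cases at once), and for (ii) it cites Beem's comparison of $L$ with its second-order approximation $g_v$, which is exactly your computation that the second fundamental form of $L^{-1}(c)$ is a positive multiple of the positive-definite form $g_v$ restricted to $\ker dL_v$. For (iii) the paper defers entirely to (ii) (Beem's local-to-global convexity lemma), so your self-contained open-and-closed scheme is a genuinely different route, and most of it is sound: the maximum-principle step for a limit segment \emph{avoiding the origin} is correct, using that $g_m$ is positive semidefinite on the $g_m$-orthogonal complement of a $g_m$-null vector $m$ with kernel $\R m$ (note that your conclusion ``$v,w$ are multiples of $m$, contradicting $L(v)<0$'' needs these multiples to be \emph{positive}, since $L$ is only positively homogeneous; this is again precisely the origin-avoidance hypothesis).

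The genuine gap is pointedness, which you dispose of in one parenthesis. That ``the two $g_v$-timelike half-cones are disjoint'' is a fact about the single quadratic form $g_v$; to conclude that $C$ contains no antipodal pair you would need $C$ to lie inside one $g_v$-half-cone, and that containment (a reverse Cauchy--Schwarz/support property) is equivalent to the global convexity you are in the middle of proving, so the step is circular. Moreover, the gap cannot be closed by more of the same segment analysis: if $v,-v\in C$, then your own result for origin-avoiding segments (through any auxiliary $w'\in C\setminus\R v$) together with the cone property only yields that $C$ contains the punctured closed half-plane $\overline{H}\setminus\{0\}$, where $\overline{H}=\{sw'+tv:\ s\ge 0,\ t\in\R\}$, and $L<0$ on this set is consistent with everything your tools can see: the only zero of $L$ there is at the puncture, where $L$ is not twice differentiable, so no Hessian or maximum-principle argument applies. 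Excluding this configuration requires a new ingredient. One that works: in the $2$-plane $V=\mathrm{span}(v,w')$ write $L|_V=r^2h(\theta)$; since $g_u$ has exactly one negative direction, its restriction to $V$ is never negative definite, and where $h<0$ a short polar-coordinate computation shows this forces $4h^2+2hh''-(h')^2\le 0$, i.e.\ $u:=\sqrt{-h}$ satisfies $u''+u\le 0$; by Sturm comparison $u>0$ can then persist only on an arc of length at most $\pi$, whereas $\overline{H}\setminus\{0\}\subset C$ plus openness of $C$ would give an arc of timelike directions in $V$ of length strictly greater than $\pi$. Alternatively one can simply follow the paper and invoke Beem's lemma promoting the local convexity of the level sets (ii) to global convexity of the components. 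Either way, pointedness is where the real content of (iii) sits, and as written your proof does not establish it.
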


\begin{proof}
\eqref{it:time-1}
If $L(v)>0$ for all $v \in T_xM \setminus \{0\}$,
then $T_xM \cap L^{-1}(1)$ is compact and $L$ is nonnegative-definite
at an extremal point of $T_xM \cap L^{-1}(1)$.
This contradicts Definition~\ref{df:FLstr}\eqref{it:LF-3}.
If $L \ge 0$ on $T_xM$ and there is $v \in T_xM \setminus \{0\}$ with $L(v)=0$,
then $L$ is again nonnegative-definite at $v$ and we have a contradiction.
Therefore we conclude $\Omega'_x \neq \emptyset$.

\eqref{it:time-2}
The first assertion $T_xM \cap L^{-1}(c) \neq \emptyset$ is straightforward from \eqref{it:time-1}
and the homogeneity of $L$.
The second assertion is shown by comparing $L$ and its second order approximation $g_v$
at $v \in T_xM \cap L^{-1}(c)$ (see \cite[Lemma~1]{Be}).

\eqref{it:time-3}
This is a consequence of \eqref{it:time-2}.
$\qedd$
\end{proof}

In the $2$-dimensional case ($n+1=2$),
the number of connected components of $\Omega'_x$ is not necessarily $2$,
even when $L$ is reversible (i.e., $L(-v)=L(v)$).

\begin{example}[Beem's example, \cite{Be}]\label{ex:Beem}
Let us consider the Euclidean plane $\R^2$.
Given $k \in \N$, we define $L:\R^2 \lra \R$ in the polar coordinates by $L(r,\theta) :=r^2 \cos k\theta$.
Then $\Hess L(r,\theta)$ has the negative determinant for $r>0$,
and the number of connected components of $\{ x \in \R^2 \,|\, L(x)<0 \}$ is $k$.
Notice that $L(r,\theta+\pi)=L(r,\theta)$ (reversible) if $k$ is even,
and $L(r,\theta+\pi)=-L(r,\theta)$ (non-reversible) if $k$ is odd.
\end{example}

This phenomenon could be regarded as a drawback of the formulation of Definition~\ref{df:FLstr}
from the view of theoretical physics, since it is difficult to interpret the causal structure
of such multi-cones (see \cite{Min-cone} for further discussions).
However, in the reversible case, it turned out that such an ill-posedness occurs only when $n+1=2$
(\cite[Theorem~7]{Min-cone}).

\begin{theorem}[Well-posedness for $n+1 \ge 3$]\label{th:cone}
Let $(M,L)$ be a reversible Lorentz--Finsler manifold of dimension $n+1 \ge 3$.
Then, for any $x \in M$, the set $\Omega_x'$ has exactly two connected components.
\end{theorem}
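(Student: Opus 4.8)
The plan is to reduce the statement to a connectivity assertion on the unit sphere and to exploit the pointwise Lorentzian signature of the fundamental tensor through the homogeneity of $L$.

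First I would fix $x$, identify $T_xM$ with $\R^{n+1}$ equipped with an auxiliary Euclidean structure $\langle\,,\rangle$, and pass to the sphere by setting $f:=L|_{\Sph^n}$, so that the timelike directions are $\{f<0\}$. Since each component of $\Omega'_x$ is an open convex cone (Lemma~\ref{lm:time}) while $L(0)=0$ by $2$-homogeneity, no component $C$ can contain an antipodal pair $\pm v$: its midpoint $0$ would then lie in $C\subset\Omega'_x$. Hence $C\cap(-C)=\emptyset$, each $C$ is salient, and $C\cap\Sph^n$ lies in an open hemisphere; reversibility $f(-\omega)=f(\omega)$ makes the antipodal map a fixed-point-free involution of the set of components. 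Once $0$ is shown to be a regular value of $f$ (below), $\{f=0\}$ is a compact hypersurface with finitely many components, so there are finitely many timelike cones, say $2m$ of them, each mapping homeomorphically onto a contractible component of the image $W$ of $\{f<0\}$ in $\mathbb{RP}^n$. The theorem is exactly the claim $m=1$.

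The engine is a consequence of homogeneity. With $g_v$ the fundamental tensor from \eqref{eq:g_v}, Euler's theorem gives $g_v(v,\cdot)=dL_v$ and $g_v(v,v)=2L(v)$, and a standard computation for $2$-homogeneous functions yields, for $X,Y\in T_\omega\Sph^n$,
\[ g_\omega(X,Y)=\Hess_{\Sph^n}f(X,Y)+2f(\omega)\,\langle X,Y\rangle . \]
At a critical point of $f$ the covector $dL_\omega$ is radial, so $g_\omega(\omega,\cdot)=2f(\omega)\langle\omega,\cdot\rangle$; thus $\omega$ is an eigenvector of $g_\omega$ relative to $\langle\,,\rangle$ with eigenvalue $2f(\omega)$, and the $\langle\,,\rangle$-orthogonal complement $T_\omega\Sph^n$ is $g_\omega$-invariant. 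As $g_\omega$ has Lorentzian signature (Definition~\ref{df:FLstr}), this forces: no critical point has $f=0$, else $\omega\in\ker g_\omega$, so $0$ is regular; at a critical point with $f<0$ the radial eigenvalue is the unique negative direction, whence $g_\omega|_{T\Sph^n}>0$ and $\Hess_{\Sph^n}f=g_\omega|_{T\Sph^n}-2f\langle\,,\rangle$ is positive definite, so every timelike critical point is a nondegenerate minimum; at a critical point with $f>0$ the negative direction is tangential and the index is $\ge1$. Applying the same identity at a null direction $\omega$ (where $f=0$) gives $\Hess_{\Sph^n}f|_{T_\omega\Sigma}=g_\omega|_{T_\omega\Sigma}$, which is positive definite: on $\{u:g_\omega(\omega,u)=0\}$ the form $g_\omega$ is positive semidefinite with kernel $\R\omega$, and $T_\omega\Sigma$ is a complement of that kernel. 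Hence every component of $\Sigma=\{f=0\}$ is a strictly convex closed hypersurface of $\Sph^n$, bounding its timelike cap as a strictly convex body.

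The remaining, and genuinely hardest, step is to upgrade this local convexity to the global count $m=1$, and this is where $n+1\ge3$ must enter essentially: for $n=1$ the conclusion is false, since Beem's multi-cone functions (Example~\ref{ex:Beem}) satisfy all the pointwise conditions above on $\Sph^1$. Thus no purely local or soft-topological argument can succeed — one checks that a configuration of $2m$ disjoint convex caps is consistent with the Euler characteristic of $\Sph^n$ for every $m$. The additional input is the interplay of the antipodal symmetry with the strict convexity of $\Sigma$ on the simply connected sphere $\Sph^n$ with $n\ge2$: the plan is to show that a second antipodal pair of convex caps cannot be inserted without forcing, along a great circle joining interior points of two non-antipodal caps, a spacelike critical configuration incompatible with the index restriction above, equivalently that $W\subset\mathbb{RP}^n$ is connected. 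Making this separation argument rigorous is precisely the content of the cone-structure theorem of Minguzzi, and I would finish by invoking \cite[Theorem~7]{Min-cone}; the role of the steps above is to reduce Theorem~\ref{th:cone} to that connectivity statement and to supply the strict convexity of the null cone that drives it.
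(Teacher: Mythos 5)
Your local analysis is essentially correct: the identity $g_\omega(X,Y)=\Hess_{\Sph^n}f(X,Y)+2f(\omega)\langle X,Y\rangle$ for $X,Y\in T_\omega\Sph^n$ does follow from the $2$-homogeneity of $L$ together with $g_v(v,\cdot)=dL_v$, and it gives what you claim: $0$ is a regular value of $f$, timelike critical points are nondegenerate minima, the null caps are strictly convex, and (via the salience of the convex components from Lemma~\ref{lm:time} and the free antipodal involution coming from reversibility) Theorem~\ref{th:cone} reduces to the connectedness of the projected set $W\subset\mathbb{RP}^n$. You are also right that no pointwise or Euler-characteristic argument can close the case $n\ge 2$, since Beem's functions in Example~\ref{ex:Beem} satisfy all the local conditions on $\Sph^1$.

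The difficulty is that your final step is circular. The paper contains no proof of Theorem~\ref{th:cone}: the statement is imported verbatim from \cite[Theorem~7]{Min-cone}, which is precisely the result you propose to ``finish by invoking.'' Citing the theorem being proved is not a proof, and the genuinely hard content --- why $n\ge 2$ excludes a second antipodal pair of cones, equivalently why $W$ is connected --- is exactly what survives your reduction and is never supplied. Your sketched mechanism (a great circle joining two non-antipodal caps forcing a spacelike critical configuration of inadmissible index) is speculative and unverified; the paper's own remark indicates that the operative input in \cite{Min-cone} is instead the simple connectedness of $\Sph^n$ for $n\ge 2$ (\cite[Theorem~6]{Min-cone}), i.e.\ a global topological argument of a different nature. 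So, read as a citation your proposal coincides with the paper's treatment (in which case the preceding analysis is superfluous); read as a self-contained proof, it has a genuine gap at the one step that constitutes the theorem.
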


The key difference between $n+1=2$ and $n+1 \ge 3$ used in the proof is that
the sphere $\Sph^{n}$ is simply-connected if and only if $n \ge 2$ (see \cite[Theorem~6]{Min-cone}).
One may think of taking the product of $(\R^2,L)$ in Example~\ref{ex:Beem} and $\R$,
that is,
$\overline{L}(r,\theta,z):=r^2 \cos k\theta +z^2$.
This Lagrangian $\overline{L}$ is, however, twice differentiable at $(0,0,1)$ if and only if $k=2$.

\begin{remark}[Definitions of Lorentz--Finsler structures]\label{rm:2cone}
The analogue to Theorem~\ref{th:cone} in the non-reversible case is an open problem.
Nevertheless, it is in many cases acceptable to consider $L$ as defined just inside the future cone,
as in the approach by Asanov \cite{As}.
That is to say, we consider a smooth family of convex cones,
$\{\Omega_x\}_{x \in M}$ with $\Omega_x \subset T_xM \setminus \{0\}$,
and $L$ is defined only on $\bigcup_{x \in M} \overline{\Omega}_x$
such that $L<0$ on $\Omega_x$, $L=0$ on $\del \Omega_x$,
and having the Lorentzian signature
(studies of increasing functions for cone distributions can be found in \cite{FS,BS}
and their general causality theory is developed in \cite{Min-causality}).
In this case, under the natural assumption that $dL \neq 0$ on $\del\Omega_x$,
we can extend $L$ to $\wt{L}$ on the whole tangent bundle $TM$
such that the set of timelike vectors of $\wt{L}$ has exactly two connected components
in each tangent space (see \cite[Theorem~1]{Min-equiv}, $\wt{L}$ may not be reversible).
Therefore assuming that $L$ is globally defined as in Definition~\ref{df:FLstr} costs no generality.
Furthermore, in most arguments, given a (future-directed) timelike vector $v$,
we make use of $g_v$ from \eqref{eq:g_v} instead of $L$ itself.
\end{remark}



\subsection{Causality theory}\label{ssc:causal}

We recall some fundamental concepts in causality theory on a Lorentz--Finsler manifold $(M,L)$.
A continuous vector field $X$ on $M$ is said to be \emph{timelike}
if $L(X(x))<0$ for all $x \in M$.
If $(M,L)$ admits a timelike smooth vector field $X$,
then $(M,L)$ is said to be \emph{time oriented} by $X$, or simply \emph{time oriented}.
We will call a time oriented Lorentz--Finsler manifold a \emph{Finsler spacetime}.

A causal vector $v \in T_xM$ is said to be
\emph{future-directed} if it lies in the same connected component of
$\overline{\Omega'_x} \setminus \{0\}$ as $X(x)$.
We will denote by $\Omega_x \subset \Omega'_x$ the set of future-directed timelike vectors,
and set
\[ \Omega :=\bigcup_{x \in M} \Omega_x, \qquad
 \overline{\Omega} :=\bigcup_{x \in M} \overline{\Omega}_x, \qquad
 \overline{\Omega} \setminus \{0\} :=\bigcup_{x \in M} (\overline{\Omega}_x \setminus \{0\}). \]

A $C^1$-curve in $(M,L)$ is said to be timelike (resp.\ causal, lightlike, spacelike)
if its tangent vector is always timelike (resp.\ causal, lightlike, spacelike).
All causal curves will be future-directed in this article.
Given distinct points $x,y \in M$, we write $x \ll y$
if there is a future-directed timelike curve from $x$ to $y$.
Similarly, $x<y$ means that there is a future-directed causal curve from $x$ to $y$,
and $x \le y$ means that $x=y$ or $x<y$.

The \emph{chronological past}
and \emph{future} of $x$ are defined by
\[ I^-(x):=\{ y \in M \,|\, y \ll x\}, \qquad I^+(x):=\{ y \in M \,|\, x \ll y\}, \]
and the \emph{causal past} and \emph{future} are defined by
\[ J^-(x):=\{ y \in M \,|\, y \le x\}, \qquad J^+(x):=\{ y \in M \,|\, x \le y\}. \]
For a general set $S\subset M$, we define $I^-(S),I^+(S),J^-(S)$ and $J^+(S)$ analogously.


\begin{definition}[Causality conditions]\label{df:causal}
Let $(M,L)$ be a Finsler spacetime.
\begin{enumerate}[(1)]
\item $(M,L)$ is said to be \emph{chronological} if $x \notin I^+(x)$ for all $x\in M$.
\item We say that $(M,L)$ is \emph{causal} if there is no closed causal curve.
\item $(M,L)$ is said to be \emph{strongly causal} if, for all $x \in M$,
every neighborhood $U$ of $x$ contains another neighborhood $V$ of $x$
such that no causal curve intersects $V$ more than once.
\item We say that $(M,L)$ is \emph{globally hyperbolic}
if it is strongly causal and, for any $x,y \in M$, $J^+(x) \cap J^-(y)$ is compact.
\end{enumerate}
\end{definition}

Clearly strong causality implies causality, and a causal spacetime is chronological.
The chronological condition implies that the spacetime is non-compact.
The following concept plays an essential role
in the study of the geodesic incompleteness in general relativity.

\begin{definition}[Inextendibility]\label{df:inext} 
A future-directed causal curve $\eta:(a,b) \lra M$
is said to be \emph{future} (resp.\ \emph{past}) \emph{inextendible}
if $\eta(t)$ does not converge as $t \to b$ (resp.\ $t \to a$).
We say that $\eta$ is \emph{inextendible} if it is both future and past inextendible.
\end{definition}

Global hyperbolicity can be characterized in many ways.
Here we mention one of them in terms of Cauchy hypersurfaces
(see \cite[Proposition 6.12]{Min-Ray}, \cite[Theorem~1.3]{FS}).

\begin{definition}[Cauchy hypersurfaces]\label{df:Cauchy} 
A hypersurface $S \subset M$ is called a \emph{Cauchy hypersurface}
if every future-directed inextendible causal curve intersects $S$ exactly once.
\end{definition}

\begin{proposition}\label{pr:gl-hyp}
A Finsler spacetime $(M,L)$ is globally hyperbolic
if and only if it admits a smooth Cauchy hypersurface.
\end{proposition}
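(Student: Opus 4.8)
The plan is to treat this as a purely causal-theoretic statement, since both global hyperbolicity and the Cauchy property depend only on the cone distribution $x \mapsto \overline{\Omega}_x$ and not on the Finsler fundamental tensor $g_v$ or the curvature. As emphasized in Subsection~\ref{com}, such causality arguments transfer word-for-word from the Lorentzian to the Lorentz--Finsler setting; indeed the cones $\overline{\Omega}_x$ constitute a closed cone structure, to which the general theory of \cite{FS,Min-causality} applies. Accordingly, I would organize the proof as a reduction to the corresponding results for closed cone structures, as recorded in \cite[Proposition~6.12]{Min-Ray} and \cite[Theorem~1.3]{FS}, and verify that the two implications go through in our framework.

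For the easy implication, suppose $(M,L)$ admits a smooth Cauchy hypersurface $S$. First I would establish strong causality: a closed causal curve is inextendible and, being a loop, meets $S$ either zero times or at least twice, contradicting the Cauchy property; the stronger statement that strong causality holds follows by the standard limit curve argument valid for closed cone structures \cite{Min-causality}, extracting an inextendible causal curve from an almost-periodic one and deriving the same contradiction. Next, to show that $J^+(x) \cap J^-(y)$ is compact, I would again use the limit curve theorem: any sequence of points in this diamond lies on future-directed causal curves from $x$ to $y$, and these subconverge to a causal curve; the Cauchy property prevents the limit from escaping to infinity, which yields the desired compactness.

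For the converse, assuming global hyperbolicity, the strategy is the Geroch-type construction of a Cauchy time function. Fixing a finite Borel measure $m$ on $M$ with full support and setting $f^\pm(x) := m\bigl( J^\pm(x) \bigr)$, global hyperbolicity guarantees that $f^\pm$ are continuous and that $t(x) := \log\bigl( f^-(x)/f^+(x) \bigr)$ is strictly increasing along future-directed causal curves and tends to $\pm\infty$ along inextendible ones (since $f^+ \to 0$ along a future-inextendible curve). Each level set $t^{-1}(c)$ is then a topological Cauchy hypersurface. The main obstacle is to upgrade this to a \emph{smooth} spacelike Cauchy hypersurface: this is precisely the Bernal--Sánchez smoothing procedure, which in the present cone-structure framework is the content of \cite[Theorem~1.3]{FS}. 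I would therefore invoke that result to conclude, the delicate technical point being that the approximation must respect the (possibly non-reversible) Finsler cone distribution rather than a symmetric Lorentzian metric.
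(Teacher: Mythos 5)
Your proposal is correct and takes essentially the same approach as the paper: the paper offers no proof of Proposition~\ref{pr:gl-hyp}, simply citing \cite[Proposition~6.12]{Min-Ray} and \cite[Theorem~1.3]{FS}, which are precisely the results you reduce to (the causal-theoretic direction via limit curves, and Geroch's volume-function construction upgraded by the Fathi--Siconolfi smoothing for cone structures). Your sketches of the internal arguments are the standard content of those references and are sound.
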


\subsection{Geodesics}\label{ssc:geod}

Next we introduce some geometric concepts.
Define the \emph{Lorentz--Finsler length} of a piecewise $C^1$-causal curve
$\eta:[a,b] \lra M$ by (recall \eqref{eq:F} for the definition of $F$)
\[ \ell(\eta):=\int_a^b F\big( \dot{\eta}(t) \big) \,dt. \]
Then, for $x,y \in M$, we define the \emph{Lorentz--Finsler distance} $d(x,y)$ from $x$ to $y$ by
\[ d(x,y):=\sup_\eta \ell(\eta), \]
where $\eta$ runs over all piecewise $C^1$-causal curves from $x$ to $y$.
We set  $d(x,y):=0$  if there is no causal curve from $x$ to $y$.
We remark that, under the assumption of global hyperbolicity,
$d$ is finite and continuous (\cite[Proposition~6.8]{Min-Ray}).
A causal curve $\eta: I \lra M$ is said to be \emph{maximizing}
if, for every $t_1,t_2\in I$ with $t_1<t_2$, we have $d(\eta(t_1),\eta(t_2))= \ell(\eta|_{[t_1,t_2]})$.

The \emph{Euler--Lagrange equation} for the \emph{action}
$\mathcal{S}(\eta):=\int_a^b L(\dot{\eta}(t)) \,dt$ provides the \emph{geodesic equation}
\begin{equation}\label{eq:geod}
\ddot{\eta}^\alpha +\sum_{\beta,\gamma=0}^n
 \widetilde{\Gamma}^\alpha_{\beta \gamma}(\dot{\eta})
 \dot{\eta}^\beta \dot{\eta}^\gamma =0,
\end{equation}
where we define
\begin{equation}\label{eq:gamma}
\widetilde{\Gamma}^\alpha_{\beta \gamma}(v)
:=\frac{1}{2} \sum_{\delta=0}^n g^{\alpha \delta}(v)
 \bigg( \frac{\del g_{\delta \gamma}}{\del x^\beta} +\frac{\del g_{\beta \delta}}{\del x^\gamma}
 -\frac{\del g_{\beta \gamma}}{\del x^\delta} \bigg)(v)
\end{equation}
for $v \in TM \setminus \{0\}$ and
$(g^{\alpha \beta}(v))$ denotes the inverse matrix of $(g_{\alpha \beta}(v))$.

We say that a $C^{\infty}$-causal curve $\eta:[a,b] \lra \R$ is \emph{geodesic}
if \eqref{eq:geod} holds for all $t \in (a,b)$.
%
Since $L(\dot{\eta})$ is constant by \eqref{eq:geod}, a causal geodesic is indeed
either a timelike geodesic or a lightlike geodesic.
%
Given $v \in \overline{\Omega}_x$, if there is a geodesic $\eta:[0,1] \lra M$ with $\dot{\eta}(0)=v$,
then the \emph{exponential map} $\exp_x$ is defined by $\exp_x(v):=\eta(1)$.

Locally maximizing causal curves coincide with causal geodesics up to reparametrizations
(\cite[Theorem~6]{Min-conv}).
Under very weak differentiability assumptions on the metric,
this local maximization property can be used to define the notion of causal geodesics
(see \cite{Min-causality}).
We remark that, under Definition~\ref{df:FLstr}, due to a classical result by Whitehead,
the manifold admits convex neighborhoods.
Ultimately, this single fact makes it possible to work out much of causality theory
for Lorentz--Finsler manifolds in analogy with that for Lorentzian manifolds
(we refer to \cite{Min-conv,Min-Ray}).

\section{Covariant derivatives and curvatures}\label{sc:curv}

In this section, along the argument in \cite[Chapter~6]{Shlec} (see also \cite{Onlga})
in the positive-definite case,
we introduce covariant derivatives (associated with the Chern connection)
and Jacobi fields by analyzing the behavior of geodesics.
Then we define the flag and Ricci curvatures in the spacetime context.
We refer to \cite[Section~2]{Min-Ray} for a further account.

Similarly to the previous section, $(M,L)$ will denote a Finsler spacetime
and all causal curves and vectors are future-directed.
In this section, however, this is merely for simplicity and the time-orientability plays no role.
Everything is local and can be readily generalized
to general causal vectors and geodesics on Lorentz--Finsler manifolds.

\subsection{Covariant derivatives}\label{ssc:covd}

We first introduce the coefficients of the \emph{geodesic spray}
and the \emph{nonlinear connection} as
\[ G^\alpha(v) :=\frac{1}{2} \sum_{\beta,\gamma=0}^n
 \widetilde{\Gamma}^\alpha_{\beta \gamma}(v) v^\beta v^\gamma, \qquad
N^\alpha_\beta(v) := \frac{\del G^\alpha}{\del v^\beta}(v) \]
for $v \in TM \setminus \{0\}$, and $G^\alpha(0)=N^\alpha_\beta(0):=0$.
Note that $G^\alpha$ is positively $2$-homogeneous
and $N^\alpha_\beta$ is positively $1$-homogeneous,
and $2 G^\alpha(v) =\sum_{\beta=0}^n N^\alpha_\beta(v)v^\beta$ holds
by the homogeneous function theorem.
The geodesic equation \eqref{eq:geod} is now written as
$\ddot{\eta}^\alpha +2 G^\alpha(\dot{\eta})=0$.
In order to define covariant derivatives,
we need to modify $\widetilde{\Gamma}^\alpha_{\beta \gamma}$ in \eqref{eq:gamma} as
\[ \Gamma^\alpha_{\beta \gamma}(v)
 :=\widetilde{\Gamma}^\alpha_{\beta \gamma}(v) -\frac{1}{2} \sum_{\delta,\mu=0}^n
 g^{\alpha \delta}(v) \bigg( \frac{\del g_{\delta \gamma}}{\del v^\mu} N^\mu_\beta
  +\frac{\del g_{\beta \delta}}{\del v^\mu} N^\mu_\gamma
 -\frac{\del g_{\beta \gamma}}{\del v^\mu} N^\mu_\delta \bigg)(v) \]
for $v \in TM \setminus \{0\}$.
Notice that these formulas are the same as those in \cite{Shlec}
(while $G^i(v)$ in \cite{Onlga} corresponds to $2G^{\alpha}(v)$ in this article).

\begin{definition}[Covariant derivatives]\label{df:cov}
For a $C^1$-vector field $X$ on $M$, $x \in M$ and $v,w \in T_xM$ with $w \neq 0$,
we define the \emph{covariant derivative} of $X$ by $v$ with reference (support) vector $w$ by
\[ D^w_v X :=\sum_{\alpha ,\beta=0}^n \bigg\{ v^\beta \frac{\del X^\alpha}{\del x^\beta}(x)
 +\sum_{\gamma=0}^n \Gamma^\alpha_{\beta \gamma}(w) v^\beta X^\gamma(x) \bigg\}
 \frac{\del}{\del x^\alpha} \Big|_x. \]
\end{definition}

The reference vector will be usually chosen as $w=v$ or $w=X(x)$.
The following result is shown in the same way as \cite[Section~6.2]{Shlec}
(see also \cite[Lemma~2.3]{Osplit}).

\begin{proposition}[Riemannian characterization]\label{pr:covd}
If $V$ is a nowhere vanishing $C^{\infty}$-vector field such that
all integral curves of $V$ are geodesic, then we have
\[ D^V_V X =D^{g_V}_V X, \qquad D^V_X V =D^{g_V}_X V \]
for any differentiable vector field $X$, where $D^{g_V}$ denotes the covariant derivative
with respect to the Lorentzian structure $g_V$ induced from $V$ via \eqref{eq:g_v}.
\end{proposition}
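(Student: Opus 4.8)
The plan is to reduce both asserted equalities to a single scalar identity among connection coefficients and then extract it from the geodesic hypothesis together with the $0$-homogeneity of $g_{\alpha\beta}$. First I would note that the Chern coefficients $\Gamma^\alpha_{\beta\gamma}(v)$ are symmetric in the lower indices $\beta,\gamma$: the correction added to the symmetric $\widetilde{\Gamma}^\alpha_{\beta\gamma}$ is itself symmetric, as is seen by swapping $\beta\leftrightarrow\gamma$ and using $g_{\alpha\beta}=g_{\beta\alpha}$. Since the Levi--Civita symbols $\Gamma^{(V)\alpha}_{\beta\gamma}$ of the frozen Lorentzian metric $g_V$ (that is, $x\mapsto g_{V(x)}$) are symmetric as well, both claims follow from the single pointwise identity
\[ \sum_{\beta=0}^n \Gamma^\alpha_{\beta\gamma}(V)V^\beta = \sum_{\beta=0}^n \Gamma^{(V)\alpha}_{\beta\gamma}V^\beta, \qquad \alpha,\gamma=0,1,\ldots,n, \]
everything evaluated at $v=V(x)$. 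Indeed $D^V_V X$ contracts the differentiation index with $V$ and leaves $\gamma$ free (to meet $X^\gamma$), while $D^V_X V$ contracts $\gamma$ with $V$ and leaves $\beta$ free; by the index symmetry these are the same identity after relabeling.

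Next I would expand each side in terms of $\widetilde{\Gamma}^\alpha_{\beta\gamma}(V)$. On the left, substituting the definition of $\Gamma^\alpha_{\beta\gamma}$ produces, besides $\widetilde{\Gamma}^\alpha_{\beta\gamma}(V)V^\beta$, a correction built from the three pieces $\frac{\partial g_{\delta\gamma}}{\partial v^\mu}N^\mu_\beta$, $\frac{\partial g_{\beta\delta}}{\partial v^\mu}N^\mu_\gamma$, $\frac{\partial g_{\beta\gamma}}{\partial v^\mu}N^\mu_\delta$, each contracted with $V^\beta$. On the right, the Christoffel symbols of $g_V$ involve the $x$-derivatives of $x\mapsto g_{\delta\gamma}(V(x))$; the chain rule produces the same base term $\widetilde{\Gamma}^\alpha_{\beta\gamma}(V)V^\beta$ plus a correction of identical shape but with each $N^\mu_\bullet$ replaced by the corresponding $\frac{\partial V^\mu}{\partial x^\bullet}$. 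The task is thus to match the two correction packets after contraction with $V^\beta$.

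The two structural inputs are homogeneity facts. Since $g_{\alpha\beta}=\frac{\partial^2 L}{\partial v^\alpha\partial v^\beta}$ is positively $0$-homogeneous, the tensor $\frac{\partial g_{\alpha\beta}}{\partial v^\mu}=\frac{\partial^3 L}{\partial v^\alpha\partial v^\beta\partial v^\mu}$ is totally symmetric and, by Euler's theorem \cite[Theorem~1.2.1]{BCS}, satisfies $\sum_\beta \frac{\partial g_{\beta\delta}}{\partial v^\mu}(v)v^\beta=0$. This kills the second and third correction terms on both sides, since each carries an uncontracted $N^\mu_\gamma,N^\mu_\delta$ (resp.\ $\frac{\partial V^\mu}{\partial x^\gamma},\frac{\partial V^\mu}{\partial x^\delta}$) multiplying a factor of the vanishing form $\sum_\beta \frac{\partial g_{\beta\delta}}{\partial v^\mu}V^\beta$. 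Only the first term survives on each side, and here the geodesic hypothesis is decisive: on the Chern side $\sum_\beta N^\mu_\beta(V)V^\beta=2G^\mu(V)$ by the homogeneous function theorem, while on the $g_V$ side $\sum_\beta \frac{\partial V^\mu}{\partial x^\beta}V^\beta=\ddot{\eta}^\mu=-2G^\mu(V)$, where $\eta$ is the integral curve of $V$ through $x$ and $\ddot{\eta}^\mu+2G^\mu(\dot{\eta})=0$. Tracking the signs — the Chern correction enters with a minus and $+2G^\mu$, the chain-rule correction with a plus and $-2G^\mu$ — both surviving terms equal $-\sum_{\delta,\mu}g^{\alpha\delta}(V)\frac{\partial g_{\delta\gamma}}{\partial v^\mu}(V)G^\mu(V)$, and the displayed identity follows.

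I expect the main obstacle to be bookkeeping rather than conceptual: one must keep the reference vector frozen at $V$ throughout and carefully distinguish the two routes by which $v$-dependence enters, namely through the argument $V(x)$ of $g_V$ (treated by the chain rule) versus through the nonlinear connection $N^\mu_\beta$ in the Chern coefficients. The genuine crux is the observation that the geodesic condition is exactly what forces $\sum_\beta \frac{\partial V^\mu}{\partial x^\beta}V^\beta=-2G^\mu(V)$, so that this term matches $\sum_\beta N^\mu_\beta(V)V^\beta=2G^\mu(V)$; without the assumption that the integral curves of $V$ are geodesic, these contributions differ and the equalities fail.
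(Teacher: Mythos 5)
Your proof is correct and takes essentially the approach the paper intends: the paper gives no inline argument but delegates to the positive-definite case (\cite{Shlec}, Section~6.2, and \cite{Osplit}, Lemma~2.3), whose proofs consist of exactly this computation. Your three ingredients --- the symmetry of the lower indices reducing both equalities to one contracted identity, Euler's theorem together with the total symmetry of $\del g_{\alpha\beta}/\del v^\mu$ killing two of the three correction terms, and the geodesic equation matching $\sum_\beta N^\mu_\beta(V)V^\beta=2G^\mu(V)$ against $\sum_\beta (\del V^\mu/\del x^\beta)V^\beta=-2G^\mu(V)$ --- are precisely the standard ones, and the sign bookkeeping is right.
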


Along a $C^{\infty}$-curve $\eta$ with $\dot{\eta} \neq 0$,
one can consider the covariant derivative along $\eta$,
\[ D^{\dot{\eta}}_{\dot{\eta}} X(t) := \sum_{\alpha=0}^n \bigg( \dot{X}^\alpha
 +\sum_{\beta,\gamma=0}^n
 \Gamma^\alpha_{\beta \gamma}(\dot{\eta}) \dot{\eta}^\beta X^\gamma \bigg)(t)
 \,\frac{\del}{\del x^\alpha} \Big|_{\eta(t)}, \]
for vector fields $X$ along $\eta$,
where $X(t) =\sum_{\alpha=0}^n X^\alpha(t) (\del/\del x^\alpha)|_{\eta(t)}$.
Then the geodesic equation \eqref{eq:geod} coincides with $D^{\dot{\eta}}_{\dot{\eta}} \dot{\eta}=0$.

For a nonconstant causal geodesic $\eta$
and $C^{\infty}$-vector fields $X,Y$ along $\eta$, we have
\begin{equation}\label{eq:g_eta}
\frac{d}{dt}\big[ g_{\dot{\eta}}(X,Y) \big]
 =g_{\dot{\eta}}(D^{\dot{\eta}}_{\dot{\eta}} X,Y) +g_{\dot{\eta}}(X,D^{\dot{\eta}}_{\dot{\eta}}Y)
\end{equation}
(see, e.g., \cite[Exercise~5.2.3]{BCS}).
One also has, for nowhere vanishing $X$,
\begin{equation}\label{eq:g_X}
\frac{d}{dt}\big[ g_X(X,Y) \big] =g_X(D^X_{\dot{\eta}} X,Y) +g_X(X,D^X_{\dot{\eta}}Y)
\end{equation}
(see \cite[Exercise~10.1.2]{BCS}).

\subsection{Jacobi fields}\label{ssc:Jacobi}


Next we introduce Jacobi fields.
Let $\zeta:[a,b] \times (-\ve,\ve) \lra M$ be a $C^{\infty}$-map
such that $\zeta(\cdot,s)$ is a causal geodesic for each $s \in (-\ve,\ve)$.
Put $\eta(t):=\zeta(t,0)$ and consider the variational vector field $Y(t) :=\del \zeta/\del s(t,0)$.
Then we have
\begin{align*}
&D^{\dot{\eta}}_{\dot{\eta}} D^{\dot{\eta}}_{\dot{\eta}} Y \\
&= \sum_{\alpha,\beta=0}^n \bigg\{ {-}2\frac{\del G^\alpha}{\del x^\beta}(\dot{\eta})
 +\sum_{\gamma=0}^n \bigg( \frac{\del N^\alpha_\beta}{\del x^\gamma}(\dot{\eta}) \dot{\eta}^\gamma
 -2\frac{\del N^\alpha_\beta}{\del v^\gamma}(\dot{\eta}) G^\gamma(\dot{\eta})
 + N^\alpha_\gamma(\dot{\eta}) N^\gamma_\beta(\dot{\eta}) \bigg) \bigg\} Y^\beta
 \frac{\del}{\del x^\alpha}\Big|_{\eta}.
\end{align*}
Now, we define
\[ R^\alpha_\beta(v) :=2\frac{\del G^\alpha}{\del x^\beta}(v)
 -\sum_{\gamma=0}^n \bigg( \frac{\del N^\alpha_\beta}{\del x^\gamma}(v) v^\gamma
 -2\frac{\del N^\alpha_\beta}{\del v^\gamma}(v) G^\gamma(v) \bigg)
 -\sum_{\gamma=0}^n N^\alpha_\gamma(v) N^\gamma_\beta(v) \]
for $v \in \overline{\Omega}$ (note that $R^\alpha_\beta(0)=0$), and
\begin{equation}\label{eq:R_v}
R_v(w) :=\sum_{\alpha,\beta=0}^n R^\alpha_\beta(v) w^\beta \frac{\del}{\del x^\alpha}\Big|_x
\end{equation}
for $v \in \overline{\Omega}_x$ and $w \in T_xM$.
Then we arrive at the \emph{Jacobi equation}
\begin{equation}\label{eq:Jacobi}
D^{\dot{\eta}}_{\dot{\eta}} D^{\dot{\eta}}_{\dot{\eta}} Y +R_{\dot{\eta}}(Y) =0.
\end{equation}

\begin{definition}[Jacobi fields]\label{df:Jacobi}
A solution $Y$ to \eqref{eq:Jacobi} is called a \emph{Jacobi field} along a causal geodesic $\eta$.
\end{definition}

We recall two important properties of $R_v$,
see \cite[Proposition~2.4]{Min-Ray} for a detailed account.

\begin{proposition}[Properties of $R_v$]\label{pr:R_v} 
\begin{enumerate}[{\rm (i)}]
\item
We have $R_v(v)=0$ for every $v \in \overline{\Omega}_x$.
\item
$R_v$ is symmetric in the sense that
\begin{equation}\label{eq:symm}
g_v\big( w_1,R_v(w_2) \big) =g_v\big( R_v(w_1),w_2 \big)
\end{equation}
holds for all $v \in \overline{\Omega}_x \setminus \{0\}$ and $w_1,w_2 \in T_xM$.
\end{enumerate}
\end{proposition}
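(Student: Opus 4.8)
The plan is to treat the two assertions separately: (i) is a direct consequence of the positive homogeneity of $G^\alpha$ and $N^\alpha_\beta$ together with Euler's homogeneous function theorem, while (ii) is best reduced to the familiar symmetries of the Riemann curvature tensor of the osculating Lorentzian metric $g_v$ by means of Proposition~\ref{pr:covd}.

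For (i), I would simply compute $\sum_\beta R^\alpha_\beta(v) v^\beta$ and show that it vanishes, grouping the four terms in the definition of $R^\alpha_\beta$ into two cancelling pairs. Differentiating the Euler identity $\sum_\beta N^\alpha_\beta(v) v^\beta = 2G^\alpha(v)$ with respect to $x^\gamma$ (at fixed fiber coordinate) gives $\sum_\beta (\del N^\alpha_\beta/\del x^\gamma)(v) v^\beta = 2(\del G^\alpha/\del x^\gamma)(v)$; contracting this against $v^\gamma$ shows that, after multiplication by $v^\beta$ and summation, the term $2(\del G^\alpha/\del x^\beta)(v)$ cancels $-\sum_\gamma (\del N^\alpha_\beta/\del x^\gamma)(v) v^\gamma$. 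For the remaining two terms, since $N^\alpha_\beta = \del G^\alpha/\del v^\beta$ is symmetric in its differentiation indices and positively $1$-homogeneous, Euler's theorem yields $\sum_\beta (\del N^\alpha_\beta/\del v^\gamma)(v) v^\beta = \sum_\beta (\del N^\alpha_\gamma/\del v^\beta)(v) v^\beta = N^\alpha_\gamma(v)$; combined with $\sum_\beta N^\gamma_\beta(v) v^\beta = 2G^\gamma(v)$, the terms $+2\sum_\gamma (\del N^\alpha_\beta/\del v^\gamma)(v) G^\gamma(v) v^\beta$ and $-\sum_\gamma N^\alpha_\gamma(v) N^\gamma_\beta(v) v^\beta$ also cancel. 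Hence $R_v(v)=0$; continuity and $R_0=0$ extend this to all of $\overline{\Omega}_x$.

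For (ii), fix $v \in \overline{\Omega}_x \setminus \{0\}$ and extend it to a nowhere-vanishing $C^\infty$ geodesic vector field $V$ near $x$ (e.g.\ choose a hypersurface through $x$ transverse to $v$, prescribe a field extending $v$ along it, and let $V$ be the velocity field of the geodesics emanating from that hypersurface). By Proposition~\ref{pr:covd}, along the integral curves of $V$ the operators $D^V_V$ and $D^V_\cdot V$ coincide with those of the Lorentzian metric $g_V$; comparing the Finsler Jacobi equation \eqref{eq:Jacobi} with the Jacobi equation of $g_V$ then identifies, at $x$, the Finsler curvature operator with the tidal operator of $g_V$,
\[ R_v(w) = R^{g_V}(w, v)v \qquad \text{for all } w \in T_xM, \]
where $R^{g_V}$ is the Riemann curvature tensor of $g_V$ and $g_V|_x = g_v$; this is consistent with $R_v$ depending on $v$ alone, since the right-hand side is independent of the chosen extension. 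The claim \eqref{eq:symm} is now the self-adjointness of $w \mapsto R^{g_V}(w,v)v$: writing $R(X,Y,Z,W) := g_V(R^{g_V}(X,Y)Z, W)$ and combining the antisymmetry in the first and in the last pair of slots with the pair symmetry $R(X,Y,Z,W) = R(Z,W,X,Y)$, one obtains $R(w_1, v, v, w_2) = R(w_2, v, v, w_1)$, which is precisely $g_v(R_v(w_1), w_2) = g_v(w_1, R_v(w_2))$.

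The computational content of (i) is routine once the homogeneity identities are recorded, so I expect no genuine difficulty there. The crux of (ii) is the identification $R_v(w) = R^{g_V}(w,v)v$: I would need to verify carefully that the nonlinear-connection corrections distinguishing $\Gamma^\alpha_{\beta\gamma}$ from $\widetilde{\Gamma}^\alpha_{\beta\gamma}$, and the fact that $g_V$ depends on the full extension $V$ rather than on $v$ alone, do not alter the curvature operator evaluated at $x$. This is exactly what Proposition~\ref{pr:covd} guarantees along the geodesic generated by $V$, and it is the step where the specifically Finslerian features enter; a detailed account along these lines appears in \cite[Proposition~2.4]{Min-Ray}.
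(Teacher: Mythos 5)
Your proposal is correct, but it cannot be ``the same as the paper's proof'' for the simple reason that the paper gives none: Proposition~\ref{pr:R_v} is recalled there with a pointer to \cite[Proposition~2.4]{Min-Ray}, so your argument should be judged as a self-contained reconstruction, and as such it holds up. Part (i) checks out exactly as you describe: differentiating the Euler identity $\sum_\beta N^\alpha_\beta(v)v^\beta=2G^\alpha(v)$ in $x^\gamma$ cancels the pair of $x$-derivative terms, while the symmetry $\del N^\alpha_\beta/\del v^\gamma=\del N^\alpha_\gamma/\del v^\beta$ and Euler's theorem for the $1$-homogeneous $N^\alpha_\gamma$ cancel the remaining pair, and $v=0$ is covered by the convention $R^\alpha_\beta(0)=0$. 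Part (ii) is also sound, and it is worth noting that your route is precisely the technique the paper itself deploys a few lines later in Theorem~\ref{th:curv}: extend $v$ to a nowhere vanishing geodesic field $V$, use Proposition~\ref{pr:covd} to conclude that a variation through integral curves of $V$ is simultaneously a geodesic variation for $L$ and for $g_V$, and read off $R_v(w)=R^{g_V}(w,v)v$ from the two Jacobi equations; the symmetry \eqref{eq:symm} is then the classical self-adjointness of the tidal operator of a (pseudo-)Riemannian metric. There is no circularity in this, since Proposition~\ref{pr:covd} and the derivation of \eqref{eq:Jacobi} precede Proposition~\ref{pr:R_v} and do not use it. Two points merit explicit care in a final write-up: first, Theorem~\ref{th:curv} is stated only for timelike $v$, whereas \eqref{eq:symm} concerns all $v\in\overline{\Omega}_x\setminus\{0\}$, so for lightlike $v$ you should say that Proposition~\ref{pr:covd} carries no causality restriction and that the derivation of \eqref{eq:Jacobi} uses only the geodesic equation (the spray coefficients being defined on all of $TM\setminus\{0\}$), since the integral curves of $V$ near a lightlike $v$ need not be causal; second, the osculating metric $g_V$ depends on the extension, but the identity $R_v(w)=R^{g_V}(w,v)v$ only ever gets used at the single point $x$, where $g_V|_x=g_v$, so no well-definedness issue arises. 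Compared with the computational proof in the cited reference, your reduction buys conceptual clarity and reuses the paper's own machinery at the cost of the auxiliary construction of $V$; the direct index computation avoids that construction but is considerably messier for the symmetry claim.
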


Along a nonconstant causal geodesic $\eta:[a,b] \lra M$,
if there is a nontrivial Jacobi field $Y$ such that $Y(a) =Y(t) =0$ for some $t \in (a,b]$,
then we call $\eta(t)$ a \emph{conjugate point} of $\eta(a)$ along $\eta$.
The existence of conjugate points is a key issue throughout this article.

\subsection{Curvatures}\label{ssc:curv}

The flag and Ricci curvatures are defined by using $R_v$ in \eqref{eq:R_v} as follows.
The flag curvature corresponds to the sectional curvature in the Riemannian or Lorentzian context.

\begin{definition}[Flag curvature]\label{df:curv}
For $v \in {\Omega_x} $ and $w \in T_xM$ linearly independent of $v$,
define the \emph{flag curvature} of the $2$-plane $v \wedge w$ (a \emph{flag}) spanned by $v,w$
with \emph{flagpole} $v$ as
\begin{equation}\label{eq:flag}
\bK(v,w) :=-\frac{g_v(R_v(w),w)}{g_v(v,v) g_v(w,w) -g_v(v,w)^2}.
\end{equation}
\end{definition}

We remark that this is the opposite sign to \cite{BEE}, while the Ricci curvature will be the same.
The flag curvature $\bK(v,w)$ depends only on the $2$-plane $v \wedge w$
and the choice of the flagpole $\mathbb{R}_+v$ in it.

Note that, for $v$ timelike, the denominator in the right-hand side of \eqref{eq:flag} is negative.
The flag curvature is not defined for $v$ lightlike, for in this case the denominator could vanish.
Thus we define the Ricci curvature directly as the trace of $R_v$ in \eqref{eq:R_v}.

\begin{definition}[Ricci curvature]\label{df:Ric}
For $v \in \overline{\Omega}_x \setminus \{0\}$,
the \emph{Ricci curvature} or \emph{Ricci scalar} is defined as the trace of $R_v$,
i.e., $\Ric(v) :=\trace(R_{v})$.
\end{definition}

Since $\Ric(v)$ is positively $2$-homogeneous,
we can set $\Ric(0):=0$ by continuity.
We say that $\Ric \ge K$ holds \emph{in timelike directions} for some $K \in \R$
if we have $\Ric(v) \ge KF(v)^2 =-2KL(v)$ for all $v \in \Omega$.
For $v$ lightlike, since $L(v)=0$, only the nonnegative curvature condition $\Ric(v) \ge 0$
makes sense.

For a normalized timelike vector $v \in \Omega_x$ with $F(v)=1$,
$\Ric(v)$ can be given as $\Ric(v) =\sum_{i=1}^{n} \bK(v,e_i)$,
where $ \{v\} \cup \{e_i\}_{i=1}^{n}$ is an orthonormal basis with respect to $g_v$,
i.e., $g_v(e_i,e_j)=\delta_{ij}$ and $g_v(v,e_i)=0$ for all $i,j=1,\ldots,n$.

We deduce from Proposition~\ref{pr:covd} the following important feature of the Finsler curvature.
This is one of the main driving forces behind the recent developments
of comparison geometry and geometric analysis on Finsler manifolds
(see \cite{Shlec,Oint,Onlga}).

\begin{theorem}[Riemannian characterizations]\label{th:curv} 
Given a timelike vector $v \in \Omega_x$,
take a $C^1$-vector field $V$ on a neighborhood of $x$
such that $V(x)=v$ and every integral curve of $V$ is geodesic.
Then, for any $w \in T_xM$ linearly independent of $v$,
the flag curvature $\bK(v,w)$ coincides with the sectional curvature of
$v \wedge w$ for the Lorentzian metric $g_V$.
Similarly, the Ricci curvature $\Ric(v)$ coincides with the Ricci curvature of $v$ for $g_V$.
\end{theorem}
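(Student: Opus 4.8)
The plan is to reduce the Finsler flag and Ricci curvatures to genuine Lorentzian curvatures by exploiting the key structural fact recorded in Proposition~\ref{pr:covd}: along geodesic integral curves of $V$, the Finsler covariant derivatives $D^V_V$ and $D^V_{\cdot}V$ agree with the Lorentzian covariant derivatives of the metric $g_V$. Since the curvature tensor $R_v$ is constructed entirely from the behavior of geodesics (via the Jacobi equation~\eqref{eq:Jacobi}), and the flag and Ricci curvatures are in turn defined from $R_v$, the strategy is to show that $R_v$ acting on the relevant vectors coincides with the Lorentzian curvature operator of $g_V$ at $x$.

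First I would recall that the Jacobi equation~\eqref{eq:Jacobi} characterizes $R_{\dot\eta}$ through second covariant derivatives of variational vector fields along the geodesic $\eta$ with $\dot\eta(0)=v$. Choosing $\eta$ to be the integral curve of $V$ through $x$, Proposition~\ref{pr:covd} lets me replace each Finsler covariant derivative $D^{\dot\eta}_{\dot\eta}$ by the $g_V$-covariant derivative $D^{g_V}_{\dot\eta}$ when acting along this geodesic. Consequently, the operator $R_v$ satisfies the same Jacobi equation as the Lorentzian Jacobi operator $w \mapsto R^{g_V}(w,v)v$, where $R^{g_V}$ is the Riemann curvature tensor of $g_V$. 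Because both operators are determined by this second-order equation together with their value at the base point, I would conclude $R_v(w) = R^{g_V}(w,v)v$ for all $w \in T_xM$.

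Once this identification of operators is in hand, the curvature statements follow by unwinding the definitions. For the flag curvature, I substitute $R_v(w)=R^{g_V}(w,v)v$ into the defining formula~\eqref{eq:flag}; since $g_v = g_V$ at $x$ by construction (both are the metric~\eqref{eq:g_v} induced by the vector $v=V(x)$), the right-hand side of~\eqref{eq:flag} is exactly the Lorentzian sectional curvature of the plane $v \wedge w$, up to the sign convention already flagged in the text. For the Ricci curvature, I take the trace: $\Ric(v)=\trace(R_v)=\trace\big(w \mapsto R^{g_V}(w,v)v\big)$, which is precisely the Lorentzian Ricci curvature $\Ric^{g_V}(v,v)$ evaluated in the direction $v$.

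The main obstacle will be making the passage from Proposition~\ref{pr:covd}, which controls only the first covariant derivatives along geodesics of $V$, to a statement about the second-derivative curvature operator $R_v$. The subtlety is that $R_v$ depends on $x$-derivatives of the connection coefficients, not merely on their values at a point, so I must verify that the reference vector $w=V$ can be consistently used throughout the relevant differentiations and that the extra nonlinear-connection terms entering $R^\alpha_\beta(v)$ reorganize into the Lorentzian curvature of $g_V$. I would handle this by carrying out the computation intrinsically through the Jacobi equation rather than in coordinates: since a variation of $V$-geodesics is simultaneously a variation of $g_V$-geodesics (again by Proposition~\ref{pr:covd}), the variational vector field $Y$ solves both Jacobi equations, and comparing the two forces the operator identity. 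This geometric route sidesteps the delicate bookkeeping of the $\Gamma$, $N$, and $G$ terms in the explicit formula for $R^\alpha_\beta$.
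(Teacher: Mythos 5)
Your proposal is correct and takes essentially the same approach as the paper's proof: you build a geodesic variation through integral curves of $V$, use Proposition~\ref{pr:covd} to conclude that the variational vector field is a Jacobi field for both $L$ and $g_V$ with equal second covariant derivatives at $x$, and then read off the flag curvature identity from \eqref{eq:flag} (noting $g_v=g_V$ at $x$) and the Ricci identity by taking the trace. In particular, the intrinsic route through the Jacobi equation that you describe in your final paragraph---sidestepping the coordinate bookkeeping for $R^\alpha_\beta$---is exactly how the paper argues.
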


\begin{proof}
Let $\eta:(-\delta,\delta) \lra M$ be the geodesic with $\dot{\eta}(0)=v$
and observe that $V(\eta(t))=\dot{\eta}(t)$ by the condition imposed on $V$.
Take a $C^{\infty}$-variation $\zeta:(-\delta,\delta) \times (-\ve,\ve) \lra M$
of $\eta$ such that $\del_s \zeta(0,0)=w$
and that each $\zeta(\cdot,s)$ is an integral curve of $V$.
Then by the hypothesis, $\zeta(\cdot,s)$ is geodesic for all $s$
and hence $Y(t):=\del_s \zeta(t,0)$ is a Jacobi field along $\eta$.
Hence we deduce from the Jacobi equation \eqref{eq:Jacobi} that
\[ \bK(v,w)
 =-\frac{g_v(R_v(w),w)}{g_v(v,v)g_v(w,w)-g_v(v,w)^2}
 =\frac{g_v(D_{\dot{\eta}}^{\dot{\eta}} D_{\dot{\eta}}^{\dot{\eta}} Y(0),w)}{g_v(v,v)g_v(w,w)-g_v(v,w)^2}. \]

Now we compare this observation with the Lorentzian counterpart for $g_V$.
Since $\zeta$ is also a geodesic variation for $g_V$ (by Proposition~\ref{pr:covd}),
$Y$ is a Jacobi field also for $g_V$.
Moreover, it follows from Proposition~\ref{pr:covd} that
$D_{\dot{\eta}}^{\dot{\eta}} D_{\dot{\eta}}^{\dot{\eta}} Y(0)
 =D_{\dot{\eta}}^{g_V} D_{\dot{\eta}}^{g_V} Y(0)$.
This shows the first assertion, and the second assertion is obtained by taking the trace.
$\qedd$
\end{proof}

This observation is particularly helpful when we consider comparison theorems,
see Subsection~\ref{ssc:w-comp} for some instances.

\section{Weighted Ricci curvature}\label{sc:wRic}

In this section we introduce the main ingredient of our results,
the \emph{weighted Ricci curvature},
for a triple $(M,L,\psi)$ where $(M,L)$ is a Finsler spacetime
and $\psi:\overline{\Omega} \setminus \{0\} \lra \R$
is a \emph{weight function} which is $C^{\infty}$ and positively $0$-homogeneous,
i.e., $\psi(cv)=\psi(v)$ for all $c>0$.

Let $\pi :\overline{\Omega} \setminus \{0\} \lra M$ be the bundle of causal vectors.
The function $\psi$ can be used to define a section of the pullback bundle
$\pi^*[\bigwedge^{n+1}(T^*M)] \lra \overline{\Omega} \setminus \{0\}$ as
\[ \Phi(x,v) \,dx^0 \wedge dx^1 \wedge \cdots \wedge dx^n, \qquad
 \Phi(x,v):=\e^{-\psi(v)}  \sqrt{-{\det}\big[ \big( g_{\alpha \beta}(v) \big)_{\alpha,\beta=0}^n \big]}, \]
provided that $M$ is orientable.
In other words, we can consider a similar formula (even when $M$ is not orientable) as follows:
For every causal vector field $V$ on $M$,
\[ \fm_V(dx) :=\Phi\big( x,V(x) \big)\,dx^0 dx^1 \cdots dx^n
 = \e^{-\psi(V(x))} \,{\vol}_{g_V}(dx) \]
defines a measure $\fm_V$ on $M$,
where $\vol_{g_V}$ is the volume measure induced from $g_V$.

This structure $(M,L,\psi)$ generalizes that of a \emph{Lorentz--Finsler measure space},
which means a triple $(M,L,\fm)$ where $\fm$ is a positive $C^{\infty}$-measure on $M$
in the sense that, in each local coordinates $(x^\alpha)_{\alpha=0}^n$,
$\fm$ is written as $\fm(dx) =\Phi(x) \,dx^0 dx^1 \cdots dx^n$
(see \cite{Oint} for the positive-definite case).
In this setting the function $\psi$ is defined so as to satisfy,
for $v \in \overline{\Omega}_x \setminus \{0\}$,
\[ \Phi(x)= \e^{-\psi(v)} \sqrt{-{\det}\big[ \big( g_{\alpha \beta}(v) \big)_{\alpha,\beta=0}^n \big]}. \]
Notice that $g_{\alpha\beta}(v)$ depends on the direction $v$ in the Lorentz--Finsler (or Finsler) case.
This is the reason why we consider a function on $\overline{\Omega} \setminus \{0\}$,
instead of a function on $M$ as in the Lorentzian case.
Our approach here, considering a general function $\psi$ not necessarily induced from a measure,
represents a further generalization which allows us to identify the \emph{unweighted case}:
We shall say that we are in the unweighted case if $\psi$ is constant.
(There may not exist any measure such that $\psi$ is constant,
see \cite{ORand} for a related study in the positive-definite case.)
Since all the following calculations involve only the derivatives of $\psi$,
we can regard the choice $\psi=0$ as the only unweighted case.

We need to modify $\Ric(v)$ defined in Definition~\ref{df:Ric} according to the choice of $\psi$,
so as to generalize the definition of \cite{Oint}  for  the Finsler measure space case.
As a matter of notation, given a causal geodesic $\eta(t)$ we shall write
\begin{equation}\label{eq:psi_eta}
\psi_\eta(t) :=\psi \big( \dot\eta(t) \big).
\end{equation}

\begin{definition}[Weighted Ricci curvature]\label{def:RicN} 
On $(M,L, \psi)$ with $\dim M=n+1$,
given a nonzero causal vector $v \in \overline{\Omega}_x \setminus \{0\}$,
let $\eta:(-\ve,\ve) \lra M$ be the geodesic with $\dot{\eta}(0)=v$.
Then, for $N \in \R \setminus \{n\}$,
we define the \emph{weighted Ricci curvature} by
\begin{equation}\label{eq:Ric_N}
\Ric_N(v) :=\Ric(v) +\psi_\eta''(0) -\frac{\psi_\eta'(0)^2}{N-n}.
\end{equation}
As the limits of $N \to +\infty$ and $N \downarrow n$, we also define
\[
 \Ric_\infty(v) :=\Ric(v)+  \psi_\eta''(0) , \qquad
 \Ric_n(v) :=\begin{cases}
 \Ric(v)+    \psi_\eta''(0)  & \textrm{ if }\psi_\eta'(0)=0,\\
 -\infty & \textrm{ if }\psi_\eta'(0) \neq 0. 
 \end{cases}
\]
\end{definition}

\begin{remark}\label{rm:n+1}
Because of our notation $\dim M=n+1$,
$\Ric_N$ in this article corresponds to $\Ric_{N+1}$ in \cite{Oint,Onlga}
or $\Ric_f^{N+1}$ in \cite{WW1,WW2}.
\end{remark}

Similarly to Definition~\ref{df:Ric},
we say that $\Ric_N \ge K$ holds in timelike directions for some $K \in \R$
if we have $\Ric_N(v) \ge KF(v)^2$ for all $v \in \Omega$,
and $\Ric_N \ge 0$ in null directions means that $\Ric_N(v) \ge 0$
for all lightlike vectors $v$.

The weighted Ricci curvature $\Ric_N$ is also called the \emph{Bakry--\'Emery--Ricci curvature},
due to the pioneering work by Bakry--\'Emery \cite{BE} in the Riemannian situation
(we refer to the book \cite{BGL} for further information).
The Finsler version was introduced in \cite{Oint} as we mentioned,
and we refer to \cite{Ca} for the case of Lorentzian manifolds.

\begin{remark}[Remarks on $\Ric_N$]\label{rm:RicN} 
\begin{enumerate}[(a)]
\item
In the unweighted case, we have $ \Ric_N(v)=\Ric(v)$ for every $N\in (-\infty,+\infty]$.
In general, it is clear by definition that $\Ric_N$ is monotone non-decreasing
in the ranges $[n,+\infty]$ and $(-\infty,n)$, and we have
\begin{equation}\label{eq:mono}
\Ric_n(v) \le \Ric_N(v) \le \Ric_{\infty}(v) \le \Ric_{N'}(v)
\end{equation}
for any $N \in (n,+\infty)$ and $N'\in(-\infty,n)$.

\item
The study of the case where $N \in (-\infty,n)$ is rather recent.
The above monotonicity in $N$ implies that $\Ric_N \ge K$ with $N<n$
is a weaker condition than $\Ric_{\infty} \ge K$.
Nevertheless, one can generalize a number of results to this setting,
see \cite{KM,Mil-neg,Oneg,Wy} for the positive-definite case
and \cite{WW1,WW2} for the Lorentzian case.

\item
The Riemannian characterization as in Theorem~\ref{th:curv} is valid
also for the weighted Ricci curvature.
Take a $C^1$-vector field $V$ such that $V(x)=v$
and all integral curves of $V$ are geodesic.
Then $V$ induces the metric $g_V$ and the weight function $\psi_V:=\psi(V)$
on a neighborhood of $x$,
thus we can calculate the weighted Ricci curvature $\Ric_N^{(g_V,\psi_V)}(v)$
for $(M,g_V, \psi_V)$.
Since $\eta$ is geodesic also for $g_V$ and $\dot{\eta}(t)=V(\eta(t))$ by construction,
we deduce that $\Ric_N(v)$ in \eqref{eq:Ric_N} coincides with the Lorentzian counterpart:
\[
\Ric_N^{(g_V,\psi_V)}(v) =\Ric^{g_V}(v) +(\psi_V \circ \eta)''(0) -\frac{(\psi_V \circ \eta)'(0)^2}{N-n}.
\]
\end{enumerate}
\end{remark}

\section{Weighted Raychaudhuri equation}\label{sc:Ray}

Next we consider the \emph{Raychaudhuri equation} on weighted Finsler spacetimes.
In the unweighted case, the Finsler Raychaudhuri equation was established in \cite{Min-Ray}
along with corresponding singularity theorems.
Our approach is inspired by \cite{Ca} on the weighted Lorentzian setting.
(A counterpart to the Raychaudhuri equation in the positive-definite case
is the Bochner--Weitzenb\"ock formula; for that we refer to \cite{OSbw} in the Finsler context.)

\subsection{Weighted Jacobi and Riccati equations}\label{ssc:w-Ricc}

We begin with the notion of Jacobi and Lagrange tensor fields.
We say that a timelike geodesic $\eta$ has \emph{unit speed}
if $F(\dot{\eta}) \equiv 1$ ($L(\dot{\eta}) \equiv -1/2$).
For simplicity, the covariant derivative of a vector field $X$ along $\eta$
will be denoted by $X'$. Observe that this time-differentiation by acting linearly passes to the tensor bundle over $\eta$
and in particular to endomorphisms $E$ as $E'(P):=E(P)'-E(P')$.
We denote by $N_{\eta}(t) \subset T_{\eta(t)}M$
the $n$-dimensional subspace $g_{\dot{\eta}(t)}$-orthogonal to $\dot\eta(t)$.

\begin{definition}[Jacobi, Lagrange tensor fields]\label{df:Jtensor}
Let $\eta: I \lra M$ be a timelike geodesic of unit speed.
\begin{enumerate}[(1)]
\item
A smooth tensor field $\sJ$, giving an endomorphism
$\sJ(t):N_{\eta}(t) \lra N_{\eta}(t)$ for each $t \in I$,
is called a \emph{Jacobi tensor field along $\eta$} if we have
\begin{equation}\label{eq:Jtensor}
\sJ''+\sR \sJ=0
\end{equation}
and $\ker(\sJ(t)) \cap \ker(\sJ'(t)) =\{0\}$ holds for all $t \in I$,
where $\sR(t):=R_{\dot\eta(t)}:N_{\eta}(t) \lra N_{\eta}(t)$ is the curvature endomorphism.

\item
A Jacobi tensor field $\sJ$ is called a \emph{Lagrange tensor field} if
\begin{equation}\label{eq:Ltensor}
(\sJ')^{\sT} \sJ -\sJ^{\sT} \sJ'=0
\end{equation}
holds on $I$, where the transpose $\sT$ is taken with respect to $g_{\dot \eta}$.
\end{enumerate}
\end{definition}

For $t \in I$ where $\sJ(t)$ is invertible, note that \eqref{eq:Ltensor} is
equivalent to the $g_{\dot\eta}$-symmetry of $\sJ'\sJ^{-1}$.
At those points we define $\sB:= \sJ'\sJ^{-1}$.
Then, multiplying \eqref{eq:Jtensor} by $\sJ^{-1}$ from right, we arrive at the \emph{Riccati equation}
\begin{equation}\label{eq:Ricca}
\sB' +\sB^2 +\sR=0.
\end{equation}
For thoroughness, let us explain the precise meaning of \eqref{eq:Jtensor}
and \eqref{eq:Ltensor}.

\begin{remark}\label{rm:Jtensor}
\begin{enumerate}[(a)]
\item
The equation \eqref{eq:Jtensor} means that, for any $g_{\dot{\eta}}$-parallel vector field
$P(t) \in N_{\eta}(t)$ along $\eta$ (namely $P' \equiv 0$),
$Y(t):=\sJ(t)(P(t))$ is a Jacobi field along $\eta$ such that $g_{\dot{\eta}}(Y,\dot \eta)=0$.
Then we find from \eqref{eq:g_eta} that $g_{\dot{\eta}}(Y',\dot \eta)=0$.
Thus we have $\sJ':N_{\eta}(t) \lra N_{\eta}(t)$ and $\sB:N_{\eta}(t) \lra N_{\eta}(t)$.

\item
Proposition~\ref{pr:R_v} ensures $R_{\dot{\eta}(t)}(v) \in N_{\eta}(t)$ for all $v \in T_{\eta(t)}M$.
The $g_{\dot\eta}$-symmetry in \eqref{eq:Ltensor} means that,
given two $g_{\dot{\eta}}$-parallel vector fields $P_1(t),P_2(t) \in N_{\eta}(t)$ along $\eta$,
the Jacobi fields $Y_i:=\sJ(P_i)$ satisfy
\begin{equation}\label{eq:Ltensor'}
g_{\dot{\eta}}(Y'_1,Y_2) -g_{\dot{\eta}}(Y_1,Y'_2) =0
\end{equation}
on $I$.
Since \eqref{eq:Jtensor} and \eqref{eq:symm} imply
$[g_{\dot{\eta}}(Y'_1,Y_2) -g_{\dot{\eta}}(Y_1,Y'_2)]' \equiv 0$,
we have \eqref{eq:Ltensor'} for all $t$ if it holds at some $t$. 
\end{enumerate}
\end{remark}

We introduce fundamental quantities in the analysis of Jacobi tensor fields
along the Lorentz--Finsler treatment of \cite{Min-Ray}.

\begin{definition}[Expansion, shear tensor]\label{df:expan}
Let $\sJ$ be a Jacobi tensor field along a timelike geodesic $\eta:I \lra M$ of unit speed.
For $t \in I$ where $\sJ(t)$ is invertible, we define 
the \emph{expansion scalar} by
\[ \theta(t) :=\trace \big( \sB(t) \big) \]
and the \emph{shear tensor} (the traceless part of $\sB$) by
\[ \sigma(t) :=\sB(t) -\frac{\theta(t)}{n}I_{n}, \]
where $I_{n}$ represents the identity of $N_{\eta}(t)$.
\end{definition}

We proceed to the weighted situation.
Recall that $\psi$ is a function on $\overline{\Omega} \setminus \{0\}$
and, along a causal geodesic $\eta$, we set $\psi_{\eta}:=\psi(\dot{\eta})$ (see \eqref{eq:psi_eta}).
For a Jacobi tensor field $\sJ$ along a timelike geodesic $\eta:I \lra M$ of unit speed,
define the {\em weighted Jacobi endomorphism} by
\begin{equation}\label{eq:J_psi}
\sJ_\psi(t):=\e^{-\psi_\eta(t)/n} \sJ(t).
\end{equation}
Now we introduce an auxiliary time, the \emph{$\epsilon$-proper time}, defined by
\begin{equation} \label{par}
\tau_\epsilon:=\int \e^{\frac{2(\epsilon-1)}{n}\psi_\eta(t)} \,dt,
\end{equation}
where $t$ is the usual proper time parametrization.
Notice that $\tau_\epsilon$ coincides with the usual proper time for $\epsilon=1$,
and the case of $\epsilon=0$ was introduced in \cite{WW1}.
For brevity the (covariant) derivative in $\tau_\epsilon$ will be denoted by $*$.
For instance,
\[ \eta^*(t) :=\frac{d[\eta \circ \tau_{\epsilon}^{-1}]}{d\tau_{\epsilon}}\big( \tau_{\epsilon}(t) \big)
 =\e^{\frac{2(1-\epsilon)}{n}\psi_{\eta}(t)} \dot{\eta}(t). \]
Let us also introduce a weighted counterpart to the \emph{curvature endomorphism}:
\begin{equation}\label{mfi}
\sR_{(N,\epsilon)}(t) := \e^{\frac{4(1-\epsilon)}{n} \psi_\eta(t)}
 \left\{\sR(t)+\frac{1}{n}\bigg(\psi_\eta''(t)-\frac{\psi'_{\eta}(t)^2}{N-n}\bigg) I_{n}\right\}
\end{equation}
for $N \neq n$ (compare this with $R_f(t)$ in \cite[Definition~2.7]{Ca}).
This expression is chosen in such a way that
\[ \trace(\sR_{(N,\epsilon)})
 =\e^{\frac{4(1-\epsilon)}{n} \psi_{\eta}} \Ric_N(\dot \eta)=\Ric_N(\eta^*). \]
A straightforward calculation shows the following \emph{weighted Jacobi equation},
which generalizes \eqref{eq:Jtensor}.

\begin{lemma}[Weighted Jacobi equation]\label{lm:w-Jacobi}
With the notations as above, we have
\begin{equation}\label{eq:w-Jacobi}
\sJ_\psi^{**}+\frac{2\epsilon}{n} \psi_\eta^* \sJ_\psi^*+ \sR_{(0,\epsilon)} \sJ_\psi=0.
\end{equation}
\end{lemma}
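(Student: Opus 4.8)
The plan is to prove \eqref{eq:w-Jacobi} by a direct computation, converting the Jacobi equation \eqref{eq:Jtensor} from the proper time $t$ to the $\epsilon$-proper time $\tau_\epsilon$ while keeping careful track of the exponential factors. Writing $\mu := 2(1-\epsilon)/n$ for brevity, the first step is to record how the two differentiations are related. Since the reparametrization \eqref{par} satisfies $d\tau_\epsilon/dt = e^{-\mu\psi_\eta}$, and since the connection coefficients $\Gamma^\alpha_{\beta\gamma}(v)$ are positively $0$-homogeneous in $v$ (so the covariant derivative along $\eta$ depends only on the \emph{direction} of the reference vector, and $\eta^* = e^{\mu\psi_\eta}\dot\eta$ points in the same direction as $\dot\eta$), the chain rule gives $E^* = e^{\mu\psi_\eta} E'$ for every tensor field $E$ along $\eta$. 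In particular $\psi_\eta^* = e^{\mu\psi_\eta}\psi_\eta'$, a relation I will use at the end.

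Next I would differentiate $\sJ_\psi = e^{-\psi_\eta/n}\sJ$ (see \eqref{eq:J_psi}) twice with respect to $\tau_\epsilon$. The first derivative is
\[
\sJ_\psi^* = e^{(\mu-1/n)\psi_\eta}\Big(\sJ' - \tfrac{\psi_\eta'}{n}\sJ\Big),
\]
and differentiating once more and multiplying by $e^{\mu\psi_\eta}$ produces an expression for $\sJ_\psi^{**}$ involving $\sJ''$, $\sJ'$, $\sJ$ and the first two $t$-derivatives of $\psi_\eta$. At this point I substitute the Jacobi equation in the form $\sJ'' = -\sR\sJ$ to eliminate the second derivative of $\sJ$.

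The final step is to re-express $\sJ = e^{\psi_\eta/n}\sJ_\psi$ and $\sJ' = e^{-(\mu-1/n)\psi_\eta}\sJ_\psi^* + \frac{\psi_\eta'}{n}e^{\psi_\eta/n}\sJ_\psi$ and to collect terms. The coefficient of $\sJ_\psi^*$ works out to $-\frac{2\epsilon}{n}\psi_\eta'\,e^{\mu\psi_\eta} = -\frac{2\epsilon}{n}\psi_\eta^*$, which yields the middle term of \eqref{eq:w-Jacobi}. For the coefficient of $\sJ_\psi$ the exponential factors all combine into $e^{2\mu\psi_\eta} = e^{\frac{4(1-\epsilon)}{n}\psi_\eta}$, matching the prefactor in \eqref{mfi}. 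The scalar part is where the only real bookkeeping lies: the two contributions proportional to $(\psi_\eta')^2$ carry coefficients $-2\epsilon/n^2$ and $-(1-2\epsilon)/n^2$, which collapse to exactly $-1/n^2$, while the $\psi_\eta''$ contribution is $-\psi_\eta''/n$. Thus the $\sJ_\psi$ coefficient is $-e^{2\mu\psi_\eta}\big[\sR + \frac1n(\psi_\eta'' + (\psi_\eta')^2/n)I_{n}\big]$, which is precisely $-\sR_{(0,\epsilon)}$ because the $N=0$ specialization of \eqref{mfi} replaces $-(\psi_\eta')^2/(N-n)$ by $+(\psi_\eta')^2/n$. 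Collecting everything then gives \eqref{eq:w-Jacobi}. The computation is elementary, so the main thing to get right is this alignment of exponents together with the precise cancellation of the $(\psi_\eta')^2$ coefficients; the one conceptual point that must be settled first is the reparametrization-invariance of the covariant derivative used in the opening step, which is what makes the passage from $'$ to $*$ a clean chain-rule rescaling.
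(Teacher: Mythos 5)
Your proof is correct and follows essentially the same route as the paper: a direct computation differentiating $\sJ_\psi = \e^{-\psi_\eta/n}\sJ$ twice in $\tau_\epsilon$, substituting the unweighted Jacobi equation $\sJ'' = -\sR\sJ$, and matching the result against the $N=0$ case of \eqref{mfi}; the cancellation $-2\epsilon/n^2 - (1-2\epsilon)/n^2 = -1/n^2$ you isolate is exactly the cancellation in the paper's computation. The only (cosmetic) difference is that the paper rewrites $\sR_{(0,\epsilon)}$ in terms of $\psi_\eta^*$ and $\psi_\eta^{**}$ (its equation \eqref{eq:Rin*}) before combining, whereas you keep $t$-derivatives throughout and convert only at the end; your explicit justification of $E^* = \e^{\frac{2(1-\epsilon)}{n}\psi_\eta}E'$ via the $0$-homogeneity of the connection coefficients makes precise a point the paper uses tacitly.
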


\begin{proof}
Recalling the definition of $\sJ_\psi$ in \eqref{eq:J_psi}, we observe
\begin{align}
\sJ_\psi^*
&= \e^{-\psi_{\eta}/n}
 \bigg( \e^{\frac{2(1-\epsilon)}{n}\psi_\eta} \sJ' -\frac{\psi_\eta^*}{n}\sJ \bigg), \label{eq:J^*}\\
\sJ_\psi^{**}
&= \e^{-\psi_{\eta}/n} \bigg\{ \e^{\frac{4(1-\epsilon)}{n}\psi_\eta} \sJ''
 +\frac{1-2\epsilon}{n}\psi_\eta^* \e^{\frac{2(1-\epsilon)}{n}\psi_\eta} \sJ'
 -\frac{\psi_\eta^{**}}{n} \sJ
 -\frac{\psi_\eta^*}{n}
 \bigg( \e^{\frac{2(1-\epsilon)}{n}\psi_\eta} \sJ' -\frac{\psi_\eta^*}{n}\sJ \bigg) \bigg\} \nonumber\\
&= \e^{-\psi_{\eta}/n} \bigg( \e^{\frac{4(1-\epsilon)}{n}\psi_\eta} \sJ''
 -\frac{2\epsilon}{n}\psi_\eta^* \e^{\frac{2(1-\epsilon)}{n}\psi_\eta} \sJ'
 -\frac{\psi_\eta^{**}}{n} \sJ +\frac{(\psi_\eta^*)^2}{n^2} \sJ \bigg). \nonumber
\end{align}
Moreover,
\begin{equation}\label{eq:Rin*}
\sR_{(0,\epsilon)}
 =\e^{\frac{4(1-\epsilon)}{n}\psi_\eta} \sR
 +\frac{1}{n} \bigg( \psi_\eta^{**} -\frac{2(1-\epsilon)}{n}(\psi_\eta^*)^2
 +\frac{(\psi_\eta^*)^2}{n} \bigg) I_n.
\end{equation}
Therefore we have, with the help of $\sJ'' +\sR \sJ=0$ in \eqref{eq:Jtensor},
\[ \sJ_\psi^{**} +\sR_{(0,\epsilon)}\sJ_\psi
 =-\e^{-\psi_\eta/n} \bigg( \frac{2\epsilon}{n}\psi_\eta^* \e^{\frac{2(1-\epsilon)}{n}\psi_\eta} \sJ'
 -\frac{2\epsilon}{n^2} (\psi_\eta^*)^2 \sJ \bigg)
 =-\frac{2\epsilon}{n} \psi_\eta^* \sJ_\psi^*. \]
$\qedd$
\end{proof}

For $t \in I$ where $\sJ(t)$ is invertible, we define
\begin{equation}\label{eq:w-B}
\sB_{\epsilon}(t) :=\sJ_{\psi}^*(t) \sJ_{\psi}(t)^{-1}
 =\e^{\frac{2(1-\epsilon)}{n}\psi_\eta(t)} \sB(t) -\frac{\psi_{\eta}^*(t)}{n} I_{n},
\end{equation}
where we used \eqref{eq:J^*} and suppressed the dependence on $\psi$.
Similarly to Lemma~\ref{lm:w-Jacobi} above,
one can show the \emph{weighted Riccati equation} generalizing \eqref{eq:Ricca} as follows.

\begin{lemma}[Weighted Riccati equation]\label{lm:w-Ricca}
With the notations as above, we have
\begin{equation}\label{eq:w-Ricc}
\sB_\epsilon^* +\frac{2\epsilon}{n}\psi_\eta^* \sB_\epsilon+\sB_\epsilon^2 +\sR_{(0,\epsilon)}=0.
\end{equation}
\end{lemma}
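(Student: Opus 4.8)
The plan is to obtain \eqref{eq:w-Ricc} from the weighted Jacobi equation \eqref{eq:w-Jacobi} in exactly the way \eqref{eq:Ricca} was obtained from \eqref{eq:Jtensor}: by multiplying \eqref{eq:w-Jacobi} on the right by $\sJ_\psi^{-1}$. First I would restrict to an open subinterval of $I$ on which $\sJ(t)$ is invertible; since $\sJ_\psi=\e^{-\psi_\eta/n}\sJ$ differs from $\sJ$ only by a nowhere-vanishing scalar factor, $\sJ_\psi(t)$ is invertible there as well, and $\sB_\epsilon=\sJ_\psi^*\sJ_\psi^{-1}$ is defined.

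The one algebraic ingredient is the rule for differentiating an inverse. Applying the $*$-derivative to the identity $\sJ_\psi\sJ_\psi^{-1}=I_n$ and using the Leibniz rule for the endomorphism derivative, together with $I_n^*=0$ (the identity is parallel, as $I_n^*(P)=(I_n P)^*-I_n(P^*)=0$ for every $P$), gives $(\sJ_\psi^{-1})^*=-\sJ_\psi^{-1}\sJ_\psi^*\sJ_\psi^{-1}$. With this I would compute
\[ \sB_\epsilon^* =\sJ_\psi^{**}\sJ_\psi^{-1}+\sJ_\psi^*(\sJ_\psi^{-1})^*
 =\sJ_\psi^{**}\sJ_\psi^{-1}-\sJ_\psi^*\sJ_\psi^{-1}\sJ_\psi^*\sJ_\psi^{-1}
 =\sJ_\psi^{**}\sJ_\psi^{-1}-\sB_\epsilon^2. \]

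Next I would insert the weighted Jacobi equation, rewritten as $\sJ_\psi^{**}=-\frac{2\epsilon}{n}\psi_\eta^*\sJ_\psi^*-\sR_{(0,\epsilon)}\sJ_\psi$, and multiply on the right by $\sJ_\psi^{-1}$ to obtain $\sJ_\psi^{**}\sJ_\psi^{-1}=-\frac{2\epsilon}{n}\psi_\eta^*\sB_\epsilon-\sR_{(0,\epsilon)}$. Substituting this into the previous display yields
\[ \sB_\epsilon^* =-\frac{2\epsilon}{n}\psi_\eta^*\sB_\epsilon-\sR_{(0,\epsilon)}-\sB_\epsilon^2, \]
which is precisely \eqref{eq:w-Ricc} after rearranging the terms.

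Because all the genuine content is already carried by Lemma \ref{lm:w-Jacobi} and by the explicit form \eqref{eq:w-B} of $\sB_\epsilon$, this argument is formal and I do not anticipate a real obstacle. The only point demanding care is the bookkeeping of the covariant derivative $*$ on endomorphisms of the bundle $N_\eta(t)$: one must confirm that the Leibniz and inverse rules are valid for this operation and that $\sB_\epsilon$, $\sR_{(0,\epsilon)}$, and their products all map $N_\eta(t)$ into itself. These are guaranteed by Remark \ref{rm:Jtensor} and Proposition \ref{pr:R_v}, so they present no difficulty.
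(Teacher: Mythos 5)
Your proof is correct, and it takes a genuinely different route from the paper's. The paper never invokes Lemma~\ref{lm:w-Jacobi} at this point: it differentiates the explicit relation \eqref{eq:w-B} between $\sB_\epsilon$ and $\sB$ so as to express $\sB_\epsilon^*$ through $\sB'$, $\sB$ and derivatives of $\psi_\eta$, and then substitutes the unweighted Riccati equation \eqref{eq:Ricca} together with the expansion \eqref{eq:Rin*} of $\sR_{(0,\epsilon)}$, checking that all residual terms recombine into $-\frac{2\epsilon}{n}\psi_\eta^*\sB_\epsilon$. You instead conjugate the already-established weighted Jacobi equation \eqref{eq:w-Jacobi} by right multiplication with $\sJ_\psi^{-1}$, exactly as the unweighted Riccati equation \eqref{eq:Ricca} is obtained from \eqref{eq:Jtensor} in Subsection~\ref{ssc:w-Ricc}; your algebra ($\sB_\epsilon^*=\sJ_\psi^{**}\sJ_\psi^{-1}-\sB_\epsilon^2$, then insertion of \eqref{eq:w-Jacobi}) is sound, and there is no circularity since Lemma~\ref{lm:w-Jacobi} is proved independently. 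Your route buys conceptual economy: the $\psi$-bookkeeping is already packaged in Lemma~\ref{lm:w-Jacobi}, no cancellations need to be redone, and the structural identity (weighted Riccati equals weighted Jacobi composed with $\sJ_\psi^{-1}$) makes it transparent that the null analogue \eqref{eq:nw-Ricc} follows by the same one-line argument. The paper's route buys logical independence: Lemmas~\ref{lm:w-Jacobi} and \ref{lm:w-Ricca} receive parallel, self-contained proofs anchored directly in the unweighted equations \eqref{eq:Jtensor} and \eqref{eq:Ricca}, so neither depends on the other. Finally, the point you flag for care --- validity of the Leibniz rule, $I_n^*=0$, and the inverse rule for the $*$-derivative acting on endomorphisms of $N_\eta(t)$ --- is indeed unproblematic: $*$ is a positive scalar multiple (namely $\e^{\frac{2(1-\epsilon)}{n}\psi_\eta}$ times) of the covariant $t$-derivative, which the paper extends to endomorphisms by $E'(P):=E(P)'-E(P')$, and all three rules follow at once from that definition.
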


\begin{proof}
We deduce from
\[ \sB_\epsilon^* =\e^{\frac{4(1-\epsilon)}{n}\psi_\eta} \sB'
 +\frac{2(1-\epsilon)}{n}\psi_\eta^* \e^{\frac{2(1-\epsilon)}{n}\psi_\eta} \sB
 -\frac{\psi_\eta^{**}}{n}I_n, \]
\eqref{eq:Rin*} and $\sB' +\sB^2 +\sR=0$ in \eqref{eq:Ricca} that
\begin{align*}
\sB_\epsilon^* +\sB_\epsilon^2 +\sR_{(0,\epsilon)}
&= \frac{2(1-\epsilon)}{n}\psi_\eta^* \e^{\frac{2(1-\epsilon)}{n}\psi_\eta} \sB
 -2\e^{\frac{2(1-\epsilon)}{n} \psi_\eta} \frac{\psi_\eta^*}{n} \sB +\frac{(\psi_\eta^*)^2}{n^2} I_n
 -\frac{1-2\epsilon}{n^2}(\psi_\eta^*)^2 I_n \\
&= -\frac{2\epsilon}{n}\psi_\eta^* \e^{\frac{2(1-\epsilon)}{n}\psi_\eta} \sB
 +\frac{2\epsilon}{n^2} (\psi_\eta^*)^2 I_n
 = -\frac{2\epsilon}{n} \psi_\eta^* \sB_\epsilon.
\end{align*}
$\qedd$
\end{proof}

Observe that for $\epsilon=0$ both the weighted Jacobi and Riccati equations are simplified
to have the same forms as the unweighted situation
(compare this with \cite[Proposition~2.8]{Ca}, adding the factor $\e^{\frac{2(1-\epsilon)}{n}\psi_\eta}$
enabled us to remove the extra term appearing there).
We define the $\epsilon$-\emph{expansion scalar} by
\begin{equation}\label{eq:w-expan}
\theta_{\epsilon}(t) :={\trace}\big( \sB_{\epsilon}(t) \big)
 =\e^{\frac{2(1-\epsilon)}{n}\psi_\eta(t)} \theta(t) -\psi_{\eta}^*(t)
 =\e^{\frac{2(1-\epsilon)}{n}\psi_\eta(t)} \big( \theta(t) -\psi_{\eta}'(t) \big).
\end{equation}
For $\epsilon=0$, we may also write
$\theta_\psi :=\theta_0 =\e^{\frac{2}{n}\psi_\eta} (\theta -\psi'_{\eta})$.
Define the \emph{$\epsilon$-shear tensor} by
\begin{equation} \label{eq:w-shear}
\sigma_\epsilon(t) := \sB_{\epsilon}(t) -\frac{\theta_{\epsilon}(t)}{n}I_{n}
 =\e^{\frac{2(1-\epsilon)}{n}\psi_\eta(t)} \sigma(t).
\end{equation}
Since $\sB$ is $g_{\dot \eta}$-symmetric, so are $\sB_\epsilon$ and $\sigma_\epsilon$.

\subsection{Raychaudhuri equation}\label{ssc:Ray}

Taking the trace of the weighted Riccati equation \eqref{eq:w-Ricc},
we obtain the \emph{weighted Raychaudhuri equation}
displaying $\Ric_0$ and after a straightforward manipulation the versions displaying $\Ric_N$.

\begin{theorem}[Timelike weighted Raychaudhuri equation]\label{th:wRay} 
Let $\sJ$ be a nonsingular Lagrange tensor field along a future-directed timelike geodesic
$\eta:I \lra M$ of unit speed.
Then, for $N=0$, the $\epsilon$-expansion $\theta_\epsilon$ satisfies
\begin{equation} \label{jsp}
\theta_\epsilon^* +\frac{2\epsilon}{n} \psi_\eta^* \theta_\epsilon +\frac{\theta_\epsilon^2}{n}
 +\trace(\sigma_\epsilon^2)+ \Ric_0(\eta^*)=0
\end{equation}
on $I$.
For $N\in (-\infty,+\infty)\backslash \{0,n\}$, $\theta_\epsilon$ satisfies
\begin{equation} \label{fsp}
\theta_\epsilon^*+ \left(1-\epsilon^2\frac{N-n}{N}\right) \frac{\theta_\epsilon^2}{n}
 +\frac{N (N-n)}{n} \left(\frac{\epsilon \theta_\epsilon}{N} +\frac{\psi_\eta^*}{N-n} \right)^2
 +\trace(\sigma_\epsilon^2)+ \Ric_N(\eta^*)=0,
\end{equation}
and for $N=+\infty$, $\theta_\epsilon$ satisfies
\begin{equation} \label{fsq}
\theta_\epsilon^* +(1-\epsilon^2) \frac{\theta_\epsilon^2}{n}
 +\frac{1}{n} (\epsilon \theta_\epsilon +\psi_\eta^*)^2
 +\trace(\sigma_\epsilon^2)+ \Ric_\infty(\eta^*) =0.
\end{equation}
\end{theorem}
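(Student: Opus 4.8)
The strategy is to obtain the $N=0$ equation \eqref{jsp} by taking the trace of the weighted Riccati equation \eqref{eq:w-Ricc} of Lemma~\ref{lm:w-Ricca}, and then to deduce \eqref{fsp} and \eqref{fsq} from it by rewriting the curvature term and completing a square; no input beyond Lemma~\ref{lm:w-Ricca} is required. First I would trace \eqref{eq:w-Ricc} term by term. Since the $*$-derivative commutes with the trace, $\trace(\sB_\epsilon^*)=\theta_\epsilon^*$ by the definition of $\theta_\epsilon$ in \eqref{eq:w-expan}. Decomposing $\sB_\epsilon=\sigma_\epsilon+(\theta_\epsilon/n)I_{n}$ as in \eqref{eq:w-shear} and using that $\sigma_\epsilon$ is traceless, the cross term drops out and $\trace(\sB_\epsilon^2)=\trace(\sigma_\epsilon^2)+\theta_\epsilon^2/n$. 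Finally $\trace(\sR_{(0,\epsilon)})=\Ric_0(\eta^*)$ by the identity recorded just after \eqref{mfi}. Summing these contributions yields \eqref{jsp} at once, settling the case $N=0$.

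For $N\in(-\infty,+\infty)\setminus\{0,n\}$ the plan is to trade $\Ric_0$ for $\Ric_N$. Reading off \eqref{eq:Ric_N} in Definition~\ref{def:RicN} gives $\Ric_0(\dot\eta)-\Ric_N(\dot\eta)=\frac{N}{n(N-n)}(\psi_\eta')^2$. Because $\Ric_N$ is positively $2$-homogeneous while $\eta^*=\e^{\frac{2(1-\epsilon)}{n}\psi_\eta}\dot\eta$ and $\psi_\eta^*=\e^{\frac{2(1-\epsilon)}{n}\psi_\eta}\psi_\eta'$ by the parametrization \eqref{par}, the same exponential factor multiplies both sides, so
\[ \Ric_0(\eta^*)=\Ric_N(\eta^*)+\frac{N}{n(N-n)}(\psi_\eta^*)^2. \]
Substituting this into \eqref{jsp} produces an extra $\frac{N}{n(N-n)}(\psi_\eta^*)^2$ summand, and it then remains only to complete the square: I would check that the three quadratics $\frac{2\epsilon}{n}\psi_\eta^*\theta_\epsilon$, $\frac{\theta_\epsilon^2}{n}$ and $\frac{N}{n(N-n)}(\psi_\eta^*)^2$ reassemble as $\bigl(1-\epsilon^2\frac{N-n}{N}\bigr)\frac{\theta_\epsilon^2}{n}+\frac{N(N-n)}{n}\bigl(\frac{\epsilon\theta_\epsilon}{N}+\frac{\psi_\eta^*}{N-n}\bigr)^2$, which is exactly the grouping of \eqref{fsp}. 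Expanding the perfect square confirms that the $\theta_\epsilon^2$, $\theta_\epsilon\psi_\eta^*$ and $(\psi_\eta^*)^2$ coefficients all match.

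For $N=+\infty$ I would repeat the identical manoeuvre using the relation $\Ric_0(\eta^*)=\Ric_\infty(\eta^*)+\frac{1}{n}(\psi_\eta^*)^2$, read off from the $\Ric_\infty$ line of Definition~\ref{def:RicN}; the quadratics then regroup as $(1-\epsilon^2)\frac{\theta_\epsilon^2}{n}+\frac{1}{n}(\epsilon\theta_\epsilon+\psi_\eta^*)^2$, giving \eqref{fsq}. Equivalently, \eqref{fsq} is recovered as the $N\to+\infty$ limit of \eqref{fsp}. The derivation is routine throughout — the text rightly calls it a straightforward manipulation — so no step is a genuine obstacle; the only place demanding care is the bookkeeping of the homogeneity factors that convert $'$-derivatives and $\dot\eta$ into $*$-derivatives and $\eta^*$, together with verifying the final square-completion identity.
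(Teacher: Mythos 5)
Your proposal is correct and follows essentially the same route as the paper: take the trace of the weighted Riccati equation \eqref{eq:w-Ricc} using the decomposition $\sB_\epsilon=\sigma_\epsilon+(\theta_\epsilon/n)I_n$ to get \eqref{jsp}, then pass to \eqref{fsp} and \eqref{fsq} by comparing $\Ric_0$ with $\Ric_N$ (respectively $\Ric_\infty$) and completing the square. Your write-up merely makes explicit the details the paper compresses into ``by comparing $\Ric_0$ and $\Ric_N$,'' including the correct identity $\Ric_0(\eta^*)-\Ric_N(\eta^*)=\frac{N}{n(N-n)}(\psi_\eta^*)^2$ and the homogeneity bookkeeping.
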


\begin{proof}
The first equation \eqref{jsp} is obtained as the trace of \eqref{eq:w-Ricc} by noticing
\[ \trace(\sB_\epsilon^2) =\trace\left( \sigma_\epsilon^2
 +\frac{2\theta_\epsilon}{n} \sigma_\epsilon +\frac{\theta_\epsilon^2}{n^2}I_n \right)
 =\trace(\sigma_\epsilon^2) +\frac{\theta_\epsilon^2}{n}. \]
Then \eqref{fsp} follows from \eqref{jsp} by comparing $\Ric_0$ and $\Ric_N$.
The expression \eqref{fsq} for $N=+\infty$ can be derived again from \eqref{jsp},
or as the limiting case of \eqref{fsp}.
$\qedd$
\end{proof}

The usefulness of \eqref{fsp} and \eqref{fsq} stands in the possibility of controlling the positivity
of the coefficient in front of $\theta_\epsilon^2$, as we shall see.
Though we did not have a Raychaudhuri equation with this property for  $N=n$,
we do have a meaningful \emph{Raychaudhuri inequality}.

\begin{proposition}[Timelike weighted Raychaudhuri inequality]\label{pr:Ray-ineq} 
Let $\sJ$ be a nonsingular Lagrange tensor field along a timelike geodesic
$\eta:I \lra M$ of unit speed.
For every $\epsilon \in \R$ and $N\in (-\infty,0) \cup [n,+\infty]$, we have on $I$
\begin{equation} \label{dos}
\theta_{\epsilon}^*
 \le -{\Ric_N}(\eta^*) -\trace(\sigma_{\epsilon}^2) -c \theta_{\epsilon}^2,
\end{equation}
where
\begin{equation}\label{eq:cNe}
c=c(N,\epsilon):=\frac{1}{n}\left(1-\epsilon^2\frac{N-n}{N}\right).
\end{equation}
Moreover, for $\epsilon=0$ one can take $N \to 0$
and \eqref{dos} holds with $c=c(0,0):=1/n$.
\end{proposition}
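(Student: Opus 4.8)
The plan is to read the inequality off directly from the exact Raychaudhuri identities of Theorem~\ref{th:wRay}, working casewise in $N$, since in every case the passage from an equality to the asserted inequality amounts to discarding a single term whose sign is controlled by the range of $N$. Throughout I invoke Theorem~\ref{th:wRay} for the same nonsingular Lagrange tensor field $\sJ$, so that $\theta_\epsilon$, $\sigma_\epsilon$ and the relevant equations are all available.

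First, for $N \in (-\infty,0) \cup (n,+\infty)$ I would solve \eqref{fsp} for $\theta_\epsilon^*$. The coefficient of $\theta_\epsilon^2/n$ in \eqref{fsp} is exactly $n\,c(N,\epsilon)$ by \eqref{eq:cNe}, so the identity rearranges to
\[ \theta_\epsilon^* = -\Ric_N(\eta^*) -\trace(\sigma_\epsilon^2) -c\,\theta_\epsilon^2 -\frac{N(N-n)}{n}\left( \frac{\epsilon\theta_\epsilon}{N} +\frac{\psi_\eta^*}{N-n} \right)^2. \]
The key observation is that $N(N-n) \ge 0$ precisely on this range, so the last term is nonpositive and may be dropped to give \eqref{dos}. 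The endpoint $N=+\infty$ is handled the same way from \eqref{fsq}: there $c(\infty,\epsilon)=(1-\epsilon^2)/n$ and the leftover term is $\tfrac1n(\epsilon\theta_\epsilon+\psi_\eta^*)^2 \ge 0$, which I discard.

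The delicate case is $N=n$, where \eqref{fsp} is unavailable (it has $N-n$ in a denominator) and $\Ric_n$ is defined by cases in Definition~\ref{def:RicN}; here $c(n,\epsilon)=1/n$. If $\psi_\eta^*(t)\neq0$ at the point under consideration, then $\psi_\eta'(t)\neq0$ as well (the two differ by a positive factor), so $\Ric_n(\eta^*)=-\infty$, the right-hand side of \eqref{dos} is $+\infty$, and the inequality is trivial. If $\psi_\eta^*(t)=0$, I would use that $\Ric_n(\eta^*)=\Ric_0(\eta^*)$ at such a point (the two differ only through the $(\psi_\eta')^2$ term, which then vanishes) and read off from the $N=0$ equation \eqref{jsp} that
\[ \theta_\epsilon^* = -\Ric_0(\eta^*) -\trace(\sigma_\epsilon^2) -\frac{\theta_\epsilon^2}{n} -\frac{2\epsilon}{n}\psi_\eta^*\theta_\epsilon; \]
since $\psi_\eta^*=0$ kills the final term, this is exactly \eqref{dos} with equality and the correct constant $1/n$. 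The concluding ``moreover'' assertion is the remark that at $\epsilon=0$ the problematic factor $\epsilon^2(N-n)/N$ vanishes identically, so $c(0,0)=1/n$ makes sense, and \eqref{jsp} with $\epsilon=0$ reads $\theta_0^* = -\Ric_0(\eta^*)-\trace(\sigma_0^2)-\theta_0^2/n$, which is \eqref{dos} for $N=0$ as an equality.

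I expect the main obstacle to be not computational but organizational, namely the bookkeeping in the $N=n$ case: one must align the case split in the definition of $\Ric_n$ with the vanishing of $\psi_\eta^*$, justify $\Ric_n(\eta^*)=\Ric_0(\eta^*)$ on the finite branch, and confirm that this branch collapses exactly onto the $N=0$ identity so that the constant $c=1/n$ emerges correctly.
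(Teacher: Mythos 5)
Your proposal is correct, and for the main ranges it is exactly the paper's argument: for $N\in(-\infty,0)\cup(n,+\infty]$ the paper likewise reads \eqref{dos} off from \eqref{fsp} or \eqref{fsq} by discarding the square term, whose coefficient $N(N-n)/n$ (resp.\ $1/n$ at $N=+\infty$) is nonnegative precisely on that range, and the ``moreover'' case $N=\epsilon=0$ is, as you say, immediate from \eqref{jsp}. Where you genuinely diverge is the endpoint $N=n$: the paper disposes of it in one line by letting $N\downarrow n$ in the already-established inequality for $N>n$, using that $c(N,\epsilon)\to 1/n$ and that $\Ric_N(\eta^*)\to\Ric_n(\eta^*)$ by the very definition of $\Ric_n$ as a limit (with the convention $\Ric_n=-\infty$ when $\psi_\eta'\neq 0$, which makes \eqref{dos} vacuous there). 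You instead argue pointwise, splitting on whether $\psi_\eta^*(t)$ vanishes, identifying $\Ric_n(\eta^*)=\Ric_0(\eta^*)$ on the set $\{\psi_\eta^*=0\}$, and collapsing onto the $N=0$ identity \eqref{jsp}; this is valid (note that $\psi_\eta^*=\e^{\frac{2(1-\epsilon)}{n}\psi_\eta}\psi_\eta'$, and that $\Ric_n(\eta^*(t))=-\infty$ iff $\psi_\eta'(t)\neq 0$ by the $0$-homogeneity of $\psi$ and reparametrization of the geodesic, a point worth stating explicitly). The limit argument is shorter and avoids the case analysis; your route has the modest payoff of showing that \eqref{dos} actually holds with \emph{equality} at $N=n$ wherever $\psi_\eta^*=0$, rather than merely as an inequality inherited in the limit.
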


\begin{proof}
For $N\in (-\infty,0) \cup (n,+\infty]$, the inequality \eqref{dos} readily follows from
\eqref{fsp} or \eqref{fsq}.
The case of $N=n$ is obtained by taking the limit $N \downarrow n$.
The case of $N=\epsilon=0$ is immediate from \eqref{jsp}.
$\qedd$
\end{proof}

Looking at the condition for $c>0$, we arrive at a key step for singularity theorems.

\begin{proposition}[Timelike $\epsilon$-range for convergence] \label{bdo} 
Given $N \in (-\infty,0] \cup [n,+\infty]$, take $\epsilon\in \R$ such that
\begin{equation} \label{ran}
\epsilon=0 \text{ for } N=0, \quad
 \vert\epsilon\vert < \sqrt{\frac{N}{N-n}} \text{ for } N\neq 0.
\end{equation}
Let $\eta:(a,b) \lra M$ be a timelike geodesic of unit speed.
Assume that $\Ric_N({\eta}^*) \ge 0$ holds on $(a,b)$,
and let $\sJ$ be a Lagrange tensor field along $\eta$ such that for some $t_0 \in (a,b)$
we have $\theta_\epsilon(t_0) <0$.
Then we have $\det\sJ(t)=0$ for some $t \in [t_0,t_0 +s_0]$ provided that $t_0 +s_0<b$,
where we set, with $c=c(N,\epsilon)>0$ in \eqref{eq:cNe},
\begin{equation}\label{eq:s_0}
s_0:=\tau_\epsilon^{-1} \bigg( \tau_\epsilon(t_0) -\frac{1}{c\theta_\epsilon(t_0)} \bigg) -t_0.
\end{equation}

Similarly, if $\theta_\epsilon(t_0)>0$,
then we have $\det\sJ(t)=0$ for some $t \in [t_0+s_0,t_0]$
provided that  $t_0+s_0 >a$ for $s_0$ above.
\end{proposition}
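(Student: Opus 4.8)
The plan is to exploit the Raychaudhuri inequality \eqref{dos} as a differential inequality for $\theta_\epsilon$ and show that the right-hand side forces $\theta_\epsilon$ to blow up to $-\infty$ within the claimed parameter interval, which in turn forces $\sJ$ to become singular. First I would work in the $\epsilon$-proper time $\tau_\epsilon$ rather than the proper time $t$, since \eqref{dos} is naturally an inequality for the $*$-derivative. Under the hypothesis $\Ric_N(\eta^*)\ge 0$, and using that $\trace(\sigma_\epsilon^2)\ge 0$ because $\sigma_\epsilon$ is $g_{\dot\eta}$-symmetric and acts on the \emph{spacelike} subspace $N_\eta$ (so it is symmetric with respect to a positive-definite inner product, hence diagonalizable with real eigenvalues and nonnegative trace of its square), the inequality \eqref{dos} simplifies to the autonomous comparison inequality $\theta_\epsilon^*\le -c\,\theta_\epsilon^2$ with $c=c(N,\epsilon)>0$. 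The positivity of $c$ is exactly guaranteed by the $\epsilon$-range condition \eqref{ran}, via the formula \eqref{eq:cNe}.

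Next I would integrate this Riccati-type inequality. Writing $u:=1/\theta_\epsilon$ wherever $\theta_\epsilon\ne 0$, the inequality $\theta_\epsilon^*\le -c\theta_\epsilon^2$ becomes $u^*\ge c$ as long as $\theta_\epsilon<0$ (dividing by $\theta_\epsilon^2>0$ and using $u^*=-\theta_\epsilon^*/\theta_\epsilon^2$). Starting from $\theta_\epsilon(t_0)<0$, the expansion stays negative as $\tau_\epsilon$ increases (a standard sign-persistence argument: $\theta_\epsilon$ cannot cross $0$ from below because at a zero the inequality would give $\theta_\epsilon^*\le 0$, but more directly the reciprocal estimate $u^*\ge c$ drives $u$ from the negative value $u(t_0)=1/\theta_\epsilon(t_0)<0$ monotonically upward, so $u$ must reach $0$—i.e.\ $\theta_\epsilon\to-\infty$—after an increase of $\tau_\epsilon$ no larger than $-u(t_0)/c=-1/\big(c\,\theta_\epsilon(t_0)\big)$). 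Translating back through $\tau_\epsilon$, this says a singularity of $\theta_\epsilon$ occurs at or before the parameter value $\tau_\epsilon(t_0)-1/\big(c\,\theta_\epsilon(t_0)\big)$, which is precisely the definition of $t_0+s_0$ in \eqref{eq:s_0}. Thus on $[t_0,t_0+s_0]$ the expansion $\theta_\epsilon$ blows up, provided this interval stays inside $(a,b)$, i.e.\ $t_0+s_0<b$.

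It remains to convert the blow-up of $\theta_\epsilon$ into the vanishing of $\det\sJ$. Here I would argue by contraposition: if $\sJ(t)$ were invertible throughout $[t_0,t_0+s_0]$, then $\sB=\sJ'\sJ^{-1}$, and hence $\sB_\epsilon$ and $\theta_\epsilon=\trace(\sB_\epsilon)$, would be smooth and finite on that whole interval, contradicting the blow-up just derived. Therefore $\det\sJ(t)=0$ for some $t\in[t_0,t_0+s_0]$. The symmetric case $\theta_\epsilon(t_0)>0$ runs the same argument with $\tau_\epsilon$ \emph{decreasing}: the reciprocal $u=1/\theta_\epsilon$ now starts positive and $u^*\ge c$ forces $u$ to hit $0$ as $\tau_\epsilon$ decreases, yielding a singularity at or before $t_0+s_0$ on the past side, under $t_0+s_0>a$.

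The main obstacle I anticipate is the careful bookkeeping between the two time parametrizations $t$ and $\tau_\epsilon$, and in particular verifying that the reciprocal comparison argument is valid all the way up to the blow-up time (one must ensure $\theta_\epsilon$ does not pass through $0$, so that $u=1/\theta_\epsilon$ stays well-defined and the estimate $u^*\ge c$ persists). The positivity $\trace(\sigma_\epsilon^2)\ge 0$ is essential and relies on $\sigma_\epsilon$ being symmetric on the positive-definite space $N_\eta$; this is the one structural point deserving explicit mention. The algebraic identity matching $-1/\big(c\,\theta_\epsilon(t_0)\big)$ to the length of the $\tau_\epsilon$-interval is then a direct substitution, and the passage from $\theta_\epsilon\to\pm\infty$ to $\det\sJ=0$ is the standard Jacobi/Riccati correspondence already encoded in $\sB_\epsilon=\sJ_\psi^*\sJ_\psi^{-1}$.
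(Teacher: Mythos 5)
Your proposal is correct and follows essentially the same route as the paper's proof: both reduce \eqref{dos} to $\theta_\epsilon^*\le -c\,\theta_\epsilon^2$ using $\Ric_N(\eta^*)\ge 0$ and the $g_{\dot\eta}$-symmetry of $\sigma_\epsilon$ (hence $\trace(\sigma_\epsilon^2)\ge 0$), integrate the reciprocal $\theta_\epsilon^{-1}$ in the $\tau_\epsilon$-parameter to force $\theta_\epsilon\to-\infty$ by $t_0+s_0$, and then contradict the finiteness of $\theta_\epsilon$ that invertibility of $\sJ$ on the whole interval would guarantee (the paper makes this last step explicit via $\trace(\sB)=(\det\sJ)'/\det\sJ$). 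The only presentational caveat is that the blow-up derivation itself already presupposes invertibility of $\sJ$ (since $\theta_\epsilon$ is only defined there), so the argument is cleanest when stated from the outset as a proof by contradiction, exactly as the paper does.
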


Note that the assumption $\Ric_N(\eta^*) \ge 0$ is equivalent to $\Ric_N(\dot{\eta}) \ge 0$,
and that $\theta_\epsilon(t_0)<0$ is equivalent to $\theta_\psi(t_0)<0$
(corresponding to $\epsilon=0$).
When $N=n$, the condition \eqref{ran} is void and we can take any $\epsilon \in \R$.

\begin{proof}
Let us consider the former case of $\theta_\epsilon(t_0)<0$, then $s_0>0$.
Observe that $\theta_\epsilon(t_0)^{-1} =c(\tau_\epsilon(t_0)-\tau_\epsilon(t_0+s_0))$.
Assume to the contrary that $[t_0,t_0 +s_0] \subset (a,b)$ and
$\det\sJ(t) \neq 0$ for all $t \in [t_0,t_0 +s_0]$.
Since $\sigma_\epsilon$ is $g_{\dot \eta}$-symmetric,
we deduce from \eqref{dos} that $\theta_{\epsilon}^* \le -c{\theta_{\epsilon}^2} \le 0$.
Hence we have $\theta_\epsilon<0$ on $[t_0,b)$ and,
moreover, $[\theta_{\epsilon}^{-1}]^* \ge c$.
Integrating this inequality from $t_0$ to $t \in (t_0,t_0+s_0)$ yields
\[ \theta_{\epsilon}(t)
 \le \frac{1}{\theta_{\epsilon}(t_0)^{-1} +c(\tau_\epsilon(t)-\tau_\epsilon(t_0))}
 =\frac{1}{c(\tau_\epsilon(t)-\tau_\epsilon(t_0+s_0))}<0. \]
This implies $\lim_{t \uparrow t_0+s_0}\theta_{\epsilon}(t) =-\infty$.
Then, since
\[ \theta_{\epsilon}
 =\e^{\frac{2(1-\epsilon)}{n}\psi_\eta} \trace (\sB) -\psi_{\eta}^*
 =\e^{\frac{2(1-\epsilon)}{n}\psi_\eta} \frac{(\det \sJ)'}{\det\sJ} -\psi_{\eta}^*, \]
it necessarily holds that $\det\sJ(t_0+s_0)=0$, a contradiction.
The case of $\theta_{\epsilon}(t_0)>0$ (where $s_0<0$) is proved analogously.
$\qedd$
\end{proof}

\begin{remark}[Admissible range of $\epsilon$]\label{rm:eps}
The condition \eqref{ran} for $\epsilon$ gives an important insight on the relation
between $N$ and the admissible range of $\epsilon$.
Observe that $\epsilon=0$ as in \cite{WW1,WW2} is allowed for any $N \in (-\infty,0] \cup [n,+\infty]$,
while $\epsilon=1$ corresponding to the usual proper time is allowed only for $N \in [n,+\infty)$.
\end{remark}

\subsection{Completenesses}\label{ssc:cplt}

Inspired by Proposition~\ref{bdo},
we introduce a completeness condition associated with the $\epsilon$-proper time in \eqref{par}.

\begin{definition}[Timelike $\epsilon$-completeness]\label{df:e-cplt}
Let $\eta:(a,b) \lra M$ be an inextendible timelike geodesic.
We say that $\eta$ is \emph{future $\epsilon$-complete}
 if $\lim_{t \to b}\tau_\epsilon(t)=+\infty$.
Similarly, we say that it is \emph{past $\epsilon$-complete} if $\lim_{t \to a}\tau_\epsilon(t)=-\infty$.
The spacetime $(M,L,\psi)$ is said to be \emph{future timelike $\epsilon$-complete}
if every inextendible timelike geodesic is future $\epsilon$-complete,
and similar in the past case.
\end{definition}

If $\epsilon=1$ one simply speaks of the (geodesic) completeness
with respect to the usual proper time (namely $b=+\infty$),
while if $\epsilon=0$ one speaks of the \emph{$\psi$-completeness}
introduced by Wylie \cite{Wy} in the Riemannian case
and by Woolgar--Wylie \cite{WW1,WW2} in the Lorentzian case.
Note also that the $\epsilon$-completeness was tacitly assumed in \cite{Ca,GW}
through the upper boundedness of $\psi$ (see Lemma~\ref{lm:cplt} below).
The following corollary is immediate from Proposition~\ref{bdo}.

\begin{corollary} \label{mci}
Let $N \in (-\infty,0]\cup [n,+\infty]$ and $\sJ$ be a Lagrange tensor field
along a future inextendible timelike geodesic $\eta : (a,b) \lra M$
satisfying $\Ric_N(\dot\eta) \ge 0$.
Assume that $\eta$ is future $\epsilon$-complete for some $\epsilon \in \R$
that belongs to the timelike $\epsilon$-range in \eqref{ran},
and that $\theta_\epsilon(t_0)<0$ for some $t_0 \in (a,b)$.
Then $\eta$ develops a point $t \in (t_0,b)$ where $\det\sJ(t)=0$.
\end{corollary}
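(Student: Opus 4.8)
The plan is to derive Corollary~\ref{mci} directly from Proposition~\ref{bdo} by verifying that the completeness hypothesis guarantees we never run off the end of the interval before the conjugate point forms. First I would invoke Proposition~\ref{bdo} in the case $\theta_\epsilon(t_0)<0$, which produces the candidate parameter value
\[
s_0=\tau_\epsilon^{-1}\bigg(\tau_\epsilon(t_0)-\frac{1}{c\theta_\epsilon(t_0)}\bigg)-t_0,
\]
with $c=c(N,\epsilon)>0$. Since $\theta_\epsilon(t_0)<0$ and $c>0$, the argument $\tau_\epsilon(t_0)-1/(c\theta_\epsilon(t_0))$ is strictly larger than $\tau_\epsilon(t_0)$, so we are asking whether this finite increment in the $\epsilon$-proper time keeps us inside $(a,b)$. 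The conclusion of Proposition~\ref{bdo} is conditional on $t_0+s_0<b$, so the single thing I must establish is exactly that inequality.

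The key step is to use future $\epsilon$-completeness to discharge that hypothesis. By Definition~\ref{df:e-cplt}, future $\epsilon$-completeness means $\lim_{t\to b}\tau_\epsilon(t)=+\infty$. The map $\tau_\epsilon$ is a strictly increasing diffeomorphism of $(a,b)$ onto its image (its $t$-derivative $\e^{2(\epsilon-1)\psi_\eta/n}$ is everywhere positive), so its inverse $\tau_\epsilon^{-1}$ is well defined on that image. Because the image is an interval whose right endpoint is $+\infty$, the finite value $\tau_\epsilon(t_0)-1/(c\theta_\epsilon(t_0))$ lies in the image of $\tau_\epsilon$, whence $t_0+s_0=\tau_\epsilon^{-1}(\tau_\epsilon(t_0)-1/(c\theta_\epsilon(t_0)))$ is a genuine point of $(a,b)$, and in particular $t_0+s_0<b$. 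This verifies the proviso of Proposition~\ref{bdo}, so that proposition yields a $t\in[t_0,t_0+s_0]\subset(t_0,b)$ with $\det\sJ(t)=0$.

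The admissibility of $c>0$ is guaranteed automatically: the hypothesis that $\epsilon$ lies in the timelike $\epsilon$-range \eqref{ran} for the given $N\in(-\infty,0]\cup[n,+\infty]$ is precisely what makes $c(N,\epsilon)$ positive, so Proposition~\ref{bdo} applies verbatim. I would remark that the endpoint $t=t_0$ cannot be the zero of $\det\sJ$ since $\sJ(t_0)$ is invertible by the standing assumption that $\theta_\epsilon(t_0)$ is defined (so the claimed zero lies strictly in $(t_0,b)$, matching the stated conclusion $t\in(t_0,b)$).

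Honestly, there is no substantial obstacle here: the corollary is essentially a packaging of Proposition~\ref{bdo} together with the definition of $\epsilon$-completeness, and the only nontrivial point is recognizing that completeness is exactly the statement that $\tau_\epsilon$ exhausts an unbounded interval so that the finite shift by $-1/(c\theta_\epsilon(t_0))$ stays in range. If anything, the mild subtlety worth flagging is the monotone invertibility of $\tau_\epsilon$ and the sign bookkeeping ensuring the shifted argument exceeds $\tau_\epsilon(t_0)$ rather than falling below $\tau_\epsilon(a)$; this is why only \emph{future} completeness (and not past) is needed in the case $\theta_\epsilon(t_0)<0$.
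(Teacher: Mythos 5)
Your proposal is correct and is essentially the paper's own proof, which likewise reduces the corollary to Proposition~\ref{bdo} by noting that future $\epsilon$-completeness (i.e., $\tau_\epsilon$ exhausting an interval unbounded above) guarantees a solution $s_0 \in (0,b-t_0)$ of $\theta_\epsilon(t_0)^{-1} = c(\tau_\epsilon(t_0)-\tau_\epsilon(t_0+s_0))$, so that the proviso $t_0+s_0<b$ is automatically satisfied. Your additional remark that $t=t_0$ is excluded because $\sJ(t_0)$ is invertible is a correct refinement that the paper leaves implicit.
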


\begin{proof}
It suffices to show that one can always find $s_0 \in (0,b- t_0)$ satisfying
$\theta_\epsilon(t_0)^{-1} =c(\tau_\epsilon(t_0)-\tau_\epsilon(t_0+s_0))$.
This clearly holds true under the future $\epsilon$-completeness.
$\qedd$
\end{proof}

We remark that the future $\epsilon$-completeness clearly requires the future inextendability,
but not necessarily $b=+\infty$.
The next lemma is an immediate consequence of Definition~\ref{df:e-cplt},
see \cite[Lemma~1.3]{WW1}.

\begin{lemma}\label{lm:cplt}
Let $\epsilon<1$.
If $\psi$ is bounded above, then the future $($resp.\ past$)$ completeness implies
the future $($resp.\ past$)$ $\epsilon$-completeness.
If $\psi_\eta$ is non-increasing along every timelike geodesic $\eta$,
then the future completeness implies the future $\epsilon$-completeness.
Similarly, if $\psi_\eta$ is non-decreasing along every timelike geodesic $\eta$,
then the past completeness implies the past $\epsilon$-completeness.
\end{lemma}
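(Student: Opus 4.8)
The plan is to reduce every assertion to the elementary observation that, because $\epsilon<1$, the constant $\frac{2(\epsilon-1)}{n}$ multiplying $\psi_\eta$ in the definition \eqref{par} of the $\epsilon$-proper time is \emph{negative}. Fixing a base point $t_0\in(a,b)$ along an inextendible timelike geodesic $\eta:(a,b)\lra M$, I would write $\tau_\epsilon$ as the monotone integral
\[
\tau_\epsilon(t)=\tau_\epsilon(t_0)+\int_{t_0}^t \e^{\frac{2(\epsilon-1)}{n}\psi_\eta(s)}\,ds,
\]
so that future $\epsilon$-completeness (Definition~\ref{df:e-cplt}) amounts to the divergence $\int_{t_0}^{b}\e^{\frac{2(\epsilon-1)}{n}\psi_\eta(s)}\,ds=+\infty$, and past $\epsilon$-completeness to $\int_{a}^{t_0}\e^{\frac{2(\epsilon-1)}{n}\psi_\eta(s)}\,ds=+\infty$. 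Recall that future (resp.\ past) completeness means $b=+\infty$ (resp.\ $a=-\infty$).

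For the first assertion I would suppose $\psi\le C$ on $\overline{\Omega}\setminus\{0\}$, so that $\psi_\eta(s)\le C$ along every timelike geodesic. Since $\frac{2(\epsilon-1)}{n}<0$, this upper bound becomes a uniform positive lower bound $\e^{\frac{2(\epsilon-1)}{n}\psi_\eta(s)}\ge \e^{\frac{2(\epsilon-1)}{n}C}=:c_0>0$ on the integrand. If $\eta$ is future complete then $b=+\infty$, whence $\int_{t_0}^{b}\e^{\frac{2(\epsilon-1)}{n}\psi_\eta(s)}\,ds\ge c_0\int_{t_0}^{+\infty}ds=+\infty$, giving future $\epsilon$-completeness; the past case is symmetric.

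For the monotonicity assertions I would exploit the fact that monotonicity of $\psi_\eta$ furnishes the required lower bound precisely on the relevant half-line. If $\psi_\eta$ is non-increasing, then for $s\ge t_0$ we have $\psi_\eta(s)\le\psi_\eta(t_0)$, and again using $\frac{2(\epsilon-1)}{n}<0$ this yields $\e^{\frac{2(\epsilon-1)}{n}\psi_\eta(s)}\ge \e^{\frac{2(\epsilon-1)}{n}\psi_\eta(t_0)}=:c_1>0$ for all $s\ge t_0$; combined with $b=+\infty$ under future completeness, the integral over $(t_0,b)$ diverges as before. The past statement for non-decreasing $\psi_\eta$ is entirely analogous, using the bound on $(a,t_0]$ together with $a=-\infty$.

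I do not expect a genuine obstacle here; the whole content is the correct bookkeeping of signs and directions. The point deserving attention is that $\epsilon<1$ is exactly what converts an upper bound on $\psi_\eta$ into a \emph{lower} bound on the integrand, and that one-sided monotonicity controls the integrand only on one side of $t_0$, which explains the asymmetry: non-increasing $\psi_\eta$ yields future (not past) $\epsilon$-completeness, and non-decreasing $\psi_\eta$ yields the past version. This recovers and slightly generalizes \cite[Lemma~1.3]{WW1}.
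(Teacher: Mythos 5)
Your proof is correct and is essentially the argument the paper has in mind: the paper states Lemma~\ref{lm:cplt} as an ``immediate consequence of Definition~\ref{df:e-cplt}'' (citing [WW1, Lemma~1.3]) without writing it out, and your verification---that $\epsilon<1$ makes the exponent coefficient $\tfrac{2(\epsilon-1)}{n}$ negative, so an upper bound (resp.\ one-sided monotonicity bound) on $\psi_\eta$ gives a positive lower bound on the integrand of \eqref{par} over the relevant half-line, forcing $\tau_\epsilon$ to diverge when the proper-time domain is infinite---is precisely that immediate verification, with the signs and the future/past asymmetry handled correctly.
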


\subsection{Timelike geodesic congruence from a point}

In this subsection, we study timelike geodesic congruences issued from a point.
A similar analysis can be applied to timelike geodesic congruences
that are  orthogonal to a spacelike hypersurface.
Our objective is to show that they determine Lagrange tensor fields.
This subsection does not use the weight.

\begin{proposition} \label{cong}
Let $(M,L)$ be a Finsler spacetime, and let $\eta:[0,l] \lra M$ be a timelike geodesic of unit speed.
Suppose that there is no point conjugate to $\eta(0)$ along $\eta$.
Then there exists a Lagrange tensor field $\sJ(t):N_{\eta}(t) \lra N_{\eta}(t)$
such that $\sJ(0)=0$, $\sJ'(0)=I_n$ and $\det \sJ(t)>0$ for all $t \in (0,l]$.
\end{proposition}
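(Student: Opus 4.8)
The plan is to \emph{construct} $\sJ$ as the unique solution of the matrix Jacobi equation with the prescribed initial data and then verify the three defining properties in turn. First I would fix a $g_{\dot\eta}$-parallel $g_{\dot\eta}$-orthonormal frame $\{e_i(t)\}_{i=1}^n$ of the subbundle $N_\eta$. This is legitimate because $N_\eta$ is parallel along $\eta$: if $X$ is parallel with $g_{\dot\eta}(X,\dot\eta)=0$ at one instant, then $\frac{d}{dt}g_{\dot\eta}(X,\dot\eta)=g_{\dot\eta}(X',\dot\eta)+g_{\dot\eta}(X,\dot\eta')=0$ by \eqref{eq:g_eta}, since $X'\equiv 0$ and $\dot\eta'\equiv 0$ along the geodesic $\eta$. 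In this frame \eqref{eq:Jtensor} becomes a linear second-order system $\sJ''+\sR\sJ=0$ with smooth coefficients, so there is a unique solution $\sJ(t)$ satisfying $\sJ(0)=0$ and $\sJ'(0)=I_n$, and $\det\sJ(t)$ is well defined.

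Next I would check the defining properties. The Jacobi equation \eqref{eq:Jtensor} holds by construction. For the kernel condition, suppose $v\in\ker\sJ(t_0)\cap\ker\sJ'(t_0)$ and let $Y=\sJ(P)$ be the Jacobi field attached to the parallel field $P$ with $P(t_0)=v$; then $Y(t_0)=\sJ(t_0)(v)=0$ and $Y'(t_0)=\sJ'(t_0)(v)=0$, so $Y\equiv 0$ by uniqueness, whence $0=Y'(0)=\sJ'(0)(P(0))=P(0)$ and therefore $v=P(t_0)=0$. The symmetry \eqref{eq:Ltensor} is immediate from Remark~\ref{rm:Jtensor}(b): since $\sJ(0)=0$, the expression \eqref{eq:Ltensor'} vanishes at $t=0$ for every pair of parallel fields, and that remark shows it is then constant, hence identically zero. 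Thus $\sJ$ is a Lagrange tensor field.

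The substantive step, and the only place the hypothesis is genuinely used, is the nonvanishing of $\det\sJ$ on $(0,l]$. If $\det\sJ(t_1)=0$ for some $t_1\in(0,l]$, choose $v\neq 0$ in $\ker\sJ(t_1)$ and let $P$ be the parallel field with $P(t_1)=v$; then $Y=\sJ(P)$ is a Jacobi field with $Y(0)=\sJ(0)(P(0))=0$ and $Y(t_1)=\sJ(t_1)(v)=0$. Moreover $Y$ is nontrivial, since $Y'(0)=\sJ'(0)(P(0))=P(0)$ and $P(0)\neq 0$ (as $P$ is parallel and $P(t_1)=v\neq 0$). This exhibits a point conjugate to $\eta(0)$ along $\eta$, contrary to assumption; hence $\det\sJ(t)\neq 0$ for all $t\in(0,l]$. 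Finally, to upgrade nonvanishing to positivity, I would expand near $t=0$: from $\sJ(0)=0$ and $\sJ'(0)=I_n$ one has $\sJ(t)=tI_n+O(t^2)$, so $\det\sJ(t)=t^n+O(t^{n+1})>0$ for small $t>0$. Since $\det\sJ$ is continuous and nowhere zero on $(0,l]$, it retains this sign, giving $\det\sJ(t)>0$ throughout $(0,l]$. The remaining verifications being forced by the initial conditions and the structural facts already recorded in Section~\ref{sc:curv} and Remark~\ref{rm:Jtensor}, the conjugate-point argument is the heart of the proof.
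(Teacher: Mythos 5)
Your proposal is correct and follows essentially the same route as the paper's proof: the same initial data $\sJ(0)=0$, $\sJ'(0)=I_n$, the same symmetry computation (vanishing derivative of $g_{\dot{\eta}}(Y'_{w_1},Y_{w_2})-g_{\dot{\eta}}(Y_{w_1},Y'_{w_2})$ plus vanishing at $t=0$) to get the Lagrange property, the same contradiction with the no-conjugate-point hypothesis to rule out $\det\sJ=0$, and the same expansion $\det\sJ(t)=\det(tI_n+o(t))>0$ to fix the sign. The only cosmetic difference is that the paper exhibits the Jacobi fields concretely as $Y_w=d(\exp_x)_{tv}(tw)$, whereas you invoke linear ODE existence and uniqueness in a parallel frame of $N_\eta$; by uniqueness of solutions these constructions yield the identical tensor field, and your observation that the kernel condition is automatic from the initial data is a nice touch.
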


\begin{proof}
Let $x=\eta(0)$ and $v=\dot \eta(0)$.
For each $w \in T_xM$, we consider the vector field $Y_w:=d(\exp_x)_{tv}(tw) \in T_{\eta(t)}M$.
By construction it is a Jacobi field along $\eta$ satisfying $Y_w(0)=0$ and $Y'_w(0)=w$,
where we denote by $Y'_w$ the covariant derivative
$D^{\dot{\eta}}_{\dot{\eta}}Y_w$ along $\eta$.

We shall define an endomorphism $\sJ(t):N_{\eta}(t) \lra N_{\eta}(t)$
(in a way similar to Remark~\ref{rm:Jtensor}).
Given $w \in N_{\eta}(t)$, we extend it to the $g_{\dot{\eta}}$-parallel vector field $P$ along $\eta$
(namely $P' \equiv 0$), and then define $\sJ(t)(w):=Y_{P(0)}(t)$.
Note that the image of $\sJ(t)$ is indeed included in $N_{\eta}(t)$, since it follows from
\eqref{eq:g_eta}, \eqref{eq:Jacobi} and Proposition~\ref{pr:R_v} that
\[ \frac{d^2}{dt^2} \big[ g_{\dot{\eta}}(\dot{\eta}, Y_{P(0)}) \big]
 =-g_{\dot{\eta}}\big( \dot{\eta}, R_{\dot{\eta}}(Y_{P(0)}) \big) \equiv 0. \]

Since $P$ is $g_{\dot{\eta}}$-parallel, we have
\[ \sJ'(P) =\sJ(P)' -\sJ(P') =Y'_{P(0)}, \qquad
 \sJ''(P) =(\sJ'(P))' -\sJ'(P') =Y''_{P(0)} =-R_{\dot{\eta}}(Y_{P(0)}). \]
Therefore $\sJ$ satisfies the equation $\sJ''+\sR \sJ=0$.
Since $\eta(0)$ has no conjugate point by hypothesis and $Y_{P(0)}(0)=0$,
the map $\sJ(t)$ has maximum rank and hence invertible for every $t \in (0,l]$.
In particular, $\ker(\sJ(t)) \cap \ker(\sJ'(t)) =\{0\}$ for all $t \in [0,l]$, thus $\sJ$ is a Jacobi tensor field.

Next, we prove that $\sJ^{\sT} \sJ'$
is $g_{\dot \eta}$-symmetric.
To this end, observe that
\[ \frac{d}{dt}\big[ g_{\dot{\eta}}(Y'_{w_1},Y_{w_2}) -g_{\dot{\eta}}(Y_{w_1},Y'_{w_2}) \big]
 =-g_{\dot{\eta}}\big( R_{\dot{\eta}}(Y_{w_1}),Y_{w_2} \big)
 +g_{\dot{\eta}}\big( Y_{w_1},R_{\dot{\eta}}(Y_{w_2}) \big) =0 \]
for $w_1,w_2 \in N_{\eta}(0)$, where we used \eqref{eq:symm}.
Combining this with $Y_{w_1}(0)=Y_{w_2}(0)=0$ yields
$g_{\dot{\eta}}(Y'_{w_1},Y_{w_2})=g_{\dot{\eta}}(Y_{w_1},Y'_{w_2})$.
This shows that $\sJ^{\sT} \sJ'$ is indeed symmetric (and hence $\sJ$ is a Lagrange tensor field)
because, for the $g_{\dot{\eta}}$-parallel vector field $P_i$ with $P_i(0)=w_i$ ($i=1,2$),
\[
g_{\dot{\eta}} \big( P_1,\sJ^{\sT} \sJ'(P_2) \big)
 =g_{\dot{\eta}}(Y_{w_1},Y'_{w_2})=g_{\dot{\eta}}(Y'_{w_1},Y_{w_2})
 =g_{\dot{\eta}} \big( \sJ^{\sT} \sJ' (P_1), P_2 \big).
\]

Finally, we find by construction that $\sJ(0)=0$ and $\sJ'(0)=I_n$,
where $I_n$ is the identity of $N_\eta(0)$.
Thus we obtain, for $t$ sufficiently close to $0$, $\det \sJ(t)=\det(tI_n +o(t))>0$.
By the continuity and non-degeneracy of $\sJ$, $\det\sJ(t)$ is indeed positive for every $t$.
$\qedd$
\end{proof}

\subsection{Comparison theorems}\label{ssc:w-comp}

This subsection is devoted to the (weighted) Lorentz--Finsler analogues of
two fundamental comparison theorems in Riemannian geometry,
the \emph{Bonnet--Myers} and \emph{Cartan--Hadamard theorems}.
We refer to \cite{Ch} for the Riemannian case, \cite{BCS} for the Finsler case,
and to \cite[Chapter~11]{BEE} for the Lorentzian case.

Though we will give precise proofs, it is also possible to reduce those theorems
to the (weighted) Lorentzian setting by using Theorem~\ref{th:curv}.
We refer to \cite{Onlga} for details.

\begin{proposition}[Weighted Bishop inequality] \label{th:wBin} 
Let $\sJ$ be a nonsingular Lagrange tensor field along a timelike geodesic
$\eta:I \lra M$ of unit speed.
Let $N\in (-\infty,0] \cup [n,+\infty]$ and
$\epsilon\in \R$ be in the timelike $\epsilon$-range as in \eqref{ran}.
Defining $\xi:=|{\det \sJ_\psi}|^{c}$ with $c>0$ in \eqref{eq:cNe}, we have on $I$
\[ \xi^{**}\le - c \xi \Ric_N(\eta^*). \]
\end{proposition}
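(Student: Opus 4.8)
The plan is to reduce the entire statement to the weighted Raychaudhuri inequality of Proposition~\ref{pr:Ray-ineq} by computing the logarithmic $\tau_\epsilon$-derivative of $\det\sJ_\psi$. First I would record the basic regularity and sign facts: since $\sJ$ is nonsingular on the interval $I$, the function $\det\sJ_\psi=\e^{-\psi_\eta}\det\sJ$ (an $n\times n$ determinant, as $\sJ_\psi$ acts on the $n$-dimensional $N_\eta$) is smooth and nowhere vanishing, hence of constant sign; consequently $\xi=|\det\sJ_\psi|^{c}$ is a smooth, strictly positive function of $\tau_\epsilon$. Here $c=c(N,\epsilon)>0$ precisely because $\epsilon$ lies in the timelike $\epsilon$-range~\eqref{ran}, exactly as was already observed in Proposition~\ref{bdo}.

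Next I would differentiate. By Jacobi's formula for the determinant, compatible with the covariant derivative via a $g_{\dot\eta}$-parallel orthonormal frame (the same identity already used in the proof of Proposition~\ref{bdo}), one has
\[ \frac{(\det\sJ_\psi)^*}{\det\sJ_\psi} =\trace\big( \sJ_\psi^*\sJ_\psi^{-1} \big) =\trace(\sB_\epsilon) =\theta_\epsilon, \]
using the definitions~\eqref{eq:w-B} of $\sB_\epsilon$ and~\eqref{eq:w-expan} of $\theta_\epsilon$. Taking logarithms in $\xi=|\det\sJ_\psi|^{c}$ then gives $\xi^*/\xi=c\theta_\epsilon$, i.e.\ $\xi^*=c\theta_\epsilon\xi$, and differentiating a second time yields the identity
\[ \xi^{**} =c\theta_\epsilon^*\xi +c\theta_\epsilon\xi^* =c\xi\big( \theta_\epsilon^* +c\theta_\epsilon^2 \big). \]

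With this in hand the statement is immediate. The Raychaudhuri inequality~\eqref{dos} bounds $\theta_\epsilon^*\le -\Ric_N(\eta^*)-\trace(\sigma_\epsilon^2)-c\theta_\epsilon^2$, so rearranging gives $\theta_\epsilon^*+c\theta_\epsilon^2\le -\Ric_N(\eta^*)-\trace(\sigma_\epsilon^2)$. Since $\sigma_\epsilon$ is $g_{\dot\eta}$-symmetric and $g_{\dot\eta}$ is positive-definite on the spacelike subspace $N_\eta$, the operator $\sigma_\epsilon$ is diagonalizable with real eigenvalues, whence $\trace(\sigma_\epsilon^2)\ge0$. Dropping this nonnegative term and multiplying through by $c\xi>0$ produces exactly $\xi^{**}\le -c\xi\Ric_N(\eta^*)$.

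I do not expect a genuine obstacle: once Proposition~\ref{pr:Ray-ineq} is available the argument is a short computation, and the work has essentially been front-loaded into establishing the weighted Riccati and Raychaudhuri relations. The only points demanding care are bookkeeping: confirming the constancy of the sign of $\det\sJ_\psi$ so that $\xi$ is genuinely smooth, tracking that the $\epsilon$-range~\eqref{ran} is what forces $c>0$ (so the exponent defining $\xi$ and the final multiplication both preserve inequalities), and invoking positive-definiteness of $g_{\dot\eta}|_{N_\eta}$ to secure $\trace(\sigma_\epsilon^2)\ge0$. Quoting Proposition~\ref{pr:Ray-ineq} as a black box also sidesteps the $N=n$ versus $N\neq n$ case distinction, since it already covers all admissible $N$ uniformly.
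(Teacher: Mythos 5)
Your proof is correct and follows essentially the same route as the paper's: both compute $\xi^*/\xi = c\,\theta_\epsilon$ from the constant sign of $\det\sJ_\psi$, differentiate once more, and then substitute the weighted Raychaudhuri inequality \eqref{dos} together with $\trace(\sigma_\epsilon^2)\ge 0$. The only differences are cosmetic (you write $\xi^{**}=c\xi(\theta_\epsilon^*+c\theta_\epsilon^2)$ where the paper writes $\xi^{**}\xi-(\xi^*)^2=c\theta_\epsilon^*\xi^2$, and you handle the two sign cases uniformly via the absolute value rather than separately).
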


\begin{proof}
Note that $\sJ$ being nonsingular ensures that $\det \sJ_\psi$ is always positive or always negative.
If $\det \sJ_\psi>0$, then we deduce from $\log \xi =c \log(\det \sJ_\psi)$ that
\[ \frac{\xi^*}{\xi} =c \frac{(\det \sJ_\psi)^*}{\det \sJ_\psi}
 =c \trace(\sJ^*_\psi \sJ_\psi^{-1})=c \trace(\sB_\epsilon)=c \theta_\epsilon. \]
Thus $\xi^{**}\xi-(\xi^*)^2=c\theta_\epsilon^* \xi^2$,
and then the weighted Raychaudhuri inequality \eqref{dos} yields
\[ \xi^{**} \le -c\xi \{ \Ric_N(\eta^*) +\trace(\sigma_\epsilon^2)\} \le -c\xi \Ric_N(\eta^*). \]
In the case of $\det \sJ_\psi<0$, we have $\log \xi =c \log({-\det \sJ_\psi})$
and can argue similarly.
$\qedd$
\end{proof}

An interesting case is $N\in [n,+\infty)$, $\epsilon=1$ and $c=1/N$,
for it corresponds to the usual proper time parametrization
and leads us to the weighted Bonnet--Myers theorem.

We are going to need some auxiliary geometric properties of Finsler spacetimes.
The existence of convex neighborhoods implies that several standard proofs from causality theory,
originally developed for Lorentzian spacetimes, pass unaltered to the Lorentz--Finsler framework
(we refer to \cite{Min-conv}).
An important result is a generalization of the \emph{Avez--Seifert connectedness theorem} as follows
(see \cite[Proposition~6.9]{Min-Ray},
it actually holds under much weaker regularity assumptions on the metric
as in \cite[Theorem~2.55]{Min-causality}).

\begin{theorem}[Avez--Seifert theorem]\label{th:connect}
In a globally hyperbolic Finsler spacetime,
any two causally related points are connected by a maximizing causal geodesic.
\end{theorem}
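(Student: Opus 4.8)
The plan is to prove the Avez--Seifert connectedness theorem (Theorem~\ref{th:connect}) by the classical direct method of the calculus of variations, adapted to the Lorentz--Finsler setting. Fix two causally related points $x \le y$ in the globally hyperbolic Finsler spacetime $(M,L)$, so that the set of future-directed causal curves from $x$ to $y$ is nonempty. The strategy is to maximize the Lorentz--Finsler length functional $\ell(\eta)=\int F(\dot\eta)\,dt$ over this set, show that a maximizer exists, and then argue that any maximizer must be a (reparametrized) causal geodesic. Concretely, first I would recall that under global hyperbolicity the distance function $d$ is finite and continuous (as noted after the definition of $d$, citing \cite[Proposition~6.8]{Min-Ray}), so that $d(x,y)<+\infty$ and a maximizing \emph{value} exists; the work is to realize it by an actual curve.

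The heart of the argument is a compactness step. The key structural input is the compactness of the causal diamond $J^+(x)\cap J^-(y)$, which holds by global hyperbolicity (Definition~\ref{df:causal}). Taking a sequence of causal curves $\eta_k$ from $x$ to $y$ with $\ell(\eta_k) \to d(x,y)$, I would parametrize each by a fixed auxiliary parameter (e.g.\ an arc-length with respect to a background complete Riemannian metric $h$ on $M$) so that the curves have uniformly bounded $h$-length and lie in the compact set $J^+(x)\cap J^-(y)$. This yields a uniform Lipschitz bound, and by an Arzel\`a--Ascoli argument the $\eta_k$ subconverge uniformly to a limit curve $\eta$ from $x$ to $y$. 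One then checks that the limit $\eta$ is again a causal curve (the cone condition $L(\dot\eta)\le 0$ passes to the limit because the causal cones form a closed distribution $\overline{\Omega}$) and, crucially, that the length functional is upper semicontinuous under this convergence, so that $\ell(\eta)\ge \limsup_k \ell(\eta_k)=d(x,y)$. Hence $\eta$ is a maximizer.

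Having produced a maximizing causal curve, the final step is to upgrade its regularity: a locally length-maximizing causal curve coincides, up to reparametrization, with a causal geodesic. This is exactly the local-maximization characterization recorded in the excerpt (``Locally maximizing causal curves coincide with causal geodesics up to reparametrizations,'' citing \cite[Theorem~6]{Min-conv}), which in turn rests on the existence of convex neighborhoods guaranteed by Whitehead's theorem under Definition~\ref{df:FLstr}. Since a global maximizer is a fortiori a local maximizer, $\eta$ is a causal geodesic after reparametrization, completing the proof.

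The main obstacle I expect is the upper semicontinuity of $\ell$ under uniform (or $C^0$) convergence of causal curves, together with the verification that the limit curve is genuinely causal rather than degenerating. Uniform limits of causal curves can a priori lose the strict causal character or accumulate length in subtle ways, so care is needed: the standard remedy is to work with the reverse triangle inequality for $d$ and the semicontinuity of $d$ itself, reducing the length estimate to continuity properties of the distance function rather than handling the integral functional directly. Since the necessary causality-theoretic machinery (continuity of $d$, compactness of causal diamonds, convex neighborhoods, and the geodesic characterization of local maximizers) passes word-for-word from the Lorentzian to the Lorentz--Finsler setting via the existence of convex neighborhoods, this obstacle is handled exactly as in \cite{Min-Ray,Min-conv}, and indeed the result may simply be quoted from \cite[Proposition~6.9]{Min-Ray}.
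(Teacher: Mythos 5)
Your proposal is correct and matches the intended argument: the paper itself gives no proof of Theorem~\ref{th:connect}, but simply quotes it from \cite[Proposition~6.9]{Min-Ray} (and \cite[Theorem~2.55]{Min-causality}), and the proof in that reference is exactly the classical Avez--Seifert scheme you outline — maximizing sequence in the compact diamond $J^+(x)\cap J^-(y)$, limit-curve/Arzel\`a--Ascoli compactness with a background Riemannian parametrization, upper semicontinuity of $\ell$ handled via continuity of $d$, and the upgrade of a maximizer to a geodesic via convex neighborhoods and the local-maximization characterization of \cite[Theorem~6]{Min-conv}. You also correctly identify the genuine technical obstacle (semicontinuity and causality of the limit curve) and the standard remedy, so there is nothing to add.
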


It should be recalled here that in a Finsler spacetime two points connected by a causal curve
which is not a lightlike geodesic are necessarily connected by a timelike curve,
see \cite[Lemma~2]{Min-conv} or \cite[Theorem~2.16]{Min-causality}.
Thus a lightlike curve which is maximizing is necessarily a lightlike geodesic.

We also need the following
(see \cite[Proposition~5.1]{Min-Ray} and also \cite[Theorem~6.16]{Min-Rev}).

\begin{proposition}[Beyond conjugate points]\label{bjs}
In a Finsler spacetime, a causal geodesic $\eta:[a,b] \lra M$ cannot be maximizing
if it contains an internal point conjugate to $\eta(a)$.
Similarly, a causal geodesic $\eta:(a,b) \lra M$ cannot be maximizing
if it contains a pair of mutually conjugate points.
\end{proposition}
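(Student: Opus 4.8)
The plan is to prove the contrapositive by a second-variation argument: if $\eta$ contains such a conjugate point, then its length is not locally maximal among causal curves with the same endpoints, so $\eta$ cannot be maximizing. Along the fixed geodesic I would reduce the Finsler computation to a Lorentzian one: extending $\dot\eta$ to a geodesic vector field $V$ and invoking Proposition~\ref{pr:covd} and Theorem~\ref{th:curv}, the covariant derivative, the curvature endomorphism $\sR=R_{\dot\eta}$, Jacobi fields and conjugate points along $\eta$ all coincide with their counterparts for the osculating metric $g_V$. Since $g_{\dot\eta}$ is the second-order approximation of $2L$ at $\dot\eta$, the second variation of the Lorentz--Finsler length for fixed-endpoint variations agrees with the $g_{\dot\eta}$-second variation and is governed by the index form
\[ I(Y,Y)=\int_a^b \big[ g_{\dot\eta}(Y',Y')-g_{\dot\eta}(R_{\dot\eta}(Y),Y) \big]\,dt, \]
defined on piecewise-smooth fields $Y$ lying in $N_{\eta}$ and vanishing at the endpoints. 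With the curvature convention of \eqref{eq:Jacobi}, maximality of $\eta$ forces $I$ to be positive semi-definite on such fields.

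For the first (timelike) statement, let $\eta(t_0)$ with $t_0\in(a,b)$ be conjugate to $\eta(a)$, so that there is a nontrivial Jacobi field $J$ with $J(a)=J(t_0)=0$. I would take the broken field $\tilde J:=J$ on $[a,t_0]$ and $\tilde J:=0$ on $[t_0,b]$. Integrating by parts on each smooth piece and using $J''+R_{\dot\eta}(J)=0$ from \eqref{eq:Jacobi}, all interior and boundary contributions cancel except the one produced by the corner, giving $I(\tilde J,\tilde J)=0$ and, for a smooth test field $W$,
\[ I(\tilde J,W)=g_{\dot\eta}\big( J'(t_0^-),W(t_0) \big). \]
By uniqueness for \eqref{eq:Jacobi} we have $J'(t_0^-)\neq0$, and since $J'(t_0^-)\in N_{\eta}(t_0)$ is spacelike, choosing $W(t_0)=J'(t_0^-)$ makes $I(\tilde J,W)>0$. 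Then $I(\tilde J+\alpha W,\tilde J+\alpha W)=2\alpha I(\tilde J,W)+\alpha^2 I(W,W)<0$ for small $\alpha<0$, contradicting positive semi-definiteness, hence the maximality of $\eta$. For the second statement, with $\eta(t_1),\eta(t_2)$ mutually conjugate and $(a,b)$ open, I would choose $s_1<t_1$ and $t_2<s_2$, run the same construction with $\tilde J$ equal to the joining Jacobi field on $[t_1,t_2]$ and zero on $[s_1,t_1]\cup[t_2,s_2]$, and derive the contradiction on $[s_1,s_2]$.

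For the null case one first recalls that a lightlike geodesic has $\ell\equiv0$, so it is maximizing exactly when its endpoints are not chronologically related; the relevant variations are taken modulo $\dot\eta$, which is legitimate because $\dot\eta\in N_\eta$ when $\dot\eta$ is lightlike and, by Proposition~\ref{pr:R_v} and \eqref{eq:symm}, $R_{\dot\eta}$ descends to the quotient $N_\eta/\R\dot\eta$. The same broken-Jacobi-field computation shows that past the conjugate point the null index form ceases to be positive semi-definite; the associated variation then produces a neighbouring causal curve with $\ell>0$, i.e.\ a timelike curve between the two endpoints, which makes them chronologically related and destroys the maximizing (achronal) property. That a maximizing lightlike curve is automatically a lightlike geodesic is the fact recalled after Theorem~\ref{th:connect}.

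The main obstacle is the genuinely Finsler bookkeeping in the second-variation reduction: I must verify that the second variation of $\ell=\int F(\dot\eta)\,dt$, with $F=\sqrt{-2L}$, really collapses to the $g_{\dot\eta}$-index form even though $F$ and $g_{\dot\eta}$ agree only to second order along the flagpole direction, and that the null quotient construction is well posed. Once Proposition~\ref{pr:covd}, Theorem~\ref{th:curv}, Proposition~\ref{pr:R_v} and \eqref{eq:symm} are marshalled to make these reductions, the remainder is the classical Lorentzian corner-smoothing argument.
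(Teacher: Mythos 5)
You should note first that the paper never proves Proposition~\ref{bjs} itself: it is imported verbatim from \cite[Proposition~5.1]{Min-Ray} (see also \cite[Theorem~6.16]{Min-Rev}), so the real comparison is with Minguzzi's proof, which your plan parallels. Your timelike half is the classical broken-Jacobi-field argument and is essentially correct; the reduction you flag as the ``main obstacle'' can indeed be closed the way you indicate, because $2L-g_{\dot\eta}(\cdot,\cdot)$ vanishes to second order at $\dot\eta$ (value, first and second derivatives all vanish there by Euler's theorem), so the Lorentz--Finsler length of a fixed-endpoint variation $\zeta_s$ of a unit-speed timelike geodesic differs from its $g_V$-length by $O(s^3)$; the two second variations therefore coincide, Proposition~\ref{pr:covd} and Theorem~\ref{th:curv} identify connection and curvature, and the corner computation $I(\tilde J,W)=g_{\dot\eta}(J'(t_0^-),W(t_0))$ together with the quadratic perturbation in $\alpha$ goes through verbatim.

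The genuine gap is the null case, which you compress into one sentence, and it is not ``the same broken-Jacobi-field computation''. Two distinct problems arise. First, for a lightlike geodesic the functional $\ell=\int F(\dot\eta)\,dt$ has no second variation at $\eta$: $F=\sqrt{-2L}$ is not differentiable on $L^{-1}(0)$, so the quotient index form on $Q_\eta$ cannot be defined as $-\ell''(0)$, and the implication ``maximizing $\Rightarrow$ index form positive semi-definite'' --- the engine of your timelike argument --- has no meaning here; the quotient index form must be introduced by hand and its relation to achronality proved separately. Second, and decisively, your claim that ``the associated variation then produces a neighbouring causal curve with $\ell>0$'' is precisely the hard content of the proposition, and it fails for the naive variation: deforming a lightlike geodesic along a transverse field (broken Jacobi or otherwise) generically yields curves that are spacelike somewhere, hence inadmissible in the definition of $d$, so no contradiction with achronality follows. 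The known proofs (Lorentzian: \cite{HE,ON,BEE}; Lorentz--Finsler: \cite[Proposition~5.1]{Min-Ray}) must prescribe, in addition to the transverse field, a longitudinal component of the variation (equivalently, its second-order part), obtained by solving an auxiliary ODE along $\eta$, exactly so that every varied curve becomes timelike --- with quantitative control of order $s^2$ on $g_{\dot\eta}(\dot\zeta_s,\dot\zeta_s)$, which is also what allows the $g_V$-computation to dominate the $O(s^3)$ Finsler error here. Only after that construction can one invoke the fact recalled after Theorem~\ref{th:connect} that a causal curve which is not a lightlike geodesic can be replaced by a timelike one. As it stands, your proposal proves the timelike half and restates, rather than proves, the null half.
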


Define the \emph{timelike diameter} of a Finsler spacetime $(M,L)$ by
\[ \diam(M) :=\sup\{ d(x,y) \,|\, x,y \in M \}. \]
By the definition of the distance function,
given $x,y \in M$ and any causal curve $\eta$ from $x$ to $y$,
we have $\ell(\eta) \le d(x,y)$.
Hence, if $\diam(M)<\infty$, then every timelike geodesic has finite length
and $(M,L)$ is timelike geodesically incomplete (see \cite[Remark~11.2]{BEE}).

Now we state a weighted Lorentz--Finsler analogue of the Bonnet--Myers theorem.

\begin{theorem}[Weighted Bonnet--Myers theorem]\label{th:wBoMy} 
Let $(M,L,\psi)$ be globally hyperbolic of dimension $n+1 \ge 2$.
If $\Ric_N \ge K$ holds in timelike directions for some $N \in [n,+\infty)$ and $K>0$,
then we have
\[ \diam(M) \le \pi\sqrt{\frac{N}{K}}. \]
\end{theorem}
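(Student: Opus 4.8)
The plan is to argue by contradiction, combining the obstruction to maximization beyond conjugate points (Proposition~\ref{bjs}) with the weighted Bishop inequality (Proposition~\ref{th:wBin}) specialized to the proper-time parametrization $\epsilon=1$, which is exactly the interesting case $c=c(N,1)=1/N$ noted above. So suppose, to the contrary, that $\diam(M)>\pi\sqrt{N/K}$, and choose $x,y \in M$ with $d(x,y)>\pi\sqrt{N/K}$; in particular $d(x,y)>0$, so $x<y$. By global hyperbolicity and the Avez--Seifert theorem (Theorem~\ref{th:connect}), there is a maximizing causal geodesic $\eta$ from $x$ to $y$, and since $\ell(\eta)=d(x,y)>0$ it cannot be a lightlike geodesic, hence it is timelike. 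I reparametrize it by unit speed as $\eta:[0,\ell] \lra M$ with $\ell=d(x,y)$, and fix some $\ell' \in (\pi\sqrt{N/K},\ell)$. Then $\eta|_{[0,\ell']}$ is still maximizing, and every point of $(0,\ell']$ is an interior point of $[0,\ell]$, so by the contrapositive of Proposition~\ref{bjs} there is no point conjugate to $\eta(0)$ on $(0,\ell']$.

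First I would produce a Lagrange tensor field adapted to the point source $\eta(0)$: by Proposition~\ref{cong} applied to $\eta|_{[0,\ell']}$ there is a Lagrange tensor field $\sJ$ with $\sJ(0)=0$, $\sJ'(0)=I_n$ and $\det\sJ(t)>0$ for all $t \in (0,\ell']$. Taking $\epsilon=1$, which lies in the timelike $\epsilon$-range \eqref{ran} for every $N\in[n,+\infty)$, and $c=1/N>0$, the $\epsilon$-proper time $\tau_1$ coincides with $t$, so the $*$-derivative is the ordinary one and $\eta^*=\dot\eta$. Setting $\xi:=|\det\sJ_\psi|^{1/N}$, which is positive on $(0,\ell']$, and using that unit speed gives $\Ric_N(\dot\eta)\ge K$, Proposition~\ref{th:wBin} yields
\[ \xi'' \le -\frac{K}{N}\,\xi \qquad \text{on } (0,\ell']. \]

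Finally I would run a Sturm comparison against $s(t):=\sin(\omega t)$ with $\omega:=\sqrt{K/N}$, so that $s''+\omega^2 s=0$ and $s>0$ on $(0,\pi/\omega)$, noting $\pi/\omega=\pi\sqrt{N/K}<\ell'$. The Wronskian $w:=\xi's-\xi s'$ then satisfies $w'=(\xi''+\omega^2\xi)s\le 0$. The normalization $\sJ(0)=0$, $\sJ'(0)=I_n$ forces $\det\sJ(t)\sim t^n$, hence $\xi(t)\sim C\,t^{n/N}$ with $C>0$ and $n/N\le 1$, from which $w(t)\to 0$ as $t\to 0^+$. Therefore $w\le 0$ on $(0,\pi/\omega)$, so $(\xi/s)'=w/s^2\le 0$ and $\xi/s$ is non-increasing there; in particular it is bounded above by its value at any fixed interior point. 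But as $t\uparrow\pi/\omega$ one has $s(t)\to 0^+$ while $\xi(t)\to\xi(\pi/\omega)>0$, so $\xi/s\to+\infty$, a contradiction. Hence $d(x,y)\le\pi\sqrt{N/K}$ for all $x,y$, giving $\diam(M)\le\pi\sqrt{N/K}$.

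The main obstacle is the endpoint bookkeeping: one must guarantee that $\eta$ has no conjugate point to $\eta(0)$ strictly past $\pi\sqrt{N/K}$, so that the point-source Lagrange tensor of Proposition~\ref{cong} exists and remains nonsingular there. This is why I pass to the slightly shorter interval $[0,\ell']$ and extract the absence of interior conjugate points from maximality via Proposition~\ref{bjs}. The only other delicate input is the boundary behavior $w(0^+)=0$ of the comparison, which relies precisely on the normalization $\det\sJ\sim t^n$ of the congruence emanating from a single point; with that in hand the Sturm argument closes the proof.
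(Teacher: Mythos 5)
Your proposal is correct and follows essentially the same route as the paper: contradiction via the Avez--Seifert theorem (Theorem~\ref{th:connect}), the point-source Lagrange tensor field of Proposition~\ref{cong}, the weighted Bishop inequality (Proposition~\ref{th:wBin}) with $\epsilon=1$ and $c=1/N$, a Sturm comparison against $\sin(t\sqrt{K/N})$, and Proposition~\ref{bjs} to contradict maximality; your use of the contrapositive of Proposition~\ref{bjs} up front on $[0,\ell']$, versus the paper's assuming no conjugate points and invoking it at the end, is only a logical reshuffling.

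The one place where your write-up is looser than the paper is the boundary term at $t=0$. You infer $w(t)=\xi'(t)s(t)-\xi(t)s'(t)\to 0$ from the asymptotic $\xi(t)\sim C t^{n/N}$, but an asymptotic for $\xi$ alone does not control $\xi'$, which is what the term $\xi'(t)s(t)$ requires. The claim is true and fixable: since $\sJ$ is smooth with $\sJ(0)=0$, $\sJ'(0)=I_n$, one has $\sJ(t)=tI_n+O(t^2)$, hence $\det\sJ(t)=t^n\bigl(1+O(t)\bigr)$ and $\theta(t)=n/t+O(1)$, so that $t\xi'(t)=\xi(t)\bigl(n/N+O(t)\bigr)\to 0$. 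The paper sidesteps asymptotics entirely: it only needs the weaker statement $\lim_{t\to 0}t\xi'(t)\le 0$, which it extracts from the concavity of $\xi$ near $0$ (the tangent-line intercept $f(t)=\xi(t)-t\xi'(t)$ is non-decreasing and nonnegative), precisely because $t\xi'(t)$ need not extend differentiably to $t=0$. Either patch closes your argument; with it, your proof is complete.
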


\begin{proof}
Suppose that the claim is not true, then we can find two causally related points $x,y \in M$
such that $d(x,y)>\pi\sqrt{N/{K}}$.
By Theorem~\ref{th:connect}, there is a timelike geodesic $\eta:[0,l] \lra M$ with
$\eta(0)=x$, $\eta(l)=y$, $F(\dot \eta)=1$ and $l=\ell(\eta)=d(x,y)>\pi\sqrt{N/K}$.
We are going to prove that, due to $l>\pi\sqrt{N/{K}}$,
there necessarily exists a conjugate point to $\eta(0)$.
Then Proposition~\ref{bjs} gives the desired contradiction.

Now we assume that there is no conjugate point to $\eta(0)$.
Then Proposition~\ref{cong} applies and we have a Lagrange tensor field $\sJ$
with the properties given there.
Define $\sJ_\psi=\e^{-\psi_\eta/n} \sJ$ and $\xi=({\det \sJ_\psi})^{1/N}$ (i.e., $\epsilon=1$),
and notice that $\xi >0$ for $t>0$.
Then by Proposition~\ref{th:wBin} with $\epsilon=1$, we have 
\begin{equation} \label{gor}
N \xi''(t)\le- \xi(t) {\Ric_N} \big( \dot\eta(t) \big) \le -K\xi(t). 
\end{equation}
Putting $\bs(t):=\sin(t\sqrt{K/N})$, we obtain $(\xi' \bs -\xi \bs')' \le 0$.

Let us prove that $\lim_{t \to 0} (\xi' \bs -\xi \bs')(t) \le 0$,
from which it follows $\xi' \bs -\xi \bs' \le 0$.
Notice that $\xi\in C^2((0,l])\cap C^0([0,l])$ and $\xi(0)=0$ (by $\sJ(0)=0$),
so we need only to prove $\lim_{t \to 0} \xi'(t) t \le 0$ where $\xi'(t) t$ needs not be $C^1$ at $0$.
We deduce from \eqref{gor} that $\xi$ is concave in $t$ near $t=0$.
Let $f(t): =  \xi(t)-t \xi'(t)$ be the ordinate of the intersection
between the tangent to the graph of $\xi$ at $(t,  \xi(t))$ and the vertical axis.
By the concavity of $\xi$, $f$ is non-decreasing in $t$ and $f(t) \ge \xi(0)=0$.
Therefore the limit $\lim_{t \to 0}f(t)$ exists and we obtain
\[
 \lim_{t \to 0} t  \xi'(t) =-\lim_{t \to 0} f(t) \le 0.
\]

Since $\xi' \bs -\xi \bs' \le 0$, the ratio $\xi(t)/\bs(t)$ is non-increasing in $t \in (0,\pi\sqrt{N/K})$.
Hence $\xi(t_0)=0$ necessarily holds at some $t_0 \in (0,\pi\sqrt{N/K}]$.
This contradicts the assumed absence of conjugate points,
therefore we conclude that $\diam(M) \le \pi\sqrt{N/K}$.
$\qedd$
\end{proof}

We remark that the unweighted situation is included in the above theorem as $\psi=0$
and then we have $\diam(M) \le \pi\sqrt{n/K} =\pi\sqrt{(\dim M -1)/K}$.

We end this section by giving a Lorentz--Finsler version of the Cartan--Hadamard theorem,
that we obtain in the unweighted case only.

\begin{theorem}[Cartan--Hadamard theorem]\label{th:CaHa} 
Let $(M,L)$ be a globally hyperbolic Finsler spacetime
whose flag curvature $\bK(v,w)$ is nonpositive for every $v \in \Omega_x$
and linearly independent $w \in T_xM$.
Then every causal geodesic does not have  conjugate points.
\end{theorem}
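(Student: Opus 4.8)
The plan is to analyze Jacobi fields along a causal geodesic $\eta$ and to show that a nontrivial one vanishing at $\eta(a)$ can never vanish a second time, treating the timelike and lightlike cases in parallel by a convexity argument. Global hyperbolicity will play no role here; the statement is a pointwise consequence of the curvature sign. First I would record the information carried by the hypothesis. For timelike $v\in\Omega_x$ the denominator in \eqref{eq:flag} is the Gram determinant of a $2$-plane containing a timelike direction, hence negative, so $\bK(v,w)\le 0$ is equivalent to $g_v(R_v(w),w)\le 0$; since $R_v(v)=0$ by Proposition~\ref{pr:R_v}, this in fact holds for \emph{every} $w\in T_xM$. Next, for a Jacobi field $Y$ with $Y(a)=0$ along a causal geodesic, differentiating twice and using \eqref{eq:g_eta}, \eqref{eq:Jacobi} and $R_{\dot\eta}(\dot\eta)=0$ gives $\frac{d^2}{dt^2}g_{\dot\eta}(Y,\dot\eta)=-g_{\dot\eta}(Y,R_{\dot\eta}(\dot\eta))=0$, so $g_{\dot\eta}(Y,\dot\eta)$ is affine in $t$. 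If $Y$ vanishes at a second point as well, then $g_{\dot\eta}(Y,\dot\eta)\equiv 0$; hence any Jacobi field witnessing a conjugate point is $g_{\dot\eta}$-orthogonal to $\dot\eta$, and it suffices to rule out nontrivial orthogonal Jacobi fields vanishing twice.

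In the timelike case the orthogonal space $N_{\eta}(t)$ is positive-definite, so I would set $f(t):=g_{\dot\eta}(Y,Y)\ge 0$ and compute, via \eqref{eq:g_eta} and \eqref{eq:Jacobi}, $f''=2g_{\dot\eta}(Y',Y')-2g_{\dot\eta}(R_{\dot\eta}(Y),Y)$. Both terms are nonnegative, the first because $Y'\in N_{\eta}$ is spacelike and the second by the curvature sign just established, so $f$ is convex. Since $Y(a)=0$ and $Y\not\equiv 0$ force $Y'(a)\ne 0$ with $Y'(a)$ spacelike, one has $f(a)=f'(a)=0$ and $f''(a)=2g_{\dot\eta}(Y'(a),Y'(a))>0$; thus $f>0$ immediately after $a$, and convexity (so $f'$ is nondecreasing, hence $f$ is nondecreasing beyond that point) keeps $f$ strictly positive thereafter. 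Therefore $Y$ cannot vanish again, and no conjugate point occurs.

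For a lightlike $\eta$ the restriction of $g_{\dot\eta}$ to $\dot\eta^{\perp}$ is degenerate with kernel $\R\dot\eta$, so I would pass to the screen bundle $\dot\eta^{\perp}/\R\dot\eta$. Since $\dot\eta$ is parallel, both $\dot\eta^{\perp}$ and $\R\dot\eta$ are parallel subbundles, so the covariant derivative and the now positive-definite induced metric $\bar g$ descend to the quotient, and the Jacobi equation descends as well. A conjugate-point Jacobi field is orthogonal by the reduction above, and if its screen projection $\bar Y$ vanished identically it would be of the tangential form $(c_1+c_2 t)\dot\eta$, which is affine and hence trivial once it vanishes twice; so $\bar Y$ is a nontrivial screen Jacobi field vanishing at both endpoints. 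The remaining ingredient is the curvature sign on the screen: fixing $x=\eta(t)$ and approaching the lightlike $\dot\eta$ by timelike vectors, continuity of $v\mapsto g_v(R_v(w),w)$ on $\overline{\Omega}\setminus\{0\}$ upgrades the timelike inequality to $g_{\dot\eta}(R_{\dot\eta}(w),w)\le 0$ for the null $\dot\eta$, which then descends to $\bar g$. With $\bar g$ positive-definite and this sign, the same convexity computation for $\bar f(t):=\bar g(\bar Y,\bar Y)$ excludes a second zero of $\bar Y$, contradicting the conjugate point. The main obstacle I expect is precisely this lightlike case: setting up the screen quotient correctly (parallelism of $\dot\eta^{\perp}$ and nondegeneracy of the quotient metric) and justifying the passage of the curvature bound from timelike flagpoles to the null flagpole by continuity, since the flag curvature \eqref{eq:flag} is itself undefined there.
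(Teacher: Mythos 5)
Your proof is correct and follows essentially the same route as the paper's: the affinity of $g_{\dot{\eta}}(Y,\dot{\eta})$ reduces the problem to Jacobi fields orthogonal to $\dot{\eta}$, the nonpositive flag curvature (extended by continuity to null flagpoles, exactly as the paper does) makes $g_{\dot{\eta}}(Y,Y)$ convex, and the degenerate direction along a lightlike geodesic is disposed of by showing $Y=f\dot{\eta}$ with $f''=0$. Your two variations --- closing the timelike case via $f(a)=f'(a)=0$, $f''(a)>0$ instead of the paper's ``nonnegative convex function vanishing at both endpoints is identically zero'', and packaging the null case on the screen quotient $N_{\eta}/\R\dot{\eta}$ rather than the paper's direct dichotomy (spacelike somewhere versus proportional to $\dot{\eta}$) --- are equivalent reformulations of the same argument, not a different approach.
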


We remark that our flag curvature has the opposite sign to \cite{BEE},
thus we are considering the nonpositive curvature (similarly to the Riemannian or Finsler case).

\begin{proof}
Assume that there is a timelike geodesic $\eta:[0,l] \lra M$
and a nontrivial Jacobi field $Y$ along $\eta$ such that $Y(0)$ and $Y(l)$ vanish.
We will denote by $Y'$ the covariant derivative $D^{\dot{\eta}}_{\dot{\eta}}Y$ along $\eta$.
We deduce from
\[ \frac{d^2}{dt^2} \big[ g_{\dot{\eta}}(\dot{\eta},Y) \big]
 =-g_{\dot{\eta}} \big( \dot{\eta},R_{\dot{\eta}}(Y) \big) =0 \]
that $g_{\dot{\eta}}(\dot{\eta}(t),Y(t))$ is affine in $t$, but it vanishes at $t=0,l$.
This implies that $g_{\dot{\eta}}(\dot{\eta},Y) \equiv 0$ and $g_{\dot{\eta}}(\dot{\eta},Y') \equiv 0$.
Hence $Y$ and $Y'$ are $g_{\dot{\eta}}$-spacelike and, in particular,
$g_{\dot{\eta}}(Y,Y) \ge 0$ as well as $g_{\dot{\eta}}(Y',Y') \ge 0$.
The assumption implies $g_{v}\big( w,R_{v}(w) \big) \le 0$ for $v\in \Omega_x$ and $w\in T_xM$
(which by continuity implies the same inequality for $v\in \overline{\Omega}_x$).
Thus we have
\[ \frac{d^2}{dt^2} \big[ g_{\dot{\eta}}(Y,Y) \big]
 =2g_{\dot{\eta}}(Y',Y') -2g_{\dot{\eta}}\big( Y,R_{\dot{\eta}}(Y) \big)
 \ge 0. \]
Therefore $g_{\dot{\eta}}(Y,Y)$ is a nonnegative convex function vanishing at $t=0,l$,
and hence $g_{\dot{\eta}}(Y,Y)=0$.
This implies that $Y$ vanishes on entire $[0,l]$, a contradiction.

For a lightlike geodesic $\eta$, we obtain $g_{\dot{\eta}}(Y,Y)=0$ by the same argument,
and then $Y \equiv 0$ if $Y(t)$ is $g_{\dot{\eta}}$-spacelike at some $t \in (0,l)$.
In the case where $Y$ is always $g_{\dot{\eta}}$-lightlike,
since $g_{\dot{\eta}}(\dot{\eta},Y)=0$,
we have $Y(t)=f(t) \dot{\eta}(t)$ for some function $f$ with $f(0)=f(l)=0$.
Combining this with $f''=0$ following from the Jacobi equation of $Y$, we have $f \equiv 0$.
$\qedd$
\end{proof}

In the Riemannian or Finsler setting,
the absence of conjugate points yields that the exponential map $\exp_x:T_xM \lra M$
is a covering and, if $M$ is simply-connected, $\exp_x$ is a diffeomorphism.
The Lorentzian case is not as simple as such
since Theorem~\ref{th:CaHa} is concerned with only causal geodesics.
See \cite[Section~11.3]{BEE} for further discussions.

\section{Null case}\label{sc:n-Ray}

The arguments in Subsections~\ref{ssc:w-Ricc}--\ref{ssc:cplt}
can be extended to lightlike geodesics.
We will keep the same notations $\tau_\epsilon$ and $c$ for quantities
that are just analogous to those appearing in the timelike case
(compare \eqref{pae} and \eqref{eq:n-cNe} in this section with \eqref{par} and \eqref{eq:cNe},
respectively), hoping that this choice will cause no confusion.

Let $\eta: I \lra M$ be a future-directed lightlike geodesic,
i.e., $L(\dot\eta)=0$ and $\dot\eta \neq 0$.
Then $N_{\eta}(t) \subset T_{\eta(t)}M$ is similarly defined as
the $n$-dimensional subspace $g_{\dot{\eta}(t)}$-orthogonal to $\dot{\eta}(t)$,
but in this case $\dot{\eta}(t) \in N_{\eta}(t)$.
Thus it is convenient to work with the quotient space
\[ Q_\eta(t) :=N_{\eta}(t)/ \dot \eta(t). \]
The metric $g_{\dot \eta}$ induces the positive-definite metric $h$ on this quotient bundle over $\eta$.
It can be shown (see \cite{Min-Ray}) that the covariant derivative $D^{\dot\eta}_{\dot\eta}$
is well defined over this quotient, and it can be extended linearly
over the space of endomorphisms of $Q_\eta(t)$.
It is important to observe that this vector space is $(n-1)$-dimensional,
so its identity $I_{n-1}$ has a trace which equals $n-1$.
This fact explains why in passing from the timelike to the null case
we get the replacements $n \mapsto n-1$ and $N \mapsto N-1$ in several formulas.
Jacobi and Lagrange tensor fields are endomorphisms of this space
but are otherwise defined in the usual way (Definition~\ref{df:Jtensor}).
For instance, a Jacobi tensor field $\sJ$ satisfies
\begin{equation} \label{gsf}
\sJ''+\sR \sJ=0
\end{equation}
(where $'$ is the mentioned covariant derivative on the quotient space)
and $\ker(\sJ(t)) \cap \ker(\sJ'(t)) =\{0\}$
(this 0 belongs to $Q_\eta(t)$, if we work with endomorphisms of $N_{\eta}(t)$
then we would have $\R \dot\eta(t)$ on the right-hand side).
In \eqref{gsf}, $\sR: Q_\eta \lra Q_\eta$ is the $h$-symmetric curvature endomorphism.
Then $\sB:=\sJ'\sJ^{-1}$ is also an $h$-symmetric endomorphism of $ Q_\eta$,
and $\sigma$ and $\theta$ are its trace and traceless parts (similarly to Definition~\ref{df:expan}),
see \cite{Min-Ray} for details.

Analogous to the $\epsilon$-proper time \eqref{par} in the timelike case,
along a lightlike geodesic $\eta$ we define
\begin{equation}\label{pae}
\tau_\epsilon:=\int \e^{\frac{2(\epsilon-1)}{n-1}\psi_\eta(t)} \,dt.
\end{equation}
Similarly to the previous section, we denote by $*$ the (covariant) derivative in $\tau_\epsilon$,
thus $\eta^*(t)=\e^{\frac{2(1-\epsilon)}{n-1}\psi_\eta(t)} \dot\eta(t)$.
The \emph{weighted Jacobi endomorphism}
\[ \sJ_\psi(t) := \e^{-\psi_\eta(t)/(n-1)} \sJ(t) \]
and the \emph{curvature endomorphism}
\[ \sR_{(N,\epsilon)}(t) := \e^{\frac{4(1-\epsilon)}{n-1} \psi_\eta(t)}
 \left\{ \sR(t)+\frac{1}{n-1}\bigg(\psi_\eta''(t)-\frac{\psi'_{\eta}(t)^2}{N-n}\bigg) I_{n-1} \right\} \]
are defined in the same way as well.
Notice that $\trace(\sR_{(N,\epsilon)})=\Ric_N(\eta^*)$.
The same calculation as Lemma~\ref{lm:w-Jacobi} yields the \emph{weighted Jacobi equation}
\[ \sJ_\psi^{**} +\frac{2\epsilon}{n-1} \psi_\eta^* \sJ_\psi^*+ \sR_{(1,\epsilon)} \sJ_\psi=0, \]
where we remark that
$\sR_{(1,\epsilon)}$ is employed instead of $\sR_{(0,\epsilon)}$ in \eqref{eq:w-Jacobi}.

For $t \in I$ where $\sJ(t)$ is invertible, we define
\[ \sB_{\epsilon} :=\sJ_\psi^* \sJ_\psi^{-1}
 =\e^{\frac{2(1-\epsilon)}{n-1}\psi_\eta} \left(\sB -\frac{\psi'_{\eta}}{n-1} I_{n-1}\right). \]
Then the \emph{weighted Riccati equation}
\begin{equation}\label{eq:nw-Ricc}
\sB_\epsilon^* +\frac{2\epsilon}{n-1} \psi_\eta^* \sB_\epsilon+\sB_\epsilon^2 +\sR_{(1,\epsilon)}=0
\end{equation}
is obtained similarly to Lemma~\ref{lm:w-Ricca}.
We also define the \emph{$\epsilon$-expansion scalar}
\[ \theta_{\epsilon}(t) :={\trace} \big( \sB_{\epsilon}(t) \big)
 =\e^{\frac{2(1-\epsilon)}{n-1}\psi_\eta(t)} \big(\theta(t) -\psi'_{\eta}(t)\big) \]
and the \emph{$\epsilon$-shear tensor}
\[ \sigma_\epsilon(t) :=\sB_{\epsilon}(t) -\frac{\theta_{\epsilon}(t)}{n-1} I_{n-1}
 = \e^{\frac{2(1-\epsilon)}{n-1}\psi_\eta(t)} \sigma(t). \]
Taking the trace of the weighted Riccati equation \eqref{eq:nw-Ricc},
we get the \emph{weighted Raychaudhuri equation}, in the same manner as Theorem~\ref{th:wRay}.

\begin{theorem}[Null weighted Raychaudhuri equation]\label{th:wnRay} 
Let $\sJ$ be a nonsingular Lagrange tensor field along a future-directed lightlike geodesic
$\eta:I \lra M$.
Then, for $N=1$, the $\epsilon$-expansion $\theta_\epsilon$ satisfies
\[ \theta_\epsilon^* +\frac{2 \epsilon}{n-1} \psi_\eta^* \theta_\epsilon+\frac{\theta_\epsilon^2}{n-1}
 +\trace(\sigma_\epsilon^2)+ \Ric_1(\eta^*)=0 \]
on $I$.
For $N \in (-\infty,+\infty)\backslash \{1,n\}$, it satisfies
\begin{align*}
\theta_\epsilon^* +\left( 1-\epsilon^2 \frac{N-n}{N-1} \right) \frac{\theta_\epsilon^2}{n-1}
 +\frac{(N-1)(N-n)}{n-1} \left( \frac{\epsilon \theta_\epsilon}{N-1} +\frac{\psi_\eta^*}{N-n} \right)^2&
 \\
 +\trace(\sigma_\epsilon^2) +\Ric_N(\eta^*)
 &=0,
\end{align*}
and for $N=+\infty$ it satisfies
\[ \theta_\epsilon^*+(1-\epsilon^2)\frac{\theta_\epsilon^2}{n-1}
 +\frac{1}{n-1} (\epsilon \theta_\epsilon +\psi_\eta^*)^2
 +\trace(\sigma_\epsilon^2)+ \Ric_\infty(\eta^*) =0. \]
\end{theorem}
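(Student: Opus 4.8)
The plan is to mirror, step for step, the proof of the timelike Theorem~\ref{th:wRay}, carrying out throughout the systematic substitutions $n \mapsto n-1$ and $N \mapsto N-1$ that accompany the passage to the $(n-1)$-dimensional quotient $Q_\eta$. Here the base case is $N=1$ rather than $N=0$, because $\sR_{(1,\epsilon)}$ is the null curvature endomorphism whose trace is already $\Ric_1(\eta^*)$ with no residual $\psi$-correction. First I would take the $h$-trace of the null weighted Riccati equation \eqref{eq:nw-Ricc}. Since the quotient metric $h$, and hence the identity $I_{n-1}$, are parallel along $\eta$, the trace commutes with the $\tau_\epsilon$-derivative, so that $\trace(\sB_\epsilon^*)=\theta_\epsilon^*$ and $\trace(\sB_\epsilon)=\theta_\epsilon$.

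The only genuine computation in the base case is the quadratic term: writing $\sB_\epsilon=\sigma_\epsilon+\frac{\theta_\epsilon}{n-1}I_{n-1}$ with $\trace(\sigma_\epsilon)=0$ gives $\trace(\sB_\epsilon^2)=\trace(\sigma_\epsilon^2)+\frac{\theta_\epsilon^2}{n-1}$, exactly the timelike identity with $n$ replaced by $n-1$. Together with $\trace(\sR_{(1,\epsilon)})=\Ric_1(\eta^*)$ this produces the displayed $N=1$ equation.

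To reach a general $N\notin\{1,n\}$, I would re-express $\Ric_1(\eta^*)$ through $\Ric_N(\eta^*)$. From Definition~\ref{def:RicN}, the $2$-homogeneity of $\Ric_N$, and the chain-rule identity $\psi_\eta^*=\e^{2(1-\epsilon)\psi_\eta/(n-1)}\psi_\eta'$, the difference $\Ric_1(\eta^*)-\Ric_N(\eta^*)$ collapses to a single multiple of $(\psi_\eta^*)^2$, namely $\frac{N-1}{(n-1)(N-n)}(\psi_\eta^*)^2$. Feeding this into the $N=1$ equation leaves three terms to be reorganized — the coefficients of $\theta_\epsilon^2$, of $\theta_\epsilon\psi_\eta^*$, and of $(\psi_\eta^*)^2$ — which I would complete into the square $\frac{(N-1)(N-n)}{n-1}(\frac{\epsilon\theta_\epsilon}{N-1}+\frac{\psi_\eta^*}{N-n})^2$ with residual coefficient $(1-\epsilon^2\frac{N-n}{N-1})\frac{\theta_\epsilon^2}{n-1}$. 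This matching of coefficients is the main (though purely algebraic) obstacle; it is where the $N-1$ and $n-1$ factors must line up correctly. Finally, the $N=+\infty$ formula follows by letting $N\to+\infty$ in the general equation, where $\frac{N-n}{N-1}\to1$ and the squared group tends to $\frac{1}{n-1}(\epsilon\theta_\epsilon+\psi_\eta^*)^2$, recovering the stated limit.
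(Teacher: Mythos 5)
Your proposal is correct and follows exactly the route the paper takes: the paper's proof of Theorem~\ref{th:wnRay} is simply ``take the trace of the weighted Riccati equation \eqref{eq:nw-Ricc}, in the same manner as Theorem~\ref{th:wRay}'', which in the timelike case means tracing the Riccati equation to get the base-case equation, then passing from $\Ric_1$ (there $\Ric_0$) to $\Ric_N$ by the algebraic comparison you spell out, with $N=+\infty$ as a limit or direct computation. Your only addition is to write out explicitly the difference formula $\Ric_1(\eta^*)-\Ric_N(\eta^*)=\frac{N-1}{(n-1)(N-n)}(\psi_\eta^*)^2$ and the completion of the square, both of which check out and are exactly what the paper leaves implicit.
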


Once again the usefulness of these equations stands in the possibility
of controlling the positivity of the coefficient in front of $\theta_\epsilon^2$.
The analogues to Propositions~\ref{pr:Ray-ineq} and \ref{bdo} hold as follows.

\begin{proposition}[Null weighted Raychaudhuri inequality]\label{pr:nRay-ineq} 
Let $\sJ$ be a nonsingular Lagrange tensor field along a lightlike geodesic
$\eta:I \lra M$.
For every $\epsilon \in \R$ and $N\in (-\infty,1) \cup [n,+\infty]$, we have on $I$
\begin{equation} \label{dog}
\theta_{\epsilon}^*
 \le -{\Ric_N}(\eta^*) -\trace(\sigma_{\epsilon}^2) -c \theta_{\epsilon}^2,
\end{equation}
where
\begin{equation}\label{eq:n-cNe}
c =c(N,\epsilon) =\frac{1}{n-1}\left(1-\epsilon^2\frac{N-n}{N-1}\right).
\end{equation}
Moreover, for $\epsilon=0$ one can take $N \to 1$ and \eqref{dog} holds with $c=c(1,0)=1/(n-1)$.
\end{proposition}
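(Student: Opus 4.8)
The plan is to mirror the proof of Proposition~\ref{pr:Ray-ineq} in the null setting: the inequality \eqref{dog} will be read off directly from the null weighted Raychaudhuri equation of Theorem~\ref{th:wnRay} by discarding a single manifestly nonnegative summand. First I would dispose of the generic range $N \in (-\infty,1) \cup (n,+\infty)$, where $N \neq 1,n$ so that the middle equation of Theorem~\ref{th:wnRay} applies. Solving it for $\theta_\epsilon^*$ and recognizing $\bigl(1-\epsilon^2\frac{N-n}{N-1}\bigr)\frac{1}{n-1} = c(N,\epsilon)$ from \eqref{eq:n-cNe}, the only term obstructing \eqref{dog} is
\[ \frac{(N-1)(N-n)}{n-1}\left( \frac{\epsilon\theta_\epsilon}{N-1} +\frac{\psi_\eta^*}{N-n} \right)^2. \]
Since $n-1>0$, this is nonnegative precisely when $(N-1)(N-n)\ge 0$; for $N<1\le n$ both factors are negative and for $N>n\ge 1$ both are positive, so in either case the product is nonnegative and the term may be dropped, yielding \eqref{dog}.

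Next I would treat the two boundary values of $N$. For $N=+\infty$ the last equation of Theorem~\ref{th:wnRay} gives the same conclusion after discarding the nonnegative term $\frac{1}{n-1}(\epsilon\theta_\epsilon+\psi_\eta^*)^2$, with $c(+\infty,\epsilon)=(1-\epsilon^2)/(n-1)$. For the endpoint $N=n$ I would argue by the limit $N\downarrow n$: the left-hand side $\theta_\epsilon^*$ does not depend on $N$, $c(N,\epsilon)\to 1/(n-1)=c(n,\epsilon)$, and by Definition~\ref{def:RicN} one has $\Ric_N(\eta^*)\to\Ric_n(\eta^*)$ as $N\downarrow n$ (the value being $-\infty$ exactly when $\psi_\eta^*\neq 0$). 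Passing to the limit in the inequality already proved for $N>n$ then establishes \eqref{dog} at $N=n$.

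Finally, for the \emph{moreover} clause I would set $\epsilon=0$, so that $c(N,0)=1/(n-1)$ for every $N$, and let $N\to 1$. With $\epsilon=0$ the discarded square reduces to $\frac{(N-1)(\psi_\eta^*)^2}{(n-1)(N-n)}$, which tends to $0$ as $N\to 1$; in the limit the Raychaudhuri equation collapses to the $N=1$ identity of Theorem~\ref{th:wnRay}, so \eqref{dog} holds (in fact with equality) for $c=c(1,0)=1/(n-1)$.

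The only genuinely delicate point is the limit at $N=n$: rather than passing to the limit termwise in the Raychaudhuri equation, where the summand $\psi_\eta^*/(N-n)$ diverges, one must use the correct reading of $\Ric_n$ from Definition~\ref{def:RicN}. When $\psi_\eta^*\neq 0$ this makes the right-hand side of \eqref{dog} equal to $+\infty$ and the inequality trivial, while when $\psi_\eta^*=0$ the divergent summand vanishes and the limit is harmless.
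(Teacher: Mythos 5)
Your proposal is correct and follows essentially the same route as the paper: the paper proves the timelike analogue (Proposition~\ref{pr:Ray-ineq}) by dropping the manifestly nonnegative square term from the Raychaudhuri equations, handling $N=n$ by the limit $N\downarrow n$, and reading the extremal $\epsilon=0$ case directly from the corresponding equation, and it states that the null case is proved in the same manner from Theorem~\ref{th:wnRay} — which is exactly what you do, including the correct sign analysis $(N-1)(N-n)\ge 0$ on the admissible range and the correct resolution of the $N=n$ limit via the convention $\Ric_n=-\infty$ when $\psi_\eta^*\neq 0$.
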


\begin{proposition}[Null $\epsilon$-range for convergence] \label{bdp} 
Given $N \in (-\infty,1] \cup [n,+\infty]$, take $\epsilon \in \R$ such that
\begin{equation} \label{ram}
\epsilon=0 \text{ for } N=1, \quad \vert \epsilon\vert < \sqrt{\frac{N-1}{N-n}} \text{ for } N \ne 1.
\end{equation}
Let $\eta:(a,b) \lra M$ be a lightlike geodesic.
Assume that $\Ric_N({\eta}^*) \ge 0$ holds on $(a,b)$,
and let $\sJ$ be a Lagrange tensor field along $\eta$ such that
for some $t_0 \in (a,b)$ we have  $\theta_\epsilon(t_0)<0$.
Then we have $\det\sJ(t)=0$ for some $t \in [t_0,t_0+s_0]$
provided that $t_0+s_0 <b$,
where $c$ and $s_0$ are from \eqref{eq:n-cNe} and \eqref{eq:s_0}, respectively.

Similarly, if $\theta_\epsilon(t_0)>0$,
then we have $\det\sJ(t)=0$ for some $t \in [t_0+s_0,t_0]$ provided that $t_0+s_0 >a$.
\end{proposition}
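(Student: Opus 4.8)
The plan is to transcribe the proof of Proposition~\ref{bdo} almost verbatim, making throughout the systematic replacement $n \mapsto n-1$, so that the derivative $*$ now refers to the null $\epsilon$-proper time $\tau_\epsilon$ of \eqref{pae}, the constant $c$ is the one in \eqref{eq:n-cNe}, and all endomorphisms act on the $(n-1)$-dimensional quotient $Q_\eta$. The first point to record is that the range \eqref{ram} is precisely the condition guaranteeing $c=c(N,\epsilon)>0$: for $N=1$ only $\epsilon=0$ is admissible and then $c=1/(n-1)>0$, while for $N\neq 1$ the inequality $|\epsilon|<\sqrt{(N-1)/(N-n)}$ is equivalent to $\epsilon^2(N-n)/(N-1)<1$, which makes the factor $1-\epsilon^2(N-n)/(N-1)$ positive (the case $N=n$ being vacuous, and $N=+\infty$ recovering $|\epsilon|<1$).

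Next I would treat the case $\theta_\epsilon(t_0)<0$, for which $s_0>0$. Since $\sigma_\epsilon$ is $h$-symmetric we have $\trace(\sigma_\epsilon^2)\ge 0$; combining this with the hypothesis $\Ric_N(\eta^*)\ge 0$, the null Raychaudhuri inequality \eqref{dog} collapses to
\[ \theta_\epsilon^*\le -c\,\theta_\epsilon^2\le 0. \]
Hence $\theta_\epsilon$ remains strictly negative on $[t_0,b)$ and $[\theta_\epsilon^{-1}]^*=-\theta_\epsilon^{-2}\theta_\epsilon^*\ge c$. Integrating this last inequality against $\tau_\epsilon$ from $t_0$, and using the identity $\theta_\epsilon(t_0)^{-1}=c\big(\tau_\epsilon(t_0)-\tau_\epsilon(t_0+s_0)\big)$ that follows from \eqref{eq:s_0}, yields
\[ \theta_\epsilon(t)\le \frac{1}{c\big(\tau_\epsilon(t)-\tau_\epsilon(t_0+s_0)\big)}<0 \]
for $t\in(t_0,t_0+s_0)$, whence $\theta_\epsilon(t)\to-\infty$ as $t\uparrow t_0+s_0$.

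Finally I would convert this blow-up into the vanishing of $\det\sJ$. On the quotient bundle the identity $\trace\sB=(\det\sJ)'/\det\sJ$ persists, so the null expansion reads $\theta_\epsilon=\e^{\frac{2(1-\epsilon)}{n-1}\psi_\eta}(\det\sJ)'/\det\sJ-\psi_\eta^*$; were $\det\sJ$ nonvanishing on all of $[t_0,t_0+s_0]$ this expression would stay finite, contradicting the divergence just obtained. Thus $\det\sJ(t)=0$ for some $t\in[t_0,t_0+s_0]$, as claimed. The companion case $\theta_\epsilon(t_0)>0$ (where $s_0<0$) is handled by the identical argument run in the reverse time direction.

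The proof carries no genuinely new analytic difficulty beyond Proposition~\ref{bdo}; the only point demanding care is the bookkeeping on the $(n-1)$-dimensional quotient $Q_\eta=N_\eta/\dot\eta$. Concretely, I would verify that the trace/determinant identity $\trace\sB=(\det\sJ)'/\det\sJ$ and the $h$-symmetry of $\sigma_\epsilon$ are read off on $Q_\eta$, where $I_{n-1}$ has trace $n-1$, since it is exactly this reduction of dimension that produces the replacements $n\mapsto n-1$ and $N\mapsto N-1$ already visible in \eqref{pae} and \eqref{eq:n-cNe}. Everything else is formally identical to the timelike computation.
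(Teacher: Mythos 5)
Your proposal is correct and is exactly the proof the paper intends: the paper states Proposition~\ref{bdp} without a written proof, presenting it as the analogue of Proposition~\ref{bdo} obtained by the replacements $n \mapsto n-1$ (working on the quotient $Q_\eta$ with the $h$-symmetry, the null inequality \eqref{dog}, the constant \eqref{eq:n-cNe} and the null proper time \eqref{pae}), which is precisely the transcription you carry out. Your verification that \eqref{ram} is equivalent to $c(N,\epsilon)>0$ (including the $N=1$, $\epsilon=0$ limiting case of Proposition~\ref{pr:nRay-ineq}) and that $\trace\sB=(\det\sJ)'/\det\sJ$ persists on the quotient bundle covers the only points where care is needed.
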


Similarly to Remark~\ref{rm:eps},
note that in \eqref{ram} $\epsilon=0$ is allowed for any $N$,
while $\epsilon=1$ is allowed only for $N \in [n,+\infty)$.
We proceed to the study of completeness conditions.

\begin{definition}[Null $\epsilon$-completeness]\label{df:ne-cplt}
Let $\eta:(a,b) \lra M$ be an inextendible lightlike geodesic.
We say that $\eta$ is \emph{future $\epsilon$-complete} (resp.\ \emph{past $\epsilon$-complete})
if $\lim_{t \to b}\tau_\epsilon(t)=+\infty$ (resp.\ $\lim_{t \to a}\tau_\epsilon(t)=-\infty$).
The spacetime $(M,L,\psi)$ is said to be \emph{future null $\epsilon$-complete}
if every lightlike geodesic is future $\epsilon$-complete, and similar in the past case.
\end{definition}

The next corollary is obtained similarly to Corollary~\ref{mci}.

\begin{corollary} \label{bhd}
Let $N \in (-\infty,1] \cup [n,+\infty]$ and $\sJ$ be a Lagrange tensor field
along a future inextendible lightlike geodesic $\eta:(a,b) \lra M$
satisfying $\Ric_N(\dot\eta) \ge 0$.
Assume that $\eta$ is future $\epsilon$-complete for some $\epsilon \in \R$
that satisfies \eqref{ram}, and that $\theta_\epsilon(t_0)<0$ for some $t_0 \in (a,b)$.
Then $\eta$ develops a point $t \in (t_0,b)$ where $\det \sJ(t)=0$.
\end{corollary}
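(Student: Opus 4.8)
The plan is to mimic the proof of Corollary~\ref{mci}, now invoking the null convergence result Proposition~\ref{bdp} in place of its timelike counterpart. The entire content reduces to verifying that, under future $\epsilon$-completeness, the value $s_0$ defined in \eqref{eq:s_0} can be realized with $0<s_0<b-t_0$, so that the hypothesis $t_0+s_0<b$ of Proposition~\ref{bdp} is satisfied; the curvature and Riccati machinery has already been packaged into that proposition via the null weighted Raychaudhuri inequality \eqref{dog}.

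First I would record that $c=c(N,\epsilon)>0$ whenever $\epsilon$ lies in the null $\epsilon$-range \eqref{ram}. This is a direct inspection of \eqref{eq:n-cNe} case by case in $N$: for $N>n$ and for $N<1$ the bound $|\epsilon|<\sqrt{(N-1)/(N-n)}$ is exactly the condition $\epsilon^2(N-n)/(N-1)<1$, while the boundary values $N=1$, $N=n$, $N=+\infty$ are covered by the corresponding special values of $c$, in complete analogy with the timelike discussion following Proposition~\ref{bdo}. Combined with the standing hypothesis $\theta_\epsilon(t_0)<0$, this gives $-1/\bigl(c\theta_\epsilon(t_0)\bigr)>0$, so the target value
\[ \tau_\epsilon(t_0)-\frac{1}{c\theta_\epsilon(t_0)} \]
is a \emph{finite} number strictly larger than $\tau_\epsilon(t_0)$.

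Next I would use that, by \eqref{pae}, the reparametrized time $\tau_\epsilon$ is continuous and strictly increasing in $t$ (its integrand is positive), while future $\epsilon$-completeness means $\tau_\epsilon(t)\to+\infty$ as $t\to b$. Hence $\tau_\epsilon$ maps $(t_0,b)$ onto $\bigl(\tau_\epsilon(t_0),+\infty\bigr)$, an interval containing the finite target above. Consequently there is a unique $t_0+s_0\in(t_0,b)$ with $\tau_\epsilon(t_0+s_0)=\tau_\epsilon(t_0)-1/\bigl(c\theta_\epsilon(t_0)\bigr)$, which is precisely the relation \eqref{eq:s_0}, and in particular $t_0+s_0<b$ with $s_0>0$. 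Proposition~\ref{bdp} then yields $\det\sJ(t)=0$ for some $t\in[t_0,t_0+s_0]$. Since $\theta_\epsilon(t_0)$ is finite, $\sJ(t_0)$ is invertible, so the vanishing must occur strictly after $t_0$, giving the desired $t\in(t_0,b)$.

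I do not expect a genuine obstacle: the argument is purely about the reach of $\tau_\epsilon$ on $(t_0,b)$, and everything curvature-related is already absorbed into Proposition~\ref{bdp}. The only points requiring a little care are the positivity $c>0$ (which is what forces the target value to lie \emph{above} $\tau_\epsilon(t_0)$, so that $\epsilon$-completeness rather than past behaviour is the relevant input) and the remark that the resulting focal point is interior, i.e.\ strictly later than $t_0$.
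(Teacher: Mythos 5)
Your proposal is correct and follows essentially the same route as the paper: the paper proves Corollary~\ref{bhd} exactly as Corollary~\ref{mci}, namely by observing that future $\epsilon$-completeness guarantees one can find $s_0\in(0,b-t_0)$ with $\theta_\epsilon(t_0)^{-1}=c\bigl(\tau_\epsilon(t_0)-\tau_\epsilon(t_0+s_0)\bigr)$ and then invoking Proposition~\ref{bdp}. Your additional details (the case-by-case check that $c>0$ on the null $\epsilon$-range \eqref{ram}, and the remark that invertibility of $\sJ(t_0)$ forces the vanishing point to lie strictly after $t_0$) are accurate elaborations of what the paper leaves implicit.
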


\section{Incomplete or conjugate}\label{sc:conj}

In this section we show that, under some genericity and convergence conditions,
every timelike or lightlike geodesic is either incomplete or including a pair of conjugate points.
The following notion will play an essential role.

\begin{definition}[Genericity conditions]\label{df:t-gene}
Let $\eta:(a,b) \lra M$ be a timelike geodesic of unit speed.
We say that the \emph{genericity condition} holds along $\eta$
if there exists $t_1 \in (a,b)$ such that $\sR(t_1) \neq 0$,
where  $\sR(t)=R_{\dot{\eta}(t)}:N_{\eta}(t) \lra N_{\eta}(t)$.
We say that $(M,L,\psi)$ satisfies the \emph{timelike genericity condition}
if the genericity condition holds along every inextendible timelike geodesic.
Similarly, we define the \emph{null genericity condition}
where this time we use the curvature endomorphism on the quotient space $Q_\eta$.
We say that $(M,L,\psi)$ satisfies the \emph{causal genericity condition}
if it satisfies both the timelike and null genericity conditions.
\end{definition}

\begin{remark}\label{rm:t-gene}
This is the standard genericity condition for Lorentz--Finsler geometry (see \cite{Min-Ray})
which generalizes that of Lorentzian geometry (see for instance \cite{BEE}).

In the timelike case, we need to introduce a weighted version only in the extremal case $N=0$,
where we replace $\sR$ with $\sR_{(0,0)}$ from \eqref{mfi} similarly to \cite{Ca,WW2},
see Remarks~\ref{rm:N=0}, \ref{ngp} for further discussions.
Also for $N \neq 0$, we could use the weighted version in the next results,
Lemma~\ref{lm:Beem12.14} and Proposition~\ref{pr:Beem12.10},
with no alteration in the conclusions.
This is because in the relevant step of the proof one observes that $\psi_\eta'=0$
and hence all the curvature endomorphisms coincide up to a multiplicative factor.

In the null case, we need a weighted version only in the extremal case $N=1$,
where we replace $\sR$ with $\sR_{(1,0)}$.
Again for $N \neq 1$, we could use the weighted version in the next results
with no alteration in the conclusions.
\end{remark}

\begin{definition}\label{df:L+-}
Let $\eta:(a,b) \lra M$ be an inextendible timelike geodesic of unit speed.
For $t \in (a,b)$, define $L_+(t)$ (resp.\ $L_-(t)$) as the collection of all Lagrange tensor fields
$\sJ$ along $\eta$ such that $\sJ(t)=I_{n}$ and $\theta_1(t) \ge 0$ (resp.\ $\theta_1(t) \le 0$).
\end{definition}

Recall from \eqref{eq:w-expan} that $\theta_1=\theta-\psi'_\eta$
and that $\theta_1(t) \ge 0$ is equivalent to $\theta_\epsilon(t) \ge 0$
regardless of the choice of $\epsilon$.

\begin{lemma}\label{lm:Beem12.14}
Let $N\in (-\infty,0) \cup [n,+\infty]$
and $\eta:(a,b) \lra M$ be an inextendible timelike geodesic of unit speed such that
$\Ric_N(\dot{\eta}) \ge 0$ on $(a,b)$ and $\sR(t_1)\neq 0$ for some $t_1\in\R$.
\begin{enumerate}[{\rm (i)}]
\item \label{it:conj-1}
Suppose that $\eta$ is future $\epsilon$-complete
where $\epsilon\in \R$ belongs to the timelike $\epsilon$-range in \eqref{ran}.
Then, for any $\sJ \in L_-(t_1)$, there exists some $t \in (t_1,b)$ such that $\det\sJ(t)=0$.

\item \label{it:conj-2}
Similarly, if $\eta$ is past $\epsilon$-complete for $\epsilon$ in \eqref{ran},
then for any $\sJ \in L_+(t_1)$ there exists some $t \in (a,t_1)$ such that $\det\sJ(t)=0$.
\end{enumerate}
\end{lemma}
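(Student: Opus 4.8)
The plan is to argue by contradiction and reduce everything to the convergence results already in hand (Corollary~\ref{mci} and Proposition~\ref{bdo}). I treat part~\eqref{it:conj-1}; part~\eqref{it:conj-2} follows by the time-dual argument. Fix $\sJ \in L_-(t_1)$, so $\sJ(t_1)=I_n$ and $\theta_\epsilon(t_1)\le 0$ (recall the sign of $\theta_\epsilon$ is independent of $\epsilon$). Suppose, for contradiction, that $\det\sJ(t)\neq 0$ for every $t\in(t_1,b)$; since $\sJ(t_1)=I_n$ is invertible, the endomorphism $\sB_\epsilon=\sJ_\psi^*\sJ_\psi^{-1}$ and the scalar $\theta_\epsilon=\trace(\sB_\epsilon)$ are then defined on all of $[t_1,b)$.

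First I would exploit the Raychaudhuri inequality \eqref{dos}. Since $\epsilon$ lies in the range \eqref{ran} we have $c=c(N,\epsilon)>0$, and because $\Ric_N(\eta^*)\ge 0$ while $\sigma_\epsilon$ is $g_{\dot\eta}$-symmetric (so $\trace(\sigma_\epsilon^2)\ge 0$), \eqref{dos} yields $\theta_\epsilon^*\le -c\,\theta_\epsilon^2\le 0$. Hence $\theta_\epsilon$ is non-increasing and, starting from $\theta_\epsilon(t_1)\le 0$, stays $\le 0$ on $[t_1,b)$. If $\theta_\epsilon(t_2)<0$ for some $t_2\in[t_1,b)$, then Corollary~\ref{mci} (applied from $t_2$, using future $\epsilon$-completeness) produces a zero of $\det\sJ$ in $(t_2,b)$, contradicting the assumption. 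Thus the only surviving possibility is $\theta_\epsilon\equiv 0$ on $[t_1,b)$.

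The heart of the matter is to rule out this degenerate case via the genericity condition $\sR(t_1)\neq 0$, and this is where $N\neq 0$ is essential. With $\theta_\epsilon\equiv 0$, the inequality \eqref{dos} forces $\Ric_N(\eta^*)\equiv 0$ and $\trace(\sigma_\epsilon^2)\equiv 0$, whence $\sigma_\epsilon\equiv 0$ and so $\sB_\epsilon\equiv 0$ on $[t_1,b)$. From \eqref{eq:w-B} this gives $\sB=\tfrac{\psi_\eta'}{n}I_n$, and then the unweighted Riccati equation \eqref{eq:Ricca} gives $\sR=-\tfrac{1}{n}(\psi_\eta''+\tfrac{(\psi_\eta')^2}{n})I_n$. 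Feeding $\Ric(\dot\eta)=\trace(\sR)$ into Definition~\ref{def:RicN} I expect, for finite $N\neq n$, the identity $\Ric_N(\dot\eta)=-(\psi_\eta')^2\,\tfrac{N}{n(N-n)}$. Since $N\in(-\infty,0)\cup(n,+\infty)$ makes the factor $\tfrac{N}{n(N-n)}$ strictly positive, $\Ric_N\equiv 0$ forces $\psi_\eta'\equiv 0$; the borderline cases $N=+\infty$ (where the same computation gives $\Ric_\infty(\dot\eta)=-(\psi_\eta')^2/n$) and $N=n$ (where $\Ric_n\ge 0$ already forces $\psi_\eta'\equiv 0$ by the very definition of $\Ric_n$) lead to the identical conclusion. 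But $\psi_\eta'\equiv 0$ gives $\sB\equiv 0$, hence $\sR=-\sB'-\sB^2\equiv 0$ on $[t_1,b)$, contradicting $\sR(t_1)\neq 0$.

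For part~\eqref{it:conj-2} I would run the mirror argument on $(a,t_1]$: the same monotonicity gives $\theta_\epsilon\ge\theta_\epsilon(t_1)\ge 0$ there, so either $\theta_\epsilon$ is strictly positive somewhere and the past counterpart of Corollary~\ref{mci} (the second half of Proposition~\ref{bdo} together with past $\epsilon$-completeness) supplies a zero of $\det\sJ$ in $(a,t_1)$, or $\theta_\epsilon\equiv 0$ and the identical rigidity computation contradicts $\sR(t_1)\neq 0$. The main obstacle is precisely this degenerate borderline $\theta_\epsilon\equiv 0$: squeezing $\sigma_\epsilon$, then $\Ric_N$, and finally $\sR$ to zero is the delicate step, and the decisive algebraic input is the strict positivity of $N/(n(N-n))$, which fails exactly at $N=0$—this is why that value is excluded here and must instead be treated with the weighted genericity condition of Remark~\ref{rm:t-gene}.
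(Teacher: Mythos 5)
Your proof is correct, and its skeleton is the same as the paper's: argue by contradiction, use Corollary~\ref{mci} together with the Raychaudhuri inequality \eqref{dos} to reduce to the degenerate case $\theta_\epsilon \equiv 0$, then run a rigidity argument against the genericity hypothesis $\sR(t_1) \neq 0$. Where you genuinely diverge is in how $\psi_\eta' \equiv 0$ is extracted in the degenerate case. The paper reads it off directly from the weighted Raychaudhuri \emph{equation} \eqref{fsp} (or \eqref{fsq}): with $\theta_\epsilon \equiv 0$, the term $\frac{N(N-n)}{n}\bigl(\frac{\psi_\eta^*}{N-n}\bigr)^2 = \frac{N}{n(N-n)}(\psi_\eta^*)^2$ is nonnegative for the admissible $N$ and must vanish along with $\trace(\sigma_\epsilon^2)$ and $\Ric_N(\eta^*)$; for $N=n$ the paper needs a detour, passing to some $N' > n$ and invoking the monotonicity \eqref{eq:mono}. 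You instead use only the \emph{inequality} \eqref{dos} to get $\sigma_\epsilon \equiv 0$ and hence $\sB_\epsilon \equiv 0$, reconstruct $\sB = \frac{\psi_\eta'}{n}I_{n}$ from \eqref{eq:w-B}, feed this into the unweighted Riccati equation \eqref{eq:Ricca} to obtain $\sR$ as an explicit multiple of $I_{n}$, and then take traces through Definition~\ref{def:RicN} to find $\Ric_N(\dot\eta) = -(\psi_\eta')^2 \frac{N}{n(N-n)}$, which must vanish. The decisive algebraic input---strict positivity of $\frac{N}{n(N-n)}$ on $(-\infty,0) \cup (n,+\infty)$---is the same in both arguments, merely repackaged: the paper buries it in the derivation of \eqref{fsp} from \eqref{jsp} in Theorem~\ref{th:wRay}, while you redo that algebra by hand. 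Your route buys two small things: it never needs the full Raychaudhuri equation (only the inequality \eqref{dos}), and its treatment of $N = n$---where $\Ric_n(\dot\eta) \ge 0$ forces $\psi_\eta' \equiv 0$ outright by the very definition of $\Ric_n$---is cleaner than the paper's limiting trick via $N' > n$. Both proofs then conclude identically: $\psi_\eta' \equiv 0$ gives $\sB \equiv 0$, so \eqref{eq:Ricca} forces $\sR \equiv 0$ on the relevant interval, contradicting $\sR(t_1) \neq 0$; and part~\eqref{it:conj-2} is the time-dual mirror in both treatments.
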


\begin{proof}
Since the proofs are similar, we prove only \eqref{it:conj-1}.
The condition $\sJ \in L_-(t_1)$ implies $\theta_\epsilon(t_1) \le 0$.
If there is some $t_0 \ge t_1$ such that $\theta_\epsilon(t_0)<0$,
then Corollary \ref{mci} shows the existence of $t>t_1$ with $\det\sJ(t)=0$.
Thus we assume $\theta_\epsilon(t) \ge 0$ for all $t \ge t_1$.

It follows from the Raychaudhuri inequality \eqref{dos} that $\theta'_{\epsilon}(t) \le 0$,
hence $\theta_{\epsilon}(t)=0$ for all $t \ge t_1$.
Then the Raychaudhuri equation \eqref{fsp} or \eqref{fsq} implies that $\Ric_N(\dot{\eta}(t))=0$,
$\trace(\sigma_\epsilon(t)^2) =0$ and $\psi'_\eta(t)=0$ for all $t \ge t_1$.
(For the case $N=n$, we take $N' \in (n,\infty)$
such that $\epsilon$ belongs to the timelike $\epsilon$-range
of $N'$ and apply \eqref{fsp} for $N'$ with the help of $\Ric_{N'} \ge \Ric_n$ from \eqref{eq:mono}).
Since $\sigma_\epsilon$ is $g_{\dot \eta}$-symmetric, we have $\sigma_\epsilon(t)=0$ for all $t \ge t_1$.
Moreover, we deduce from \eqref{eq:w-expan}, \eqref{eq:w-shear} and \eqref{eq:w-B}
that $\theta(t)=0$, $\sigma(t)=0$ and $\sB(t)=0$ for all $t \ge t_1$.
Then we obtain from the unweighted Riccati equation $\sB'+\sB^2+\sR=0$ in \eqref{eq:Ricca}
that $\sR(t)=0$ for all $t \ge t_1$, a contradiction that completes the proof.
$\qedd$
\end{proof}

\begin{remark}[$N=0$ case]\label{rm:N=0}
In the extremal case of $N=0$ (and hence $\epsilon=0$),
the same argument as Lemma~\ref{lm:Beem12.14} shows $\theta_{\epsilon}(t)=0$
and it implies $\Ric_0(\dot{\eta}(t))=0$,
$\sigma_\epsilon(t)=0$ and $\sB_\epsilon(t)=0$, but not $\psi'_\eta(t)=0$ (see \eqref{jsp}).
Nonetheless, the weighted Riccati equation \eqref{eq:w-Ricc} yields $\sR_{(0,0)}(t)=0$,
therefore we obtain the same conclusion as Lemma~\ref{lm:Beem12.14}
by replacing the hypothesis $\sR(t_1) \neq 0$ with the weighted genericity condition
$\sR_{(0,0)}(t_1) \neq 0$ similar to \cite{Ca,WW2}.
This phenomenon could be compared with Wylie's observation in the splitting theorems:
One obtains the isometric splitting for $N \in (-\infty,0) \cup [n,+\infty]$,
while for $N=0$ only the weaker \emph{warped product splitting} holds true.
We refer to \cite{Wy} for the Riemannian case and \cite{WW2} for the Lorentzian case
(where $N=1$ is the extremal case due to the difference from our notation,
recall Remark~\ref{rm:n+1}).
\end{remark}

The following proposition is the next key step towards singularity theorems.

\begin{proposition}[Generating conjugate points]\label{pr:Beem12.10}
Let $N \in (-\infty,0) \cup [n,+\infty]$ and $\epsilon \in \R$
belong to the timelike $\epsilon$-range in \eqref{ran}.
Let $\eta: (a,b) \lra M$ be an $\epsilon$-complete timelike geodesic
satisfying the genericity condition and $\Ric_N(\dot{\eta}) \ge 0$ on $(a,b)$.
Then $\eta$ necessarily has a pair of conjugate points.
\end{proposition}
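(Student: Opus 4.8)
The plan is to argue by contradiction. Assuming that $\eta$ has \emph{no} pair of conjugate points, I will manufacture a Lagrange tensor field that is nonsingular on all of $(a,b)$, and then contradict this global nonsingularity by applying Lemma~\ref{lm:Beem12.14} together with the $\epsilon$-completeness of $\eta$ in both time directions. The genericity hypothesis $\sR(t_1)\neq 0$ will enter only through that lemma, which is precisely why it is indispensable (cf. Remark~\ref{rm:t-gene}).

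First I would produce, for each source parameter $s\in(a,b)$, the point-source Lagrange tensor $\sJ^{(s)}$ with $\sJ^{(s)}(s)=0$ and $\sJ^{(s)}{}'(s)=I_n$ furnished by Proposition~\ref{cong} (and its time-reversed analogue for the past of $s$). Since by assumption $\eta(s)$ has no conjugate point for \emph{any} $s$, each $\sJ^{(s)}$ stays nonsingular on $(a,b)\setminus\{s\}$. Fixing the generic instant $t_1$ with $\sR(t_1)\neq 0$, I normalize $\hat\sJ^{(s)}:=\sJ^{(s)}\,\sJ^{(s)}(t_1)^{-1}$; right multiplication by a constant invertible matrix preserves \eqref{eq:Jtensor} and \eqref{eq:Ltensor} and leaves $\sB=\sJ'\sJ^{-1}$, hence $\theta_\epsilon$, unchanged, so $\hat\sJ^{(s)}$ is again a Lagrange tensor with $\hat\sJ^{(s)}(t_1)=I_n$. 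The heart of the construction is the \emph{past-stable} tensor $\sJ_-:=\lim_{s\downarrow a}\hat\sJ^{(s)}$. Its existence rests on the Riccati comparison encoded in Propositions~\ref{pr:Ray-ineq} and~\ref{bdo}: as $s$ decreases the symmetric operators $\hat\sJ^{(s)}{}'(t_1)$ are monotone, and they are bounded below because each expansion $\theta_\epsilon^{(s)}(t_1)$ must in fact be strictly positive. Indeed, were $\theta_\epsilon^{(s)}(t_1)\le 0$ for some $s<t_1$, then $\hat\sJ^{(s)}\in L_-(t_1)$ in the sense of Definition~\ref{df:L+-}, and future $\epsilon$-completeness together with Lemma~\ref{lm:Beem12.14}\eqref{it:conj-1} would force $\det\sJ^{(s)}$ to vanish at some $t>t_1$, i.e.\ a point conjugate to $\eta(s)$, contrary to the standing assumption.

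Having produced $\sJ_-$, nonsingular on all of $(a,b)$, with $\sJ_-(t_1)=I_n$ and limiting expansion $\theta_-(t_1)=\lim_{s\downarrow a}\theta_\epsilon^{(s)}(t_1)\ge 0$, I would close the argument through the sign of $\theta_-(t_1)$. Because $\theta_-(t_1)\ge 0$, the tensor $\sJ_-$ lies in $L_+(t_1)$, so past $\epsilon$-completeness and Lemma~\ref{lm:Beem12.14}\eqref{it:conj-2} (which uses $\sR(t_1)\neq 0$) yield a parameter $t<t_1$ with $\det\sJ_-(t)=0$, contradicting the global nonsingularity of $\sJ_-$. Equivalently, one may run a clean dichotomy directly on $\sJ_-$ without first establishing positivity: if $\theta_\epsilon(t_1)<0$ one invokes Corollary~\ref{mci} to obtain a future zero of $\det\sJ_-$, if $\theta_\epsilon(t_1)>0$ the past analogue of Proposition~\ref{bdo} gives a past zero, and only the borderline value $\theta_\epsilon(t_1)=0$ genuinely requires the genericity hypothesis through Lemma~\ref{lm:Beem12.14}, which matches the discussion in Remarks~\ref{rm:t-gene} and~\ref{rm:eps}. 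In every case the nonsingularity of $\sJ_-$ is violated, so the contradiction establishes the claimed pair of conjugate points.

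The step I expect to be the main obstacle is the existence and \emph{global} nonsingularity of the stable tensor $\sJ_-$, especially on the past interval $(a,t_1)$: one must show that the limiting $\epsilon$-expansion $\theta_-$ does not blow down before the past end. Convergence at $t_1$ is controlled by the monotonicity-plus-lower-bound argument above, but propagating nonsingularity across all of $(a,b)$ needs the full \emph{matrix} Riccati comparison for the $\epsilon$-modified curvature endomorphism $\sR_{(0,\epsilon)}$ of \eqref{mfi}, not merely its trace. Here the nonnegative shear term $\trace(\sigma_\epsilon^2)$ in the weighted Raychaudhuri inequality \eqref{dos} and, above all, the sign control $c=c(N,\epsilon)>0$ from \eqref{eq:cNe} guaranteed by the admissible $\epsilon$-range \eqref{ran} are exactly what keep the comparison one-sided and prevent spurious focusing. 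Once this standard but delicate Riccati input is secured, the remainder is the bookkeeping of the classes $L_\pm(t_1)$ and the two halves of Lemma~\ref{lm:Beem12.14}.
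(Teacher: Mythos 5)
Your strategy is essentially the paper's proof run backwards in time, and its skeleton is sound. The paper builds the \emph{future}-stable tensor $\sD=\lim_{s\to b}\sD_s$ (via Lemmas~\ref{lm:Beem12.12} and~\ref{lm:Beem12.13}), uses its nonsingularity on $(t_1,b)$ together with Lemma~\ref{lm:Beem12.14}\eqref{it:conj-1} to force $\theta_1(t_1)>0$, transfers this by continuity to a finite $\sD_s$, and then applies Lemma~\ref{lm:Beem12.14}\eqref{it:conj-2} to that finite $\sD_s$, whose zero at some $t_2<t_1$ yields the conjugate pair $(\eta(t_2),\eta(s))$. You do the mirror image. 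Your positivity step for the finite past-source tensors $\hat{\sJ}^{(s)}$, $s<t_1$, is correct and even slightly slicker than the paper's: since the source $s$ lies on the opposite side of $t_1$ from the zero produced by Lemma~\ref{lm:Beem12.14}\eqref{it:conj-1}, that zero at some $t>t_1$ immediately gives a conjugate pair $(\eta(s),\eta(t))$, so no limiting tensor is needed for this half. The limit tensor is instead indispensable for the other half, exactly as in the paper: a zero of $\det\sJ^{(s)}$ in $(a,t_1)$ produced by Lemma~\ref{lm:Beem12.14}\eqref{it:conj-2} could simply be $t=s$, so one must push the source to the endpoint before applying that lemma.

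The genuine gap is the construction of $\sJ_-$. You attribute its existence (operator monotonicity of $\hat{\sJ}^{(s)}{}'(t_1)$) and its nonsingularity to ``Riccati comparison encoded in Propositions~\ref{pr:Ray-ineq} and~\ref{bdo}'', but those are statements about the trace $\theta_\epsilon$ only; they cannot deliver matrix-level monotonicity, nor nonsingularity of the limit. What fills this hole is precisely the pair of results you never cite: Lemmas~\ref{lm:Beem12.12} and~\ref{lm:Beem12.13}, i.e.\ the representation $\sD_s(t)=\sJ(t)\int_t^s(\sJ^{\sT}\sJ)(r)^{-1}\,dr$ and the limit argument built on it. These are stated in the paper for the future direction, so your construction needs their time-reversed analogues (available, e.g., through the reverse structure of Remark~\ref{rm:reverse}, or by rerunning the proofs). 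Your trace lower bound --- $\theta_\epsilon^{(s)}(t_1)>0$, hence traces bounded below, hence a monotone family of symmetric operators converges --- is a legitimate substitute for the boundedness part of that lemma, but the monotonicity itself remains unproved in your write-up. Two further corrections: the nonsingularity you can (and need to) obtain for $\sJ_-$ is only on $(a,t_1)$, not on all of $(a,b)$ as you claim, because the representation formula degenerates across the normalization point $t_1$; fortunately $(a,t_1)$ is all your final contradiction via Lemma~\ref{lm:Beem12.14}\eqref{it:conj-2} uses. For the same reason, your ``clean dichotomy'' variant breaks down in the case $\theta_\epsilon(t_1)<0$: there Corollary~\ref{mci} produces a zero of $\det\sJ_-$ in $(t_1,b)$, a region where you have no nonsingularity to contradict, so that branch should be deleted and the argument run only through the sign $\theta_-(t_1)\ge 0$ you established first.
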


To prove the proposition,
we need two lemmas on Lagrange tensor fields shown in the same way
as the Lorentzian setting.
Indeed, everything can be calculated in terms of $g_{\dot{\eta}}$,
thereby one can follow the same lines as \cite[Lemmas~12.12, 12.13]{BEE}.

\begin{lemma}\label{lm:Beem12.12}
Let $\eta:(a,b) \lra M$ be a timelike geodesic of unit speed having no conjugate points.
Take $t_1 \in (a,b)$ and let $\sJ$ be the unique Lagrange tensor field along $\eta$
such that $\sJ(t_1)=0$ and $\sJ'(t_1)=I_{n}$.
Then, for each $s \in (t_1,b)$,
the Lagrange tensor field $\sD_s$ with $\sD_s(t_1)=I_{n}$ and $\sD_s(s)=0$
satisfies the equation
\begin{equation}\label{eq:D_s}
\sD_s(t) =\sJ(t)\int_t^s (\sJ^{\sT} \sJ)(r)^{-1} \,dr
\end{equation}
for all $t \in (t_1,b)$.
Moreover, $\sD_s(t)$ is nonsingular for all $t \in (t_1,s)$.
\end{lemma}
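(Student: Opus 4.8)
The plan is to show that the right-hand side of \eqref{eq:D_s} \emph{is} the required Lagrange tensor field, and then to force equality with $\sD_s$ by a uniqueness argument that uses the absence of conjugate points. Since $\eta$ has no conjugate points, Proposition~\ref{cong} guarantees that $\sJ(t)$ is nonsingular for every $t \in (t_1,b)$, so $(\sJ^{\sT}\sJ)^{-1}$ is well defined there and the candidate
\[ \widetilde{\sD}_s(t):=\sJ(t)\int_t^s (\sJ^{\sT}\sJ)(r)^{-1}\,dr \]
makes sense on $(t_1,b)$. Because all objects live in the bundle $N_{\eta}$ carrying the positive-definite metric $g_{\dot\eta}$, and the covariant derivative along $\eta$ is metric compatible (recall \eqref{eq:g_eta}), every manipulation below is formally identical to the Lorentzian computation in \cite[Lemma~12.12]{BEE}, with all transposes taken with respect to $g_{\dot\eta}$.

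The key step is to verify $\widetilde{\sD}_s''+\sR\widetilde{\sD}_s=0$. Writing $A(t)=\int_t^s(\sJ^{\sT}\sJ)^{-1}$, so that $A'=-(\sJ^{\sT}\sJ)^{-1}$, and using the identity $\sJ(\sJ^{\sT}\sJ)^{-1}=(\sJ^{\sT})^{-1}$ together with $\sJ''=-\sR\sJ$ from \eqref{eq:Jtensor}, a direct double differentiation gives
\[ \widetilde{\sD}_s''+\sR\widetilde{\sD}_s =\bigl[-\sJ'\sJ^{-1}+(\sJ^{\sT})^{-1}(\sJ')^{\sT}\bigr](\sJ^{\sT})^{-1}. \]
The bracket vanishes precisely because $\sJ$ is a Lagrange tensor field: where $\sJ$ is invertible, condition \eqref{eq:Ltensor} is equivalent to the $g_{\dot\eta}$-symmetry of $\sB=\sJ'\sJ^{-1}$, i.e. $(\sJ^{\sT})^{-1}(\sJ')^{\sT}=\sJ'\sJ^{-1}$. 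Recognizing that the Lagrange property is exactly what produces this cancellation is the main (if short) obstacle. Evaluating at $t=s$ then yields $\widetilde{\sD}_s(s)=0$ and $\widetilde{\sD}_s'(s)=-(\sJ^{\sT})^{-1}(s)$, which is invertible; hence $\widetilde{\sD}_s$ extends to a genuine Jacobi tensor field on $(a,b)$ (the kernel condition holds at $s$ and propagates), and it is Lagrange since \eqref{eq:Ltensor} holds trivially at $s$, where $\widetilde{\sD}_s(s)=0$, and therefore at all $t$ by Remark~\ref{rm:Jtensor}(b).

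Next I would identify the boundary value at $t_1$ and conclude. Using $\sJ(t_1)=0$ and $\sJ'(t_1)=I_{n}$ I write $\sJ(t)=(t-t_1)\sJ_1(t)$ with $\sJ_1$ smooth and $\sJ_1(t_1)=I_{n}$, so the integrand equals $(r-t_1)^{-2}(\sJ_1^{\sT}\sJ_1)^{-1}(r)$; a short expansion shows that the genuinely singular $(r-t_1)^{-2}$ term produces the leading contribution $I_{n}$ after multiplication by the prefactor $\sJ(t)$, while the remaining $O((r-t_1)^{-1})$ terms are killed by the factor $(t-t_1)$, giving $\lim_{t\to t_1}\widetilde{\sD}_s(t)=I_{n}$. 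Thus $\widetilde{\sD}_s$ meets the two boundary conditions characterizing $\sD_s$. Here the no-conjugate-point hypothesis is used decisively for uniqueness: if $\sD_s$ and $\widetilde{\sD}_s$ both solve the Jacobi equation with the same values at $t_1$ and $s$, then for every $g_{\dot\eta}$-parallel field $P$ the difference applied to $P$ is a Jacobi field vanishing at $t_1$ and $s$, hence identically zero, so $\sD_s=\widetilde{\sD}_s$ and \eqref{eq:D_s} follows.

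Finally, for the nonsingularity on $(t_1,s)$: since $g_{\dot\eta}$ is positive-definite on $N_{\eta}$ and $\sJ(r)$ is invertible, $\sJ^{\sT}\sJ$ is $g_{\dot\eta}$-positive-definite for each $r$, hence so is its inverse and so is $\int_t^s(\sJ^{\sT}\sJ)^{-1}\,dr$ whenever $t<s$. Being the product of the invertible map $\sJ(t)$ with this invertible integral, $\sD_s(t)$ is nonsingular for all $t\in(t_1,s)$, which completes the plan.
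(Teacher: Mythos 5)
Your proposal is correct, and its core coincides with the paper's proof: you build the same candidate $\sJ(t)\int_t^s(\sJ^{\sT}\sJ)^{-1}\,dr$, verify that it solves the Jacobi equation through exactly the cancellation coming from the Lagrange property \eqref{eq:Ltensor} of $\sJ$, observe that it is Lagrange because it vanishes at $s$, and prove nonsingularity on $(t_1,s)$ from the positive-definiteness of $(\sJ^{\sT}\sJ)^{-1}$. Where you genuinely diverge is in the identification of the candidate with $\sD_s$. The paper matches \emph{Cauchy data at the single point $s$}: it first derives the conserved Wronskian-type quantity $(\sJ^{\sT})'\sD_s-\sJ^{\sT}\sD'_s\equiv I_n$ (constant because its derivative vanishes by the symmetry \eqref{eq:symm}, and evaluated at $t_1$ using $\sJ(t_1)=0$, $\sJ'(t_1)=\sD_s(t_1)=I_n$), which yields $\sD'_s(s)=-(\sJ^{\sT})^{-1}(s)$; since the candidate has the same value and derivative at $s$, ordinary ODE uniqueness finishes the argument, and the no-conjugate-points hypothesis is used only to make $\sJ$ invertible so that the integral is defined. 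You instead match \emph{boundary values at the two endpoints}: you compute $\lim_{t\to t_1}\widetilde{\sD}_s(t)=I_n$ by the local expansion $\sJ(t)=(t-t_1)\sJ_1(t)$ (your estimate is right --- the $O((r-t_1)^{-1})$ remainder integrates to a logarithm, which the prefactor $(t-t_1)$ kills), and then invoke uniqueness of the two-point boundary value problem, which is precisely where absence of conjugate points enters decisively. Both routes are sound; the paper's Wronskian trick is purely algebraic and avoids any asymptotic analysis at $t_1$, while your version makes the boundary behaviour at $t_1$ explicit at the cost of the singular-integral estimate, and it uses the no-conjugate-points hypothesis in a more essential (and arguably more geometric) way.
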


\begin{proof}
Recall that $\sJ'$ means $D_{\dot\eta}^{\dot\eta} \sJ$.
Note first that by the standard ODE theory
the Jacobi tensor field $\sJ$ is uniquely determined by the boundary condition
$\sJ(t_1)=0$ and $\sJ'(t_1)=I_n$.
Moreover, $\sJ(t_1)=0$ ensures that $\sJ$ is a Lagrange tensor field
(recall Remark~\ref{rm:Jtensor}).

The endomorphism in the right-hand side of \eqref{eq:D_s},
\[ \mathsf{X}(t) :=\sJ(t)\int_t^s (\sJ^{\sT} \sJ)(r)^{-1} \,dr, \qquad t \in (t_1,b), \]
is well defined since $\eta$ has no conjugate points and $\sJ(t_1)=0$.
We shall see that $\mathsf{X}$ is a Lagrange tensor field satisfying the same boundary condition
as $\sD_s$ at $s$, which implies $\sD_s=\mathsf{X}$.
The condition $\mathsf{X}''+\sR \mathsf{X} =0$ for $\mathsf{X}$ being a Jacobi tensor field
is proved by using the symmetry \eqref{eq:Ltensor} for $\sJ$.
Since $\mathsf{X}(s)=0$ clearly holds, $\mathsf{X}$ is indeed a Lagrange tensor field.
Moreover, we deduce from $[(\sJ^{\sT})' \sD_s -\sJ^{\sT} \sD'_s]' \equiv 0$
(by the symmetry \eqref{eq:symm}), $\sJ(t_1)=0$ and $\sJ'(t_1)=\sD_s(t_1)=I_{n}$ that
$(\sJ^{\sT})' \sD_s -\sJ^{\sT} \sD'_s \equiv I_{n}$.
Hence
\[ \mathsf{X}'(s) =-\sJ(s) \cdot (\sJ^{\sT} \sJ)(s)^{-1}
 =-\sJ^{\sT}(s)^{-1} =\sD'_s(s). \]
Therefore we obtain $\sD_s=\mathsf{X}$.
The nonsingularity for $t \in (t_1,s)$ is seen by noticing that $(\sJ^{\sT} \sJ)(r)^{-1}$ is positive-definite.
$\qedd$
\end{proof}

\begin{lemma}\label{lm:Beem12.13}
Let $\eta:(a,b) \lra M$ be a timelike geodesic of unit speed without conjugate points.
For $t_1 \in (a,b)$ and $s \in (t_1,b)$,
let $\sJ$ and $\sD_s$ be the Lagrange tensor fields as in Lemma~$\ref{lm:Beem12.12}$.
Then $\sD(t):=\lim_{s \to b} \sD_s(t)$ exists and is a Lagrange tensor field along $\eta$
such that $\sD(t_1)=I_n$ and $\sD'(t_1)=\lim_{s \to b} \sD'_s(t_1)$.
Moreover, $\sD(t)$ is nonsingular for all $t \in (t_1,b)$.
\end{lemma}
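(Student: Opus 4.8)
The plan is to construct $\sD$ directly from the explicit representation furnished by Lemma~\ref{lm:Beem12.12},
\[ \sD_s(t) =\sJ(t) \int_t^s (\sJ^{\sT} \sJ)(r)^{-1} \,dr, \]
so that the whole question reduces to controlling the matrix integral $\int_t^s (\sJ^{\sT}\sJ)(r)^{-1}\,dr$ as $s \to b$. Since all computations take place on the positive-definite subbundle $N_{\eta}$ and use only $g_{\dot{\eta}}$, the symmetry \eqref{eq:symm} of $\sR$, and the Jacobi equation, the argument can be run exactly along the lines of \cite[Lemma~12.13]{BEE}.

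First I would record the monotonicity of $A_s:=\sD_s'(t_1)$. For $t_1<s_1<s_2<b$ the representation gives $\sD_{s_2}-\sD_{s_1}=\sJ\,C$ with the operator $C:=\int_{s_1}^{s_2}(\sJ^{\sT}\sJ)^{-1}\,dr$, constant in $t$, symmetric and positive-definite. Differentiating in $t$ and evaluating at $t_1$, where $\sJ(t_1)=0$ and $\sJ'(t_1)=I_{n}$, yields $A_{s_2}-A_{s_1}=C>0$. Hence $s\mapsto A_s$ is monotone increasing in the sense of $g_{\dot{\eta}}$-symmetric operators, and $A_s$ converges as $s\to b$ as soon as it is bounded above.

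The crux is precisely this uniform upper bound, and it is where the global absence of conjugate points is used. Choosing $t_0\in(a,t_1)$, let $\mathsf{K}$ be the (necessarily Lagrange) Jacobi tensor field with $\mathsf{K}(t_0)=0$ and $\mathsf{K}'(t_0)=I_{n}$; since $\eta$ has no conjugate points, $\mathsf{K}$ is nonsingular on $(t_0,b)\supseteq[t_1,b)$, so $\sB_{\mathsf{K}}:=\mathsf{K}'\mathsf{K}^{-1}$ is a finite $g_{\dot{\eta}}$-symmetric field there. I would then invoke the classical index-form identity (valid here because $N_{\eta}$ is positive-definite and $\sR$ is $g_{\dot{\eta}}$-symmetric): for any field $Y$ in $N_{\eta}$ along $\eta$ with $Y(t_1)=w$ and $Y(s)=0$, writing $Y=\mathsf{K}Z$,
\[ \int_{t_1}^s\big[g_{\dot{\eta}}(Y',Y')-g_{\dot{\eta}}(\sR Y,Y)\big]\,dt = g_{\dot{\eta}}(\sB_{\mathsf{K}}Y,Y)\Big|_{t_1}^s +\int_{t_1}^s g_{\dot{\eta}}(\mathsf{K}Z',\mathsf{K}Z')\,dt. \]
Applying this to $Y=\sD_s P$, for the $g_{\dot{\eta}}$-parallel field $P$ with $P(t_1)=w$, the left-hand side integrates by parts (using $\sD_s''=-\sR\sD_s$ and $Y(s)=0$) to $-g_{\dot{\eta}}(A_s w,w)$, while the right-hand side is bounded below by $-g_{\dot{\eta}}(\sB_{\mathsf{K}}(t_1)w,w)$. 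Thus $A_s\le \sB_{\mathsf{K}}(t_1)$ for all $s$, and being monotone and bounded, $A:=\lim_{s\to b}A_s$ exists.

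Finally I would assemble the conclusions. From $A_{s}-A_{s_1}=\int_{s_1}^{s}(\sJ^{\sT}\sJ)^{-1}\,dr$ it follows that $\int_t^b(\sJ^{\sT}\sJ)^{-1}\,dr$ converges for every $t\in(t_1,b)$; by continuous dependence on the initial data $(\sD_s(t_1),\sD_s'(t_1))=(I_n,A_s)\to(I_n,A)$, the tensors $\sD_s$ converge in $C^{\infty}_{\loc}$ to the Jacobi tensor field $\sD$ with $\sD(t_1)=I_n$ and $\sD'(t_1)=A$, and passing to the limit in Lemma~\ref{lm:Beem12.12} yields $\sD(t)=\sJ(t)\int_t^b(\sJ^{\sT}\sJ)^{-1}\,dr$ on $(t_1,b)$. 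The symmetry \eqref{eq:Ltensor} passes to the limit, so $\sD$ is Lagrange. Nonsingularity is then immediate: for $t\in(t_1,b)$ the factor $\sJ(t)$ is invertible (no conjugate points) and $\int_t^b(\sJ^{\sT}\sJ)^{-1}\,dr$ is positive-definite, whence $\det\sD(t)\ne 0$. The main obstacle is the uniform upper bound on $A_s$, which is exactly the step forcing one to use disconjugacy on all of $(a,b)$ rather than just on $(t_1,b)$; the monotonicity and the closing nonsingularity argument are routine once the representation formula is available.
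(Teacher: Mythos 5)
Your proof is correct and is essentially the paper's own approach: the paper's proof simply invokes \cite[Lemma~12.13]{BEE} ``by replacing $a$ in that proof with any $a' \in (a,t_1)$,'' and your argument---monotonicity of $A_s=\sD_s'(t_1)$ from the representation formula, the uniform upper bound $A_s \le \mathsf{K}'(t_1)\mathsf{K}(t_1)^{-1}$ via the index-form comparison with the Lagrange tensor field $\mathsf{K}$ vanishing at $t_0\in(a,t_1)$, then passage to the limit and nonsingularity from positive-definiteness of $\int_t^b (\sJ^{\sT}\sJ)^{-1}\,dr$---is precisely that argument written out in detail. In particular, your use of disconjugacy on all of $(a,b)$ rather than just $(t_1,b)$ is exactly the paper's prescribed replacement of $a$ by an interior point $a'=t_0$.
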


\begin{proof}
We can argue along the lines of \cite[Lemma 12.13]{BEE}
(by replacing $a$ in that proof with any $a' \in (a, t_1)$ in our notation)
and find that $\lim_{s \to b} \sD'_s(t_1)$ exists and $\sD$ is the Lagrange tensor field
such that $\sD(t_1)=I_{n}$ and $\sD'(t_1)=\lim_{s \to b} \sD'_s(t_1)$,
represented as
\[ \sD(t)=\sJ(t)\int_t^{b} (\sJ^{\sT} \sJ)(r)^{-1} \,dr, \qquad t \in (t_1,b). \]
The nonsingularity is shown in the same way as Lemma~\ref{lm:Beem12.12}.
$\qedd$
\end{proof}

We are ready to prove Proposition~\ref{pr:Beem12.10}.
Notice that we will use both \eqref{it:conj-1} and \eqref{it:conj-2} of Lemma~\ref{lm:Beem12.14},
so that both the future and past $\epsilon$-completenesses are required.

\begin{prooof}
Suppose that $\eta$ has no conjugate points and fix $t_1 \in (a,b)$ such that $\sR(t_1) \neq 0$.
Let $\sD:=\lim_{s \to b} \sD_s$ be the Lagrange tensor field given in Lemma~\ref{lm:Beem12.13},
and $\theta_1(t)$ be the $1$-expansion associated to $\sD$.
Thanks to Lemma~\ref{lm:Beem12.14}\eqref{it:conj-1} and the nonsingularity of $\sD$ on $(t_1,b)$,
we have $\sD \not\in L_-(t_1)$ and hence $\theta_1(t_1)>0$.
Since $\sD(t_1)=\lim_{s \to b} \sD_s(t_1)$ and $\sD'(t_1)=\lim_{s \to b} \sD'_s(t_1)$,
$\theta_1(t_1)>0$ still holds for $\sD_s$ with sufficiently large $s>t_1$.
Then it follows from Lemma~\ref{lm:Beem12.14}\eqref{it:conj-2} that there exists $t_2<t_1$
such that $\det\sD_s(t_2)=0$.

Now, take $v \in N_{\eta}(t_2) \setminus \{0\}$ with $\sD_s(t_2)(v)=0$ and
let $P$ be the $g_{\dot{\eta}}$-parallel vector field along $\eta$ with $P(t_2)=v$.
Then, $Y:=\sD_s(P)$ is a Jacobi field (recall Remark~\ref{rm:Jtensor}) and we have
\[ Y(t_2)=\sD_s(t_2)(v)=0, \qquad Y(s)=0, \qquad Y(t_1)=P(t_1) \neq 0. \]
Therefore $\eta(s)$ is conjugate to $\eta(t_2)$, a contradiction.
This completes the proof.
$\qedd$
\end{prooof}

An analogous proof gives the following result for null geodesics.

\begin{proposition}\label{pr:Beem12.17}
Let $N \in (-\infty,1) \cup [n,+\infty]$ and
$\epsilon \in \mathbb{R}$ belong to the null $\epsilon$-range in \eqref{ram}.
Let $\eta: (a,b) \lra M$ be an $\epsilon$-complete lightlike geodesic
satisfying the genericity condition and $\Ric_N(\dot{\eta}) \ge 0$ on $(a,b)$.
Then $\eta$ necessarily has a pair of conjugate points.
\end{proposition}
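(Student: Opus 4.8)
The plan is to mirror, step by step, the proof of Proposition~\ref{pr:Beem12.10}, working throughout on the quotient bundle $Q_\eta$ in place of the normal bundle $N_\eta$ and performing the systematic replacements $n \mapsto n-1$ and $N \mapsto N-1$ dictated by $\dim Q_\eta(t) = n-1$. First I would record the null counterparts of the three auxiliary lemmas. The counterpart of Definition~\ref{df:L+-} declares $L_\pm(t_1)$ to consist of Lagrange tensor fields $\sJ$ on $Q_\eta$ with $\sJ(t_1) = I_{n-1}$ and $\theta_1(t_1) \ge 0$ (resp.\ $\le 0$); as before, $\theta_1(t_1) \ge 0$ is equivalent to $\theta_\epsilon(t_1) \ge 0$ for every admissible $\epsilon$. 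The counterparts of Lemmas~\ref{lm:Beem12.12} and \ref{lm:Beem12.13} transfer essentially verbatim, since their content is purely formal: the identity $\sD_s(t) = \sJ(t)\int_t^s (\sJ^{\sT}\sJ)(r)^{-1}\,dr$ and the existence of $\sD := \lim_{s \to b}\sD_s$ rely only on the induced positive-definite metric $h$ on $Q_\eta$ and on the symmetry relations \eqref{eq:Ltensor}, \eqref{eq:symm}, all of which hold on the quotient.

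The heart of the matter is the null analogue of Lemma~\ref{lm:Beem12.14}, and this is the step I expect to require the most care. I would use Corollary~\ref{bhd} (the null replacement of Corollary~\ref{mci}) to dispose of the case $\theta_\epsilon(t_0) < 0$ for some $t_0 \ge t_1$, and otherwise assume $\theta_\epsilon \ge 0$ on $[t_1,b)$. The null Raychaudhuri inequality \eqref{dog} then forces $\theta_\epsilon^* \le 0$, hence $\theta_\epsilon \equiv 0$ on $[t_1,b)$. Feeding $\theta_\epsilon \equiv 0$ into the null Raychaudhuri equation of Theorem~\ref{th:wnRay} yields $\Ric_N(\dot\eta) = 0$, $\trace(\sigma_\epsilon^2) = 0$ and $\psi_\eta' = 0$; for the boundary value $N = n$ I would instead apply the equation for a nearby $N' \in (n,\infty)$ whose null $\epsilon$-range still contains $\epsilon$, using $\Ric_{N'} \ge \Ric_n$ from \eqref{eq:mono}. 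Symmetry of $\sigma_\epsilon$ with respect to $h$ gives $\sigma_\epsilon = 0$, whence $\theta = 0$, $\sigma = 0$ and $\sB = 0$, and finally the unweighted null Riccati equation $\sB' + \sB^2 + \sR = 0$ on $Q_\eta$ delivers $\sR \equiv 0$ on $[t_1,b)$, contradicting the genericity $\sR(t_1) \neq 0$. The delicate point is checking that excluding the extremal value $N = 1$ (the range being $(-\infty,1) \cup [n,+\infty]$) is exactly what guarantees that the quadratic term $\tfrac{(N-1)(N-n)}{n-1}(\cdots)^2$ in Theorem~\ref{th:wnRay} pins down $\psi_\eta' = 0$, so that the ordinary (unweighted) genericity condition suffices, as explained in Remark~\ref{rm:t-gene}.

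With these lemmas in hand, the main argument copies that of Proposition~\ref{pr:Beem12.10}. Assuming $\eta$ has no conjugate points, I would fix $t_1$ with $\sR(t_1) \neq 0$ and form $\sD = \lim_{s \to b}\sD_s$. The nonsingularity of $\sD$ on $(t_1,b)$ together with the future part of the null Lemma~\ref{lm:Beem12.14} forces $\sD \notin L_-(t_1)$, i.e.\ $\theta_1(t_1) > 0$; by the convergence $\sD_s(t_1) \to \sD(t_1)$ and $\sD'_s(t_1) \to \sD'(t_1)$ the same strict inequality persists for $\sD_s$ with $s$ close to $b$. The past part of the lemma then produces $t_2 < t_1$ with $\det \sD_s(t_2) = 0$. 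Choosing $v \in Q_\eta(t_2) \setminus \{0\}$ in the kernel and extending it to an $h$-parallel field $P$, the Jacobi field $Y := \sD_s(P)$ on $Q_\eta$ vanishes at $t_2$ and at $s$ while $Y(t_1) = P(t_1) \neq 0$; thus $\eta(s)$ is conjugate to $\eta(t_2)$ in the null sense, the desired contradiction. Since the geodesic is both future and past $\epsilon$-complete, both halves of the lemma are available, and the contradiction completes the proof.
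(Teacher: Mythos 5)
Your proposal is correct and takes essentially the same approach as the paper: the paper proves Proposition~\ref{pr:Beem12.17} only by remarking that ``an analogous proof'' to that of Proposition~\ref{pr:Beem12.10} applies, and your write-up supplies exactly that analogy, with the right quotient-bundle replacements $n \mapsto n-1$ on $Q_\eta$, the $N' \downarrow n$ device for the boundary case $N=n$, and the correct observation that excluding $N=1$ makes the coefficient $(N-1)(N-n)$ positive so that $\theta_\epsilon \equiv 0$ forces $\psi_\eta'=0$ and the unweighted genericity condition suffices.
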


We summarize the outcomes of Propositions~\ref{pr:Beem12.10} and \ref{pr:Beem12.17}
by using the following notion.

\begin{definition}[Convergence conditions]\label{df:t-conv}
We say that $(M,L,\psi)$ satisfies the \emph{timelike $N$-convergence condition}
(resp.\ the \emph{null $N$-convergence condition})
for $N \in (-\infty,+\infty] $ if we have $\Ric_N(v) \ge 0$ for all timelike vectors $v \in \Omega$
(resp.\ for all $v \in \partial {\Omega}$).
\end{definition}

By  continuity, the timelike $N$-convergence condition
is equivalent to $\Ric_N(v) \ge 0$ for all causal vectors $v \in \overline{\Omega}$,
so it can also be called the \emph{causal $N$-convergence condition}.

\begin{theorem}\label{th:Beem12.18}
Let $(M,L,\psi)$ be a Finsler spacetime of dimension $n +1 \ge 2$,
satisfying the timelike genericity and timelike $N$-convergence conditions
for some $N \in (-\infty,0) \cup [n,+\infty]$.
Then every future-directed timelike geodesic is either including a pair of conjugate points
or $\epsilon$-incomplete for any $\epsilon \in \R$ belonging to the timelike $\epsilon$-range \eqref{ran}.
\end{theorem}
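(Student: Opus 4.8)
The plan is to read this theorem as the global, contrapositive packaging of Proposition~\ref{pr:Beem12.10}: the per-geodesic hypotheses of that proposition (genericity along $\eta$ and $\Ric_N(\dot\eta)\ge 0$ along $\eta$) are precisely what Definitions~\ref{df:t-gene} and~\ref{df:t-conv} guarantee along \emph{every} inextendible timelike geodesic, so the only remaining input is the $\epsilon$-completeness assumption, whose failure is exactly $\epsilon$-incompleteness. Concretely, I would first fix an arbitrary $\epsilon\in\R$ in the timelike $\epsilon$-range \eqref{ran} and take any inextendible future-directed timelike geodesic $\eta:(a,b)\lra M$, reparametrizing it by proper time so that it has unit speed, $F(\dot\eta)\equiv 1$. (A general timelike geodesic is a segment of such an inextendible one, and $\epsilon$-completeness is only defined for inextendible geodesics via \eqref{par}, so this is the natural setting.)

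Next I would check that the two standing hypotheses of Proposition~\ref{pr:Beem12.10} hold automatically. The timelike $N$-convergence condition asserts $\Ric_N(v)\ge 0$ for all timelike $v\in\Omega$; evaluating along $\eta$ gives $\Ric_N(\dot\eta(t))\ge 0$ for every $t\in(a,b)$. The timelike genericity condition asserts, along every inextendible timelike geodesic, the existence of some $t_1$ with $\sR(t_1)\neq 0$; applied to our $\eta$ this supplies the genericity hypothesis. Thus the sole hypothesis of Proposition~\ref{pr:Beem12.10} not yet verified is that $\eta$ be $\epsilon$-complete.

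The argument then proceeds by a clean dichotomy on this last hypothesis. If $\eta$ is $\epsilon$-complete, meaning both future and past $\epsilon$-complete as required in the proof of Proposition~\ref{pr:Beem12.10}, then that proposition applies verbatim and produces a pair of conjugate points along $\eta$. If instead $\eta$ fails to be $\epsilon$-complete, then by Definition~\ref{df:e-cplt} it is $\epsilon$-incomplete. These two cases are exhaustive, so $\eta$ either contains a pair of conjugate points or is $\epsilon$-incomplete; since $\epsilon$ was an arbitrary admissible value, the conclusion holds for every $\epsilon$ in the range \eqref{ran}.

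I do not expect a genuine obstacle here, since all the analytic content has been absorbed into Proposition~\ref{pr:Beem12.10} and the Lagrange-tensor lemmas preceding it. The points requiring care are purely organizational: confirming that the reduction to unit speed changes neither the conjugacy of points nor the $\epsilon$-proper time $\tau_\epsilon$ in any essential way, pinning down that \emph{$\epsilon$-complete} in Proposition~\ref{pr:Beem12.10} means both future and past completeness so that its logical negation is exactly \emph{$\epsilon$-incomplete}, and noting that for $N=n$ the range \eqref{ran} is unrestricted. I would also observe, following Remark~\ref{rm:t-gene}, that for $N\in(-\infty,0)\cup[n,+\infty]$ the ordinary genericity condition $\sR(t_1)\neq 0$ may be used without passing to any weighted version, since the weighted replacement would alter the conclusion only at the extremal value $N=0$, which is excluded here.
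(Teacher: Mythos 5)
Your proposal is correct and is exactly the paper's (implicit) argument: the paper states Theorem~\ref{th:Beem12.18} as a direct summary of Proposition~\ref{pr:Beem12.10}, whose hypotheses along each inextendible timelike geodesic are supplied by the timelike genericity and $N$-convergence conditions, with the dichotomy ``$\epsilon$-complete (hence conjugate points) versus $\epsilon$-incomplete'' being precisely the intended reading, since $\epsilon$-incompleteness means failure of at least one of future/past $\epsilon$-completeness. Your organizational caveats (unit-speed reparametrization, inextendible extension, exactness of the negation, unrestricted range at $N=n$) are all consistent with the paper's conventions.
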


By the \emph{$\epsilon$-incompleteness}, we mean that (at least)
one of the future and past $\epsilon$-completenesses fails.
In the null case we have similarly the next result.

\begin{theorem}\label{vjx}
Let $(M,L,\psi)$ be a Finsler spacetime of dimension $n +1 \ge 3$,
satisfying the null genericity and null $N$-convergence conditions
for some $N \in (-\infty,1) \cup [n,+\infty]$.
Then every future-directed lightlike geodesic is either including a pair of conjugate points
or $\epsilon$-incomplete for any $\epsilon \in \R$ belonging to the null $\epsilon$-range \eqref{ram}.
\end{theorem}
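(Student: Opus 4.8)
The plan is to obtain this theorem directly from Proposition~\ref{pr:Beem12.17} by contraposition, in exact parallel with how its timelike counterpart Theorem~\ref{th:Beem12.18} reduces to Proposition~\ref{pr:Beem12.10}. First I would fix an inextendible future-directed lightlike geodesic $\eta:(a,b) \lra M$; this is the relevant class of geodesics, since $\epsilon$-completeness in Definition~\ref{df:ne-cplt} is defined only for inextendible geodesics. The two standing hypotheses then specialize to $\eta$: the null $N$-convergence condition gives $\Ric_N(\dot{\eta}) \ge 0$ on $(a,b)$ (as $\dot{\eta}(t)$ is lightlike throughout), while the null genericity condition supplies some $t_1$ with $\sR(t_1) \neq 0$ on the quotient bundle $Q_\eta$. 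I would also note that the range $N \in (-\infty,1) \cup [n,+\infty]$ excludes the extremal value $N=1$, so by Remark~\ref{rm:t-gene} the unweighted genericity condition $\sR \neq 0$ is the correct one and no weighted substitute is needed here.

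Next I would run the contrapositive. Assume $\eta$ contains no pair of conjugate points and fix any $\epsilon \in \R$ in the null $\epsilon$-range \eqref{ram}. If $\eta$ were $\epsilon$-complete, meaning both future and past $\epsilon$-complete, then the hypotheses just verified would allow me to invoke Proposition~\ref{pr:Beem12.17}, which produces a pair of conjugate points on $\eta$ and contradicts the assumption. Hence $\eta$ must fail at least one of future or past $\epsilon$-completeness, i.e., $\eta$ is $\epsilon$-incomplete in the sense explained after Theorem~\ref{th:Beem12.18}. As $\epsilon$ ranges freely over the null $\epsilon$-range, this gives the stated dichotomy.

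I do not expect a genuine obstacle here: all of the analytic substance, namely the null weighted Raychaudhuri inequality \eqref{dog}, the construction and limiting behaviour of the Lagrange tensor fields $\sD_s$, and the blow-up mechanism forcing $\det \sJ$ to vanish, is already encapsulated in Proposition~\ref{pr:Beem12.17}. The only points demanding attention are bookkeeping: confirming that the dimension hypothesis $n+1 \ge 3$ keeps the quotient $Q_\eta$ nontrivial so that the null machinery of Section~\ref{sc:n-Ray} applies, checking that the admissible $\epsilon$-range \eqref{ram} invoked here coincides with the one required by Proposition~\ref{pr:Beem12.17}, and correctly unwinding $\epsilon$-incompleteness as the failure of two-sided $\epsilon$-completeness.
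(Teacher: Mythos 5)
Your proposal is correct and is exactly the paper's route: the paper states Theorem~\ref{vjx} (together with Theorem~\ref{th:Beem12.18}) as a direct summary of Proposition~\ref{pr:Beem12.17} (resp.\ Proposition~\ref{pr:Beem12.10}), i.e., precisely the contraposition you carry out, with the same bookkeeping that ``$\epsilon$-complete'' means two-sided completeness and that $N\neq 1$ permits the unweighted genericity condition.
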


\begin{remark}[Extremal cases]\label{ngp}
Due to Remark~\ref{rm:N=0},
when $N=0$ in the timelike case or $N=1$ in the null case,
we have the analogues to Theorems~\ref{th:Beem12.18}, \ref{vjx}
under the modified genericity conditions $\sR_{(0,0)}(t_1) \neq 0$
or $\sR_{(1,0)}(t_1) \neq 0$ at some $t_1 \in (a,b)$.
\end{remark}

\section{Singularity theorems}\label{sc:sing}

We finally discuss several singularity theorems derived from the results in the previous sections
(recall Subsection~\ref{com} for the general strategy).
Our presentation follows \cite{Min-Ray} based on causality theory (see also \cite{AJ}).
We also refer to \cite{Min-causality,Min-Rev} for singularity theorems in causality theory.
Recall Subsection~\ref{ssc:causal} for some notations in causality theory.

\subsection{Trapped surfaces}\label{ssc:trap}

We first introduce the notion of trapped surfaces.
Let $S \subset M$ be a co-dimension $2$, orientable, compact $C^2$-spacelike submanifold
without boundary.
By this we mean that for each $x \in S$, $T_x S \cap \overline{\Omega}_x =\{0\}$.
By the convexity of the cone $\overline{\Omega}_x$
there are exactly two hyperplanes $H_x^{\pm} \subset T_xM$ containing $T_xS$
and tangent to $\overline{\Omega}_x$.
These hyperplanes determine two future-directed lightlike vectors $v^{\pm}$
in the sense that $H_x^\pm$ intersects $\overline{\Omega}_x$
in the ray $\R_+ v^{\pm}$.
This fact can be seen as a consequence of the bijectivity of the Legendre map,
and we have $H_x^{\pm}=\ker g_{v^\pm}(v^\pm, \cdot)$
(see \cite[Proposition~3]{Min-cone} and also \cite[Proposition~5.2]{AJ}).
A $C^1$-choice of the vector field $v^\pm$ over $S$ will be denoted by $V^\pm$.
It exists by the orientability provided that the spacetime is orientable in a neighborhood of $S$,
and is uniquely determined up to a point-wise rescaling, $V^\pm \mapsto f V^\pm$, with $f>0$.

Now we consider the \emph{geodesic congruence} generated by $V^+$,
namely the family of lightlike geodesics emanating from $S$ with the initial condition $V^+$.
Let $\eta: [0,b) \lra M$, with $x:=\eta(0) \in S$ and $\dot\eta(0)=V^+(x)$, be one such geodesic.
Then we consider the Jacobi tensor field $\sJ$ along $\eta$ associated with the geodesic congruence,
namely $\sJ(0)=I_{n-1}$ and $\sJ'(0)(w)= D^{V^+}_w V^+$ for each $w \in Q_\eta(0)$.
(We remark that this is an endomorphism left unchanged by the above rescaling
(thereby well defined), i.e., invariant under the replacements $w \mapsto w+f V^+(x)$.
Hence it is enough to consider $w \in T_xS$).
More intuitively, given $w \in T_xS$ and the $g_{\dot\eta}$-parallel vector field $P$ with $P(0)=w$,
the Jacobi field $Y_w:=\sJ(P)$ satisfies $Y_w(0)=w$ and $Y'_w(0)=D^{V^+}_w V^+$
so that $Y_w$ is the variational vector field of a geodesic variation
$\zeta:[0,b) \times (-\ve,\ve) \lra M$ such that $\zeta(0,\cdot)$ is a curve in $S$ with
$\del_s\zeta(0,0)=w$ and $\zeta(\cdot,s)$ is the geodesic with initial vector $V^+(\zeta(0,s))$
for each $s$.

One can show that $\sJ$ is in fact a Lagrange tensor field
(see \cite[Section~4]{Min-Ray}, and this could be compared with the symmetry
of the Hessian in the positive-definite case as in \cite[Lemma~2.3]{OSbw}).
That is, let $w_1,w_2\in T_xS$ and extend them to two vector fields $W_1,W_2$ tangent to $S$ and
commuting at $x$ (i.e., $[W_1,W_2](x)=0$).
Next extend them to a neighborhood $U$ of $S$ with no focal points.
Let us also extend $V^+$ to a vector field on $U$,
and let us keep the same notations for the extended fields.
Since $W_i$ is tangent to $S$, we have $\del_{w_i} g_{V^+}(V^+, W_j)=0$ for $i,j=1,2$.
Then it follows from \eqref{eq:g_X} and $[W_1,W_2](x)=0$ that
\[
g_{V^+}(w_1, D^{V^+}_{w_2} V^+) =-g_{V^+}(D^{V^+}_{w_2}W_1,  V^+)
 =-g_{V^+}(D^{V^+}_{w_1}W_2,  V^+) =g_{V^+}(w_2, D^{V^+}_{w_1} V^+).
\]
This together with $\sJ(0)=I_{n-1}$ implies the symmetry of $\sB=\sJ'\sJ^{-1}$,
and hence the Lagrange property for $\sJ$ (recall Remark~\ref{rm:Jtensor}).

\emph{Focal points} of $S$ are those at which $\det \sJ=0$.
In Lorentz--Finsler geometry it has been proved in \cite[Proposition~5.1]{Min-Ray}
(see \cite[Theorem~6.16]{Min-Rev} for the analogous Lorentzian proof)
that every geodesic of the congruence including a focal point
necessarily enters the set $I^+(S)$ defined in Subsection~\ref{ssc:causal}
(this result does not use the weight and so passes to our case).
A future \emph{lightlike $S$-ray} is a future inextendible, lightlike geodesic $\eta: [0,b) \lra M$
such that $\eta(0) \in S$ and $d(S,\eta(t))=\ell(\eta\vert_{[0,t]})$ for all $t \in (0,b)$.
Then $\eta$ issues necessarily orthogonally from $S$, and does not intersect $I^+(S)$.
Note also that, if every geodesic of the congruence develops a focal point,
then there are no future lightlike $S$-rays.

The \emph{expansion} $\theta^+:S \lra \R$ of $S$ will be the expansion
of the geodesic congruence defined by
\[ \theta^+(x) :=\trace(\sJ' \sJ^{-1})(0)=\trace(w \mapsto D^{V^+}_w V^+), \]
where $\sJ$ is the Lagrange tensor field along the geodesic $\eta$
with $\dot{\eta}(0)=V^+(x)$ as above.
The right-hand side can be interpreted as the trace of the shape operator of $S$.
Similarly, define the \emph{$\epsilon$-expansion} $\theta^+_\epsilon :S \lra \R$ of $S$ by
\[ \theta_\epsilon^+(x) :=\trace(\sJ_\psi^* \sJ_\psi^{-1})(0)
 =\e^{\frac{2(1-\epsilon)}{n-1}\psi_\eta(0)} \big( \theta^+(x) -\psi'_{\eta}(0) \big). \]
The factor on the right-hand side is in most cases of no importance,
since what really matters is the sign of $\theta_\epsilon^+$.
For instance the constant $\epsilon$ does not appear in the next definition.
We define $\theta^-$ and $\theta^-_\epsilon$ associated with $V^-$ in the same manner.

\begin{definition}[Trapped surfaces]\label{jbp}
We say that $S$ is a \emph{$\psi$-trapped surface}
if $\theta_1^+ <0$ and $\theta_1^- <0$ on $S$.
\end{definition}

By the null Raychaudhuri equation of Theorem~\ref{th:wnRay},
more precisely by Corollary~\ref{bhd}, we obtain the following.

\begin{proposition}\label{vip}
Let $(M,L,\psi)$ be a Finsler spacetime of dimension $n +1 \ge 3$,
satisfying the null  $N$-convergence condition
for some $N \in (-\infty,1] \cup [n,+\infty]$.
Let $S$ be a $\psi$-trapped surface.
Then every lightlike $S$-ray is necessarily future $\epsilon$-incomplete
for any $\epsilon \in \R$ that belongs to the null $\epsilon$-range \eqref{ram}.
\end{proposition}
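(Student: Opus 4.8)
The plan is to argue by contradiction, assembling the two curvature-dependent ingredients in the structure of Subsection~\ref{com}: the convergence statement of Corollary~\ref{bhd} (Step~I), which produces a focal point along a future $\epsilon$-complete ray, and the focal-point result of \cite[Proposition~5.1]{Min-Ray} (Step~II), by which a focal point forces the geodesic into $I^+(S)$. Since an $S$-ray cannot meet $I^+(S)$, future $\epsilon$-completeness becomes impossible.

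First I would fix a future lightlike $S$-ray $\eta:[0,b) \lra M$ and suppose, toward a contradiction, that it is future $\epsilon$-complete for some $\epsilon$ in the null $\epsilon$-range \eqref{ram}. Because $\eta$ issues orthogonally from $S$, the initial vector $\dot{\eta}(0)$ is a positive multiple of $V^+(\eta(0))$ or of $V^-(\eta(0))$; after rescaling $V^+$ (say) so that $V^+(\eta(0))=\dot{\eta}(0)$, the ray $\eta$ is a geodesic of the congruence generated by $V^+$, and I would take the associated Lagrange tensor field $\sJ$ along $\eta$ with $\sJ(0)=I_{n-1}$ from Subsection~\ref{ssc:trap}. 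Since $S$ is $\psi$-trapped, Definition~\ref{jbp} gives $\theta_1^+(\eta(0))<0$; as the $\epsilon$-expansion and the $1$-expansion differ only by the positive factor $\e^{2(1-\epsilon)\psi_\eta/(n-1)}$ (so that their sign is independent of $\epsilon$ and of the admissible rescaling of $V^\pm$), the $\epsilon$-expansion of $\sJ$ satisfies $\theta_\epsilon(0)<0$. Because $\sJ(0)=I_{n-1}$ is invertible, $\sJ$ stays invertible and $\theta_\epsilon$ stays negative on a small interval, so I may choose $t_0\in(0,b)$ with $\theta_\epsilon(t_0)<0$.

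Next I would feed this into Corollary~\ref{bhd}. The null $N$-convergence condition yields $\Ric_N(\dot{\eta})\ge 0$ along $\eta$, and the hypotheses $N\in(-\infty,1]\cup[n,+\infty]$, $\epsilon$ in the null range \eqref{ram}, and future $\epsilon$-completeness are exactly those of the corollary. Applying it to $\eta|_{(0,b)}$ produces a parameter $t\in(t_0,b)$ with $\det\sJ(t)=0$, that is, a focal point of $S$ along $\eta$. Then \cite[Proposition~5.1]{Min-Ray} shows that $\eta$, containing a focal point, enters $I^+(S)$; but a lightlike $S$-ray does not intersect $I^+(S)$, as recalled in Subsection~\ref{ssc:trap}. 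This contradiction establishes that $\eta$ is future $\epsilon$-incomplete, which is the assertion.

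The substantive inputs (Corollary~\ref{bhd} and the focal-point result of \cite{Min-Ray}) are already available, so the expected difficulty is purely organizational: one must verify that the $S$-ray genuinely sits inside the $V^\pm$-congruence so that the trapped inequality controls the sign of $\theta_\epsilon(0)$, and that the expansion hypothesis, imposed at the endpoint $t=0$, can be transferred to an interior parameter $t_0\in(0,b)$ as Corollary~\ref{bhd} requires. Both follow from the orthogonality of $\eta$ and the continuity and invertibility of $\sJ$ near $0$.
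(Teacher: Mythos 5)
Your proof is correct and is essentially the paper's own argument: assume future $\epsilon$-completeness, apply Corollary~\ref{bhd} (using the $\psi$-trapped condition to get $\theta_\epsilon<0$ and the null $N$-convergence condition to get $\Ric_N(\dot\eta)\ge 0$) to produce a focal point, then invoke \cite[Proposition~5.1]{Min-Ray} to force $\eta$ into $I^+(S)$, contradicting the definition of a lightlike $S$-ray. The only difference is that you spell out details the paper leaves implicit --- that the ray sits in the $V^\pm$-congruence by orthogonality and that $\theta_\epsilon(0)<0$ transfers by continuity to an interior parameter $t_0$ --- which is a faithful elaboration rather than a different route.
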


\begin{proof}
Assume to the contrary that a lightlike $S$-ray is future $\epsilon$-complete
for some $\epsilon$ satisfying \eqref{ram}.
By Corollary~\ref{bhd} it develops a focal point,
hence by \cite[Proposition~5.1]{Min-Ray} it enters $I^+(S)$,
which contradicts the definition of a future lightlike $S$-ray.
$\qedd$
\end{proof}

\subsection{Singularity theorems}\label{ssc:sing}

In the previous sections we generalized Step~\ref{step1} according to the general strategy
outlined in Subsection~\ref{com}.
Thus, we are ready to obtain some notable singularity theorems
in the weighted Lorentz--Finsler framework.

%
%
%
%
We say that $S \subset M$ is \emph{achronal} if $I^+(S) \cap S=\emptyset$
(namely, no two points in $S$ are connected by a timelike curve).
A nonempty set $S \subset M$ is called a \emph{future trapped set}
if the \emph{future horismos} $E^+(S):=J^+(S) \setminus I^+(S)$ of $S$ is nonempty and compact.
Recall Definition~\ref{df:Cauchy} for the definition of Cauchy hypersurfaces.


As mentioned in Subsection~\ref{com} we have the next causality core statement which is valid
for our Finsler spacetimes (and also for less regular spaces, \cite[Theorem~2.67]{Min-causality}).

\begin{theorem} \label{jvi}
Let $(M,L)$ be a Finsler spacetime admitting a non-compact Cauchy hypersurface.
Then every nonempty compact set $S$ admits a future lightlike $S$-ray.
\end{theorem}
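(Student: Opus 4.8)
The plan is to argue by contradiction, exploiting the fact that this is a purely \emph{causal} statement: it involves only the cone distribution and the Lorentz--Finsler distance $d$, and neither the curvature nor the weight $\psi$ enters. Since the manifold admits convex neighborhoods (recall Subsection~\ref{ssc:geod}), the local causality theory and the limit curve machinery developed for Lorentzian spacetimes transfer verbatim, and the proof I would give is the Lorentz--Finsler rendering of the classical Penrose argument (compare \cite[Theorem~2.67]{Min-causality} and \cite[Theorem~6.23]{Min-Rev}). As a preliminary, I note that by Proposition~\ref{pr:gl-hyp} the existence of a Cauchy hypersurface $\Sigma$ makes $(M,L)$ globally hyperbolic, so that $J^+(x)$ is closed, $J^+(S)$ is closed for compact $S$, and the Avez--Seifert theorem (Theorem~\ref{th:connect}) is available.

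First I would record the structure of the horismos. Since $\eta$ is lightlike, $\ell(\eta|_{[0,t]})=0$, so being a future lightlike $S$-ray is equivalent to being a future inextendible lightlike geodesic $\eta$ with $\eta(0)\in S$ and $\eta(t)\notin I^+(S)$ for all $t$; that is, a complete future generator of the horismos $E^+(S)=J^+(S)\setminus I^+(S)$. Because $J^+(S)$ is closed, $E^+(S)=\partial I^+(S)$, which is a nonempty, closed, achronal, \emph{edgeless} $C^0$-hypersurface: nonemptiness follows by picking $x_0\in S$ minimizing a Cauchy time function, so that no point of $S$ chronologically precedes $x_0$ and hence $x_0\in E^+(S)$. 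Moreover, any $p\in E^+(S)$ is joined to $S$ by a maximizing causal curve via Theorem~\ref{th:connect}, which, since $p\notin I^+(S)$, must be a lightlike geodesic lying in $E^+(S)$; these are the generators (see \cite{Min-conv} for the passage to the Finsler setting).

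The heart of the argument is to show that $E^+(S)$ is non-compact and then to extract the ray. Suppose $E^+(S)$ were compact. I would fix a future-directed timelike vector field $X$ whose integral curves are inextendible and hence cross $\Sigma$ exactly once, giving a projection $\pi\colon M\lra\Sigma$. Then $\pi|_{E^+(S)}$ is continuous and, by achronality (no two points of $E^+(S)$ share an integral curve of $X$), injective, so its image is compact and thus closed in $\Sigma$; since $E^+(S)$ is an edgeless topological hypersurface of the same dimension as $\Sigma$, invariance of domain makes $\pi(E^+(S))$ open as well. Connectedness of $\Sigma$ then forces $\pi(E^+(S))=\Sigma$, making $\Sigma$ compact, contrary to hypothesis; hence $E^+(S)$ is non-compact. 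To finish, I would take $p_k\in E^+(S)$ leaving every compact set, with generators $\eta_k$ from $x_k\in S$ to $p_k$; using compactness of $S$ to pass to $x_k\to x\in S$ with $\dot\eta_k(0)$ converging to a future-directed lightlike vector, the limit curve theorem produces a future inextendible lightlike geodesic $\eta$ from $x$. Since $E^+(S)$ is closed and the $p_k$ escape to infinity, $\eta$ acquires no future endpoint and remains in $E^+(S)$, so $\eta(t)\notin I^+(S)$ for all $t$; this $\eta$ is the desired future lightlike $S$-ray.

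The main obstacle is the topological projection step: one must genuinely know that $E^+(S)=\partial I^+(S)$ is an edgeless $C^0$-hypersurface, so that achronality yields injectivity of $\pi$ and invariance of domain yields openness of its image. This fact, together with the limit curve theorem for causal curves, is precisely the robust part of Lorentzian causality theory that survives passage to a closed cone structure; once it is secured, the Finsler refinement and the weight play no role whatsoever, which is why the statement holds in the present generality.
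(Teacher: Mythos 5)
Your proof is correct and coincides with the argument the paper itself invokes: the paper offers no self-contained proof of Theorem~\ref{jvi}, but justifies it by citing \cite[Theorem~2.67]{Min-causality} and \cite[Theorem~6.23]{Min-Rev} and by the principle (Subsection~\ref{com}) that such purely causal--topological statements, using only convex neighborhoods and the cone structure and never the curvature or the weight, pass word-for-word to Finsler spacetimes. Your write-up --- the horismos $E^+(S)=\partial I^+(S)$ as a nonempty, closed, achronal, edgeless $C^0$-hypersurface generated by maximizing lightlike geodesics via Theorem~\ref{th:connect}; the projection/invariance-of-domain contradiction with non-compactness of the Cauchy hypersurface when $E^+(S)$ is compact; and the limit-curve extraction of an inextendible achronal generator when it is not --- is precisely that classical Penrose-type argument, correctly transferred.
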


Joining Proposition~\ref{vip} (as Steps~\ref{step1} and \ref{step2})
with Theorem~\ref{jvi} (as Step~\ref{step3}),
we obtain our first singularity theorem, which is a generalization of Penrose's theorem
(analogous to \cite[Theorem~6.25]{Min-Rev}).
Recall that a $\psi$-trapped surface is compact.

\begin{theorem}[Weighted Finsler Penrose's theorem]\label{th:Penrose} 
Let $(M,L,\psi)$ be a Finsler spacetime of dimension $n +1 \ge 3$,
admitting a non-compact Cauchy hypersurface
and satisfying the null $N$-convergence condition for some $N \in (-\infty,1] \cup [n,+\infty]$.
Suppose that there is a $\psi$-trapped surface $S$.
Then there exists a lightlike geodesic issued from $S$ which is future $\epsilon$-incomplete
for every $\epsilon \in \R$ that belongs to the null $\epsilon$-range in \eqref{ram}.
\end{theorem}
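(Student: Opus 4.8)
The plan is to assemble the theorem out of its two prepared ingredients, exactly along the three-step template of Subsection~\ref{com}: Proposition~\ref{vip} plays the role of Steps~\ref{step1}--\ref{step2}, and the causality core statement Theorem~\ref{jvi} plays the role of Step~\ref{step3}. First I would recall that a $\psi$-trapped surface $S$ is, by definition (see Definition~\ref{jbp} and the preceding discussion), a compact spacelike submanifold of co-dimension $2$; in particular it is a nonempty compact set. Since $(M,L)$ admits a non-compact Cauchy hypersurface by hypothesis, Theorem~\ref{jvi} applies and produces a future lightlike $S$-ray $\eta:[0,b) \lra M$. By the definition of an $S$-ray, this $\eta$ is a future inextendible lightlike geodesic with $\eta(0) \in S$, so it is a lightlike geodesic issued from $S$.

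Next I would feed this $\eta$ into Proposition~\ref{vip}. Its hypotheses match those of the theorem verbatim: the dimension is $n+1 \ge 3$, the null $N$-convergence condition holds for the same range $N \in (-\infty,1] \cup [n,+\infty]$, and $S$ is the given $\psi$-trapped surface. Proposition~\ref{vip} therefore guarantees that \emph{every} lightlike $S$-ray is future $\epsilon$-incomplete for each $\epsilon \in \R$ in the null $\epsilon$-range \eqref{ram}; applying this to the specific $\eta$ produced in the previous step yields a lightlike geodesic issued from $S$ that is future $\epsilon$-incomplete for every such $\epsilon$. This is precisely the asserted conclusion, so the proof is complete.

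The substance of the result resides entirely in its two inputs, and the proof is a short synthesis rather than a fresh computation; the heavy lifting — the null Raychaudhuri machinery of Section~\ref{sc:n-Ray} behind Proposition~\ref{vip} and the purely topological causality argument behind Theorem~\ref{jvi} — has already been carried out. The only point requiring care is that the two statements be applied to the \emph{same} object: Theorem~\ref{jvi} supplies an \emph{existential} output (some $S$-ray exists), while Proposition~\ref{vip} is a \emph{universal} statement (every $S$-ray is incomplete), and it is exactly this quantifier structure that lets the existential conclusion of Step~\ref{step3} dovetail cleanly with the universal conclusion of Steps~\ref{step1}--\ref{step2}. Thus I anticipate no genuine obstacle beyond verifying that the compactness of $S$ and the dimension and convergence hypotheses align, which they do by construction.
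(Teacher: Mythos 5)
Your proposal is correct and coincides with the paper's own argument: the paper obtains Theorem~\ref{th:Penrose} precisely by ``joining Proposition~\ref{vip} (as Steps~\ref{step1} and \ref{step2}) with Theorem~\ref{jvi} (as Step~\ref{step3})'', i.e., the compactness of the $\psi$-trapped surface $S$ lets Theorem~\ref{jvi} produce a future lightlike $S$-ray, to which Proposition~\ref{vip} is then applied to yield the future $\epsilon$-incompleteness for every $\epsilon$ in the range \eqref{ram}. Your remark on the quantifier structure (existential output of Theorem~\ref{jvi} fed into the universal statement of Proposition~\ref{vip}) is exactly how the paper's synthesis works.
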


As another example of causality core statement,
we consider the following theorem corresponding to
\cite[Theorem~2.64]{Min-causality} or \cite[Theorem~4.106]{Min-Rev}.
Recall that a \emph{time function} is a continuous function that increases over every causal curve.
For closed cone structures and hence for Finsler spacetimes,
the existence of a time function is equivalent to the \emph{stable causality},
i.e., the possibility of widening the causal cones without introducing closed causal curves
(see \cite[Theorem~2.30]{Min-causality}).
A \emph{lightlike line} is an inextendible lightlike geodesic for which
no two points can be connected by a timelike curve (i.e., achronality).

\begin{theorem} \label{bos}
Let $(M,L)$ be a chronological Finsler spacetime.
If there are no lightlike lines, then there exists a time function
and hence $(M,L)$ is stably causal.
\end{theorem}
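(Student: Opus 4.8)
The plan is to treat this as a causality core statement of the type described in Step~\ref{step3} of Subsection~\ref{com}: its proof is of a purely topological nature, using only the cone distribution $x \mapsto \overline{\Omega}_x$ and the existence of convex neighborhoods (guaranteed under Definition~\ref{df:FLstr} by Whitehead's theorem, as recalled at the end of Subsection~\ref{ssc:geod}). Since neither the curvature endomorphism $R_v$ nor the weight $\psi$ plays any role, the Lorentzian argument of \cite[Theorem~4.106]{Min-Rev} and the closed-cone-structure argument of \cite[Theorem~2.64]{Min-causality} transfer to our Finsler spacetimes word for word; I would present the proof as an adaptation of these, emphasising that only the convex-neighborhood structure is invoked.

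I would first split the statement at the word ``hence.'' The implication that the existence of a time function yields stable causality is exactly \cite[Theorem~2.30]{Min-causality}, which holds for closed cone structures and hence for Finsler spacetimes; this costs nothing. The substance is therefore to establish stable causality from the two hypotheses, and for this I would argue by contraposition: assuming $(M,L)$ is \emph{not} stably causal, I would produce a lightlike line, contradicting the hypothesis. The time function is then recovered from stable causality through the same equivalence \cite[Theorem~2.30]{Min-causality}.

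The construction of the offending lightlike line proceeds by a limit-curve argument. Failure of stable causality is equivalent to the non-antisymmetry of the Seifert (stable) causal relation, so there is a cycle surviving under every admissible widening of the cones; widening the cones always introduces a closed causal curve, and letting the widening shrink back to $\overline{\Omega}$ produces, via the Finsler limit-curve theorem available in this setting (see \cite{Min-Ray,Min-causality}), an inextendible causal curve $\gamma$ in $(M,L)$ realised as a limit of such almost-closed curves. The hypotheses of chronology and of the absence of lightlike lines enter precisely here to control $\gamma$: an achronal causal curve is locally maximizing and hence, by \cite[Lemma~2]{Min-conv}, a lightlike geodesic, so that a suitable such $\gamma$ is an inextendible achronal lightlike geodesic, i.e.\ a lightlike line.

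The hard part is the limit-curve step: one must guarantee that the accumulating almost-closed causal curves neither collapse to a point nor remain imprisoned in a compact set in a way that destroys the inextendibility or achronality of the limit. These are exactly the technical points resolved in \cite[Theorem~2.64]{Min-causality} and \cite[Theorem~4.106]{Min-Rev}, whose proofs rely solely on convex neighborhoods and on the cone structure, both available here. Consequently the entire argument reduces to verifying that those hypotheses hold in our framework and then invoking the cited results, with no genuinely new analytic input required.
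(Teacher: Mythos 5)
The paper offers no independent proof of Theorem~\ref{bos}: it is presented exactly as you treat it, namely as a causality core statement whose purely topological proof --- using only the cone distribution and convex neighborhoods, never the curvature or the weight --- passes word-for-word from \cite[Theorem~2.64]{Min-causality} and \cite[Theorem~4.106]{Min-Rev}, combined with the equivalence between time functions and stable causality from \cite[Theorem~2.30]{Min-causality}, so your proposal takes essentially the same route. One caveat: your internal limit-curve sketch understates the real difficulty (the nearly closed curves are causal only for the \emph{widened} cones, so chronology of $(M,L)$ does not by itself make a limit curve achronal; the cited proofs instead run through the Seifert/$K$-relation machinery), but since you explicitly defer precisely these points to the same citations, this does not affect the correctness of your proposal relative to the paper's own treatment.
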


Joining this with Theorem~\ref{vjx} and Proposition~\ref{bjs}
(as Steps~\ref{step1} and \ref{step2}, respectively),
we have a generalization of a singularity theorem obtained by the second author in \cite{Min-chr}.

\begin{theorem}[Absence of time implies singularities]\label{th:time} 
Let $(M,L,\psi)$ be a chronological Finsler spacetime of dimension $n +1 \ge 3$,
satisfying the null genericity and the null $N$-convergence conditions
for some $N \in (-\infty,1) \cup [n,+\infty]$.
If there are no time functions, then there exists a lightlike line which is
$\epsilon$-incomplete for every $\epsilon \in \R$ belonging to the null $\epsilon$-range \eqref{ram}.
\end{theorem}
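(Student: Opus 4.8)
The plan is to combine the three ingredients assembled in Subsection~\ref{com}: the causality core statement Theorem~\ref{bos} (Step~\ref{step3}), the convergence-and-genericity result Theorem~\ref{vjx} (Step~\ref{step1}), and the maximization obstruction Proposition~\ref{bjs} (Step~\ref{step2}). First I would feed the absence of a time function into Theorem~\ref{bos} read contrapositively: since $(M,L)$ is chronological yet, by assumption, admits no time function, Theorem~\ref{bos} yields a \emph{lightlike line}, namely an inextendible (future-directed) lightlike geodesic $\eta:(a,b)\lra M$ that is achronal.

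The second step is to show that this line is maximizing on every subinterval. For $t_1<t_2$ in $(a,b)$, any causal curve from $\eta(t_1)$ to $\eta(t_2)$ that is not a lightlike geodesic would, by the Finsler fact recalled after Theorem~\ref{th:connect} (\cite[Lemma~2]{Min-conv}), give rise to a timelike connection of these points and contradict achronality. Hence the only connecting causal curves are lightlike geodesics, which have zero length, so that $d(\eta(t_1),\eta(t_2))=0=\ell(\eta|_{[t_1,t_2]})$ and $\eta$ is maximizing. Proposition~\ref{bjs} then guarantees that a maximizing causal geodesic cannot carry a pair of mutually conjugate points, so $\eta$ has none.

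The final step is to invoke Theorem~\ref{vjx}, whose hypotheses (null genericity, null $N$-convergence for $N\in(-\infty,1)\cup[n,+\infty]$, and $n+1\ge 3$) are exactly those assumed here. Applied to the future-directed lightlike geodesic $\eta$, it gives the dichotomy that $\eta$ either contains a pair of conjugate points or is $\epsilon$-incomplete for every $\epsilon$ in the null $\epsilon$-range \eqref{ram}. Since the previous step has ruled out conjugate points, $\eta$ must be $\epsilon$-incomplete, which is precisely the asserted lightlike line and completes the argument. The one step that requires care is the maximization claim: everything hinges on the Finsler causality principle that a non-geodesic causal connection can be promoted to a timelike one, and once that is invoked the result is a direct assembly of the three quoted statements.
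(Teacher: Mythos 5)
Your proposal is correct and follows exactly the route the paper intends: it joins Theorem~\ref{bos} (read contrapositively to produce a lightlike line), Proposition~\ref{bjs} (via the maximizing property of an achronal lightlike geodesic, which you rightly justify through the fact that non-geodesic causal connections can be promoted to timelike ones), and Theorem~\ref{vjx} to exclude the conjugate-point alternative. The paper only sketches this assembly in one sentence, and your write-up supplies precisely the details it leaves implicit.
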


In the case of $N=1$, we have the same conclusion by replacing the genericity condition
with the weighted one $\sR_{(1,0)} \neq 0$ (recall Remarks~\ref{rm:N=0}, \ref{ngp}).

The next lemma from \cite[Corollary~2.117]{Min-Rev}
passes word-for-word to the Lorentz--Finsler case.
We say that $S \subset M$ is \emph{future null} (resp.\ \emph{causally}) \emph{araying}
if there are no future-directed lightlike (resp.\ causal) $S$-rays.

\begin{lemma} \label{ngk}
Let $(M,L)$ be a stably causal Finsler spacetime.
A nonempty compact set $S$ is a future trapped set if and only if it is future null araying.
\end{lemma}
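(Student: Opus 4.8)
The plan is to prove the two implications separately, relying on the causality-theoretic machinery (the limit curve theorem, the absence of imprisonment of inextendible causal curves, and the existence of a time function under stable causality) that is available for Finsler spacetimes exactly as in the Lorentzian case, see \cite{Min-causality,Min-Rev}. Throughout, recall that $E^+(S)=J^+(S) \setminus I^+(S)$ and that a future lightlike $S$-ray is a future inextendible lightlike geodesic $\eta:[0,b) \lra M$ with $\eta(0) \in S$ that maximizes the distance from $S$. Since $\eta$ is lightlike, this maximization simply says $d(S,\eta(t))=0$, that is, $\eta$ stays out of $I^+(S)$: a causal curve from $S$ of positive length would not be a lightlike geodesic and hence, by the remark following Theorem~\ref{th:connect}, would be deformable to a timelike curve. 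In particular the image of any future lightlike $S$-ray is contained in $E^+(S)$.

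First I would treat the implication ``future trapped $\Rightarrow$ future null araying'' by contraposition. Suppose $S$ admits a future lightlike $S$-ray $\eta:[0,b) \lra M$. As just observed, $\mathrm{im}(\eta) \subseteq E^+(S)$. Because $(M,L)$ is stably causal, hence strongly causal, a future inextendible causal curve cannot be totally imprisoned in a compact set; therefore the image of $\eta$ is not contained in any compact subset of $M$. Since $\mathrm{im}(\eta) \subseteq E^+(S)$, the set $E^+(S)$ cannot be compact, and so $S$ is not a future trapped set. This is the desired contrapositive.

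For the converse ``future null araying $\Rightarrow$ future trapped'' I first dispose of non-emptiness, which in fact holds unconditionally: stable causality furnishes a time function $\mathfrak{t}$, and its minimum over the compact set $S$ is attained at some $x_0 \in S$; any $y \in S$ with $y \ll x_0$ would satisfy $\mathfrak{t}(y)<\mathfrak{t}(x_0)$, so $x_0 \notin I^+(S)$ and thus $x_0 \in E^+(S)$. For compactness I argue by contradiction. If $E^+(S)$ is non-compact, choose $p_k \in E^+(S)$ leaving every compact set. Each $p_k \in J^+(S) \setminus I^+(S)$ is joined to some $s_k \in S$ by a causal curve of vanishing length, hence by a maximizing lightlike geodesic $\eta_k$ (a maximizing lightlike curve is a lightlike geodesic, again by the remark after Theorem~\ref{th:connect}). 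Passing to a subsequence, $s_k \to s_\infty \in S$ by compactness of $S$, while the auxiliary Riemannian arclengths of the $\eta_k$ diverge since the $p_k$ escape to infinity. The limit curve theorem then yields a future inextendible causal limit curve $\eta$ issuing from $s_\infty$.

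It remains to verify that $\eta$ is a genuine future lightlike $S$-ray, and this is where the only real care is needed. Since each $\eta_k$ avoids the open set $I^+(S)$ and $\eta$ is a locally uniform limit of the $\eta_k$, the image of $\eta$ lies in the closed set $M \setminus I^+(S)$; as $\eta$ is a future causal curve from $s_\infty \in S$, its image lies in $E^+(S)$. Hence $d(S,\eta(t))=0=\ell(\eta|_{[0,t]})$, so $\eta$ maximizes from $S$, and being maximizing and achronal it is a lightlike geodesic, thus a future lightlike $S$-ray. This contradicts the araying hypothesis and forces $E^+(S)$ to be compact. The main obstacle is precisely this limit step: one must know that the limit curve theorem and the non-imprisonment property hold in the Finsler spacetime setting, rather than appeal to global hyperbolicity, which is not assumed here and under which $d$ would be continuous. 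Both ingredients are available from \cite{Min-causality,Min-Rev}, which is exactly why the classical Lorentzian argument transfers word-for-word.
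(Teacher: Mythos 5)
Your overall strategy is the right one, and it is in substance the proof the paper has in mind: the paper gives no argument of its own, citing \cite[Corollary~2.117]{Min-Rev} together with the principle that such purely causality-theoretic proofs transfer word-for-word to Finsler spacetimes, and your ingredients (non-imprisonment under strong causality for ``trapped $\Rightarrow$ araying'', a time function for non-emptiness of $E^+(S)$, a limit-curve construction for compactness) are exactly those of the standard Lorentzian proof. The first direction and the non-emptiness argument are correct as written.

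There is, however, a genuine gap in the compactness part of ``araying $\Rightarrow$ trapped''. You start from: ``If $E^+(S)$ is non-compact, choose $p_k \in E^+(S)$ leaving every compact set.'' In a manifold, non-compactness of a subset does not provide such a sequence: the set may be contained in a compact set and merely fail to be closed. And $E^+(S)=J^+(S)\setminus I^+(S)$ need not be closed, because $J^+(S)$ need not be closed; for instance, in $\R^{1+2}$ Minkowski space with a point $q$ removed from the light cone of the origin $x$ (take $S=\{x\}$), the points of the broken generator beyond $q$ are limits of points of $E^+(S)$ lying on nearby generators, yet they do not belong to $J^+(S)$ at all. So your argument rules out only the ``escaping to infinity'' failure mode and leaves open the possibility that $E^+(S)$ is bounded but not closed. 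To close the gap you must show that, in the absence of lightlike $S$-rays, $E^+(S)$ is also closed, and this requires the other branch of the limit curve theorem: given $p_k \in E^+(S) \to p \in M$, connect $s_k \in S$ to $p_k$ by maximizing lightlike geodesics as you did; if their auxiliary arclengths admit a bounded subsequence, the limit curve is a causal curve from $s_\infty \in S$ to $p$, so $p \in J^+(S)$, and since $M \setminus I^+(S)$ is closed we get $p \in E^+(S)$; if the arclengths diverge, the inextendible limit curve is a lightlike $S$-ray exactly as in your escaping case (alternatively, it is an inextendible causal curve imprisoned in the compact closure of $E^+(S)$, contradicting strong causality). The cleanest repair is to prove sequential compactness directly: for an \emph{arbitrary} sequence $p_k \in E^+(S)$, the bounded/unbounded arclength dichotomy yields either a subsequential limit inside $E^+(S)$ or a future lightlike $S$-ray. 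With that addition your proof is complete and coincides with the argument behind the result the paper cites.
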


Let us come to the causality core statement,
found in \cite[Theorem~2.71]{Min-causality} or \cite[Theorem~6.43]{Min-Rev},
behind Hawking and Penrose's theorem.

\begin{theorem} \label{th:min19}
Chronological Finsler spacetimes $(M,L)$ without causal lines
do not admit nonempty, compact, future null araying sets.
\end{theorem}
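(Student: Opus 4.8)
The plan is to argue by contraposition and to regard the statement as a purely causal (Step~\ref{step3}) result: neither the weight $\psi$ nor the curvature enters, so it suffices to reproduce the Lorentzian Hawking--Penrose construction, which relies only on the cone distribution and on the existence of convex neighbourhoods. Thus I would assume that $(M,L)$ is chronological and that $S$ is a nonempty compact \emph{future null araying} set, and from these data produce a \emph{causal line} (an inextendible maximizing causal geodesic), contradicting the hypothesis that none exists.

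The first step is to upgrade the araying property to the \emph{future trapped} property, namely to show that $E^+(S)=J^+(S)\setminus I^+(S)$ is compact. If it were not, I would choose a sequence $q_k\in E^+(S)$ leaving every compact set. Each $q_k$ is connected to $S$ by a causal curve which cannot be timelike (else $q_k\in I^+(S)$), hence by the Finsler fact recalled before Theorem~\ref{th:connect} it lies on a lightlike geodesic segment $\gamma_k$ issuing from some $p_k\in S$ and staying in $E^+(S)$. Using the compactness of $S$ to pass to $p_k\to p\in S$ and normalizing the initial directions, the limit curve theorem produces a future-inextendible limit lightlike geodesic $\gamma$ from $p$; as each $\gamma_k$ avoids the open set $I^+(S)$, so does $\gamma$, whence $d(S,\gamma(t))=0=\ell(\gamma|_{[0,t]})$ for all $t$ and $\gamma$ is a future lightlike $S$-ray---contradicting the assumption that $S$ is future null araying. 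Note that I cannot shortcut this by Lemma~\ref{ngk}, since that equivalence presupposes stable causality whereas only chronology is at hand.

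The heart of the argument, and the step I expect to be the genuine obstacle, is to manufacture a causal line out of the compact future trapped set $S$. The idea is to use the compactness of $E^+(S)$ to exhibit $(M,L)$ as \emph{causally disconnected} by a compact set: one produces future- and past-divergent sequences of points $\{p_k\}$, $\{q_k\}$ such that every future-directed causal curve from $p_k$ to $q_k$ meets a fixed compact set $K$ built from $S$ and $E^+(S)$. Feeding these connecting curves into the limit curve theorem yields an inextendible causal geodesic $\lambda$ through $K$, and the upper semicontinuity of the Lorentz--Finsler length functional forces $\lambda$ to realize the distance between any two of its points, i.e.\ to be maximizing; thus $\lambda$ is a causal line. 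The chronological hypothesis is precisely what prevents $\lambda$ from collapsing to a closed or partially imprisoned curve, and wherever a globally hyperbolic region surfaces one may also invoke the Avez--Seifert theorem (Theorem~\ref{th:connect}) to realize distances by geodesics.

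Finally, the reason this Lorentzian argument transcribes verbatim is exactly the Finsler input recorded earlier: convex neighbourhoods exist, so that $C^0$-limits of causal geodesics are again causal geodesics and local maximizers are geodesics; and a maximizing causal curve is necessarily a timelike or a lightlike geodesic. Since every ingredient is causal-topological and independent of $\psi$, the construction coincides word-for-word with \cite[Theorem~2.71]{Min-causality} and \cite[Theorem~6.43]{Min-Rev}, which I would carry over directly.
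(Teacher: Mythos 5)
Your overall framing agrees with the paper: Theorem~\ref{th:min19} is stated there as a causality core statement with no internal proof at all, only the citations to \cite[Theorem~2.71]{Min-causality} and \cite[Theorem~6.43]{Min-Rev} together with the observation of Subsection~\ref{com} that such purely topological arguments transfer verbatim to the Lorentz--Finsler setting. But since you undertake to reconstruct the Lorentzian argument, the reconstruction must stand on its own, and it has a genuine gap: you set the proof up as a strict contraposition, assuming only chronology plus the compact future null araying set $S$, and then run limit-curve machinery (inextendible limit curves, causal disconnection by a compact set, upper semicontinuity of length forcing the limit to be maximizing). All of that machinery requires \emph{strong causality}, not chronology. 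A chronological spacetime may still contain closed causal curves and totally imprisoned causal curves; in such a spacetime the limits of your curves $\gamma_k$, or of the curves connecting $p_k$ to $q_k$, can be closed or imprisoned rather than inextendible maximizing lines, and the Beem--Ehrlich--Easley theorem that causal disconnection by a compact set yields a causal line \cite{BEE} explicitly assumes strong causality. Your sentence that ``the chronological hypothesis is precisely what prevents $\lambda$ from collapsing to a closed or partially imprisoned curve'' is false; preventing exactly that is the role of strong causality.

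The missing idea is that the hypothesis ``no causal lines'' must be used \emph{during} the construction, not merely invoked at the end to exhibit a contradiction. Arguing by contradiction rather than contraposition, one has: no causal lines $\Rightarrow$ no lightlike lines $\Rightarrow$ stable causality by Theorem~\ref{bos} (a deep theorem, not a formality) $\Rightarrow$ strong causality. Only then does Lemma~\ref{ngk} upgrade the araying property to the future trapped property --- so your explicit refusal to use it (``only chronology is at hand'') is the symptom of the structural mistake, since stable causality \emph{is} at hand --- and only then do the Hawking--Penrose steps you invoke go through: the existence of a future-inextendible timelike curve in the interior of the future domain of dependence $D^+(E^+(S))$, the construction of the causally disconnecting sequences $p_k,q_k$ (which you merely assert, although it is the nontrivial content of the cited proofs), and the maximizing limit curve. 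This is exactly the route the paper itself signals when it applies Theorem~\ref{th:min19} in the proof of Theorem~\ref{th:HaPe}: first stable causality via Theorem~\ref{bos}, then Lemma~\ref{ngk}. A smaller inaccuracy: your compactness dichotomy for $E^+(S)$ treats only sequences escaping every compact set and overlooks the case where $E^+(S)$ fails to be closed, and the nonemptiness of $E^+(S)$ (which needs chronology plus compactness of $S$) is never addressed.
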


Notice that a chronological spacetime without lightlike lines is stably causal
by Theorem~\ref{bos} and hence, by Lemma~\ref{ngk},
future null araying sets in this statement can be equivalently replaced by future trapped sets.
Then the following is an analogue to \cite[Theorem~6.44]{Min-Rev}.
Given an achronal set $S \subset M$,
we define its \emph{edge} $\mathrm{edge}(S)$ as the set of points $x \in \overline{S}$
such that, for every neighborhood $U$ of $x$,
there are $y \in I^-(x;U) \setminus S$, $z \in I^+(x;U) \setminus S$
and a timelike curve in $U \setminus S$ from $y$ to $z$.
We denoted by $I^-(x;U)$ (resp.\ $I^+(x;U)$) the set of points $y \in U$
such that there is a smooth timelike curve in $U$
from $y$ to $x$ (resp.\ from $x$ to $y$).
An achronal set is a closed topological hypersurface
if and only if its edge is empty (see \cite[Corollary~14.26]{ON}).

\begin{theorem} [Weighted Finsler Hawking--Penrose's theorem]\label{th:HaPe} 
Let $(M,L,\psi)$ be a chronological Finsler spacetime
satisfying the causal genericity and the causal $N$-convergence conditions
for some $N \in (-\infty,0) \cup [n,+\infty]$.
Suppose that there exists one of the following:
\begin{enumerate}[{\rm (i)}]
\item
a compact achronal set without edge $($e.g., a compact achronal spacelike hypersurface$)$,
\item
a $\psi$-trapped surface,
\item
a point $x$ such that, on every lightlike geodesic emanating from $x$,
its expansion $\theta_1$ becomes negative at some point
$($i.e., the lightlike geodesic is \emph{reconverging}$)$.
\end{enumerate}
Then $(M,L,\psi)$ admits a timelike geodesic which is $\epsilon$-incomplete
for every $\epsilon \in \R$ belonging to the timelike $\epsilon$-range \eqref{ran},
or a lightlike geodesic which is $\epsilon$-incomplete for every $\epsilon \in \R$ satisfying \eqref{ram}.
In particular, it is $\psi$-incomplete
$($and incomplete in the usual sense if $N \in [n,\infty))$.
\end{theorem}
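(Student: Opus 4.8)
The plan is to follow the three-step scheme of Subsection~\ref{com}: Steps~\ref{step1} and \ref{step2} are already packaged in the conjugate-point results of Section~\ref{sc:conj}, while the causality core (Step~\ref{step3}) is Theorem~\ref{th:min19}. I would argue by producing, in each of the three cases, a causal geodesic carrying no pair of conjugate points, and then reading off its incompleteness directly from Theorems~\ref{th:Beem12.18} and \ref{vjx}.

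The key observation I would isolate first is the following: \emph{any causal line $\gamma$ in $M$ is $\epsilon$-incomplete for every admissible $\epsilon$}. Indeed, a line is maximizing, so Proposition~\ref{bjs} forbids a pair of conjugate points on it; then the causal genericity and causal $N$-convergence conditions, together with Theorem~\ref{th:Beem12.18} (if $\gamma$ is timelike) or Theorem~\ref{vjx} (if $\gamma$ is lightlike), force $\gamma$ to be $\epsilon$-incomplete for every $\epsilon$ in the range \eqref{ran}, resp.\ \eqref{ram}. Thus it suffices to exhibit either a causal line or, in the null cases, a future $\epsilon$-incomplete lightlike $S$-ray.

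Next I would dispatch the three hypotheses against Theorem~\ref{th:min19}, whose contrapositive reads: a chronological spacetime carrying a nonempty compact future null araying set admits a causal line. For (ii), if the trapped surface $S$ admits a future lightlike $S$-ray, then Proposition~\ref{vip} already furnishes its future $\epsilon$-incompleteness and we are done; otherwise $S$ is a nonempty compact future null araying set, so chronology and Theorem~\ref{th:min19} yield a causal line, incomplete by the key observation. Case (iii) is parallel with $S=\{x\}$: a lightlike $x$-ray that reconverges ($\theta_1<0$ somewhere) would, if future $\epsilon$-complete, develop a focal point by Corollary~\ref{bhd} and hence enter $I^+(x)$ by \cite[Proposition~5.1]{Min-Ray}, contradicting the $S$-ray property, so any such $x$-ray is incomplete; and if none exists, $\{x\}$ is future null araying and we again obtain a causal line. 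For (i), the achronal edgeless set $S$ is a closed topological hypersurface with $E^+(S)=S$, hence a nonempty compact future trapped set; were there no causal line there would be no lightlike line, so Theorem~\ref{bos} would render $M$ stably causal, Lemma~\ref{ngk} would promote $S$ to a future null araying set, and Theorem~\ref{th:min19} would be violated---forcing a causal line, incomplete by the key observation.

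Finally, the ``in particular'' assertions follow because $\epsilon=0$ lies in both \eqref{ran} and \eqref{ram} (giving $\psi$-incompleteness), while for finite $N\in[n,\infty)$ the value $\epsilon=1$ also lies in the range (recall Remark~\ref{rm:eps}), yielding ordinary proper-time incompleteness. The step I expect to be most delicate is case~(i): verifying $E^+(S)=S$ for a compact achronal edgeless set and then routing correctly through stable causality (Theorem~\ref{bos}) and Lemma~\ref{ngk} to reach the araying property. This is precisely the point where the purely causality-theoretic inputs---rather than the curvature machinery of the earlier sections---carry the argument, and where one must be careful that the Finsler versions of these statements, as arranged in the excerpt, genuinely apply.
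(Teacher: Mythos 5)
Your proposal is correct and follows essentially the same route as the paper's proof: the same three-case dispatch through Proposition~\ref{vip} and Corollary~\ref{bhd}, the same use of Theorem~\ref{bos} and Lemma~\ref{ngk} in case (i), and the same causality core Theorem~\ref{th:min19}, with your claim $E^+(S)=S$ for compact achronal edgeless sets being exactly the causality fact the paper imports as \cite[Corollary~2.145]{Min-Rev}. The only difference is organizational: the paper runs one global contradiction (no required incompleteness $\Rightarrow$ all causal geodesics have conjugate pairs $\Rightarrow$ no causal lines $\Rightarrow$ each hypothesis yields a compact future null araying set, contradicting Theorem~\ref{th:min19}), whereas you read the same argument in contrapositive form, isolating the observation that any causal line is $\epsilon$-incomplete for every admissible $\epsilon$ (Proposition~\ref{bjs} plus Theorems~\ref{th:Beem12.18} and \ref{vjx}) --- a presentational gain that exhibits the incomplete geodesic explicitly as a causal line or a lightlike ray, but not a different proof.
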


\begin{proof}
Suppose that the claim is not true.
Then, by Theorems~\ref{th:Beem12.18} and \ref{vjx},
every causal geodesic has conjugate points and hence is not maximizing, 
thereby it is not a causal line.
A chronological spacetime without causal lines is stably causal (by Theorem~\ref{bos}),
thus compact future trapped sets and future null araying sets are the same (Lemma~\ref{ngk}).

(i)
A result of causality theory whose proof passes word-for-word to the Lorentz--Finsler case states that
every compact achronal set without edge is a future trapped set (see \cite[Corollary~2.145]{Min-Rev}),
hence a compact future null araying set.
This goes in contradiction with Theorem~\ref{th:min19}.

(ii)
Since a $\psi$-trapped surface is necessarily future null araying due to Proposition~\ref{vip}
and the hypothesis, this also goes in contradiction with Theorem~\ref{th:min19}.

(iii)
By Corollary~\ref{bhd} and \cite[Proposition~5.1]{Min-Ray},
every lightlike geodesic issued from $x$ enters $I^+(x)$,
namely the singleton $\{x\}$ is a compact future null araying set.
Therefore we have a contradiction again with Theorem~\ref{th:min19}.
$\qedd$
\end{proof}

\begin{remark}[$N=0$ case]\label{rm:N=0+}
A version for $N=0$ holds true, there we assume the standard null genericity condition
and the weighted timelike genericity condition demanding $\sR_{(0,0)} \neq 0$
in place of $\sR \neq 0$ at a point on each timelike geodesic (recall Remark~\ref{ngp}).
\end{remark}

We say that $S \subset M$ is \emph{acausal} if it does not admit $x,y \in S$ with $x<y$,
namely no causal curve meets $S$ more than once.
An acausal set is clearly achronal.
A \emph{partial Cauchy hypersuface} is by definition an acausal set without edge
(see \cite[Definition~3.35]{Min-Rev}).
The causal core statement which corresponds to Hawking's singularity theorem
is the following (see \cite[Theorem~6.48]{Min-Rev}).

\begin{theorem} \label{mom}
On a Finsler spacetime $(M,L)$ there is no compact partial Cauchy hypersurface $S$
which is future causally araying.
\end{theorem}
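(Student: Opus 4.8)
The plan is to establish the contrapositive in constructive form: \emph{every} compact partial Cauchy hypersurface $S$ admits a future causal $S$-ray, so that such an $S$ can never be future causally araying. As stressed in Subsection~\ref{com}, this is a purely causal--topological statement; it coincides with \cite[Theorem~6.48]{Min-Rev}, and its proof carries over verbatim to Finsler spacetimes, since it invokes only the existence of convex neighborhoods (Whitehead) and the causal structure, never the curvature or the weight. First I would record the structural facts. Because $S$ is acausal with empty edge, it is a closed topological hypersurface, and the future Cauchy development $D^+(S)$ has the property that $\mathrm{int}(D^+(S))$ is globally hyperbolic with $S$ as a Cauchy hypersurface (a standard fact of causality theory that transfers to the present setting, cf.\ Proposition~\ref{pr:gl-hyp}). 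In particular the Avez--Seifert theorem (Theorem~\ref{th:connect}) applies inside $\mathrm{int}(D^+(S))$: for every $q \in \mathrm{int}(D^+(S))$ there is a maximizing causal geodesic from $S$ to $q$ realizing $d(S,q)$, and such a maximizer issues orthogonally from $S$.

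The ray would then be produced by a limit curve argument. I would select a sequence $q_k \in \mathrm{int}(D^+(S))$ escaping toward the future boundary of the development --- either with $d(S,q_k) \to \sup_{q \in J^+(S)} d(S,q)$ or with $q_k$ approaching $H^+(S)$ --- and let $\sigma_k$ be a maximizing causal geodesic from $x_k := \sigma_k(0) \in S$ to $q_k$. Since $S$ is compact, after passing to a subsequence the initial points converge, $x_k \to x \in S$, and upon normalizing (by $F(\dot\sigma_k) \equiv 1$ in the timelike case, or by a suitable affine normalization in the lightlike case) the initial directions converge to a causal vector $v \in \overline{\Omega}_x$. The limit curve theorem (a standard tool of Lorentz--Finsler causality, see \cite{Min-causality}) then yields a future-directed causal geodesic $\sigma$ with $\sigma(0)=x$ and $\dot\sigma(0)=v$ to which the $\sigma_k$ converge.

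It then remains to verify that $\sigma$ is a future causal $S$-ray, that is, future inextendible with $d(S,\sigma(t)) = \ell(\sigma|_{[0,t]})$ for all $t$. Maximization passes to the limit: continuity of $d$ on the globally hyperbolic region, together with the convergence $\sigma_k \to \sigma$, forces $d(S,\sigma(t)) = \ell(\sigma|_{[0,t]})$ for every $t$, using that a limit of maximizers is maximizing. Future inextendibility is arranged by the choice of the $q_k$: as the approximating segments reach arbitrarily far toward the future boundary of $D^+(S)$, their limit cannot acquire a future endpoint in $M$. Once $\sigma$ is exhibited as a future causal $S$-ray, the conclusion follows.

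The hard part is precisely this last step --- guaranteeing that the limit is future inextendible while staying globally maximizing --- and in particular handling correctly the dichotomy between the case in which maximizing geodesics from $S$ can be taken of unbounded length (yielding a timelike $S$-ray) and the case in which their lengths are bounded, so that one is driven to the Cauchy horizon $H^+(S)$ and must instead produce a (possibly incomplete) lightlike $S$-ray. Here compactness of $S$ (to run the limit curve theorem) and the acausal, edgeless hypotheses (to make $\mathrm{int}(D^+(S))$ globally hyperbolic and to ensure the limit genuinely issues from $S$ and realizes $d(S,\cdot)$) are indispensable. Since all these steps are of a topological, causal-theoretic nature, they reproduce the Lorentzian arguments of \cite{Min-causality,Min-Rev} without alteration in the Lorentz--Finsler framework.
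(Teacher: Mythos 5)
Your proposal is correct and takes essentially the same route as the paper: the paper's entire proof of Theorem~\ref{mom} consists of the observation that this is the causality core statement \cite[Theorem~6.48]{Min-Rev}, whose proof uses only the existence of convex neighborhoods and therefore passes word-for-word to the Finsler setting --- exactly the transfer principle you invoke and justify in the same way. Your supplementary limit-curve sketch of the Lorentzian argument's internals goes beyond what the paper records, and the ``hard part'' you flag (inextendibility and maximization in the limit, i.e.\ the Cauchy-horizon case) is covered by the same citation that both you and the paper ultimately rely on.
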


The concepts involved in this statement being dependent on the notion of Lorentz--Finsler length
are not purely causal.
Nevertheless, the proof uses only the existence of convex neighborhoods
and does indeed pass word-for-word to the Finsler setting.
We need a definition which is the analog of Definition~\ref{jbp} in the timelike case.

\begin{definition}[Contraction and expansion]\label{df:contr}
Let $S$ be a $C^2$-spacelike hypersurface, and $V$ be its future-directed normal vector field,
namely $V(x) \in \Omega_x$ and $\ker g_V(V(x),\cdot)= T_xS$ for all $x \in S$.
Consider the geodesic congruence generated by $V$,
the expansions $\theta=\trace(w \mapsto D^V_w V)$ and $\theta_\epsilon$ on $S$
in the same way as Subsection~\ref{ssc:trap}.
Then we say that $S$ is \emph{contracting} if $\theta<0$ on $S$,
and that $S$ is \emph{$\psi$-contracting} if $\theta_1<0$ on $S$.
If the inequality is reversed, then one speaks of \emph{expanding}
and \emph{$\psi$-expanding} hypersurfaces.
\end{definition}

\begin{theorem}[Weighted Finsler Hawking's theorem]\label{hah} 
Let $(M,L,\psi)$ be a Finsler spacetime satisfying the timelike $N$-convergence condition
for some $N \in (-\infty,0] \cup [n,+\infty]$.
If $M$ contains a compact $C^2$-spacelike hypersurface $S$ which is $\psi$-contracting,
then there exists a timelike geodesic issued normally from $S$ which is future $\epsilon$-incomplete
for every $\epsilon \in \R$ that belongs to the timelike $\epsilon$-range \eqref{ran}.
\end{theorem}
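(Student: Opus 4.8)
The plan is to follow the three-step scheme of Subsection~\ref{com}, taking Theorem~\ref{mom} as the causality core (Step~III) and feeding into it the convergence analysis of Section~\ref{sc:Ray} (Steps~I and~II). First I would check that $S$ qualifies for Theorem~\ref{mom}: a compact $C^2$-spacelike hypersurface without boundary is a closed topological hypersurface, and since it is acausal its edge is empty, so it is a compact partial Cauchy hypersurface. The contrapositive of Theorem~\ref{mom} then yields that $S$ is \emph{not} future causally araying, i.e.\ there exists a future-directed causal $S$-ray $\eta$.

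Next I would identify $\eta$ as a timelike geodesic issued normally from $S$. By the first-variation (transversality) property of maximizers, a causal $S$-ray issues $g_{\dot\eta}$-orthogonally from $S$; since $S$ has codimension one and is spacelike, the $g_V$-orthogonal complement of $T_{\eta(0)}S$ in $T_{\eta(0)}M$ is the timelike line $\R V(\eta(0))$, so $\dot\eta(0)$ is a positive multiple of the future unit normal and $\eta$ is timelike. After unit-speed reparametrization $\eta$ is thus a member of the normal geodesic congruence of Definition~\ref{df:contr}, and I would attach to it the Lagrange tensor field $\sJ$ of that congruence, built exactly as in Subsection~\ref{ssc:trap} with $\sJ(0)=I_n$ and $\sJ'(0)(w)=D^V_w V$ (the $g_{\dot\eta}$-symmetry of $\sB(0)$ being the symmetry of the shape operator, via \eqref{eq:g_X}). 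By construction the initial expansion of $\sJ$ is the expansion of $S$ at $\eta(0)$, so $\psi$-contracting gives $\theta_1(0)<0$ and hence $\theta_\epsilon(0)<0$ for every $\epsilon$, the sign of $\theta_\epsilon$ being independent of $\epsilon$.

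Finally I would show that $\eta$ is future $\epsilon$-incomplete for every admissible $\epsilon$. Suppose instead that $\eta$ were future $\epsilon$-complete for some $\epsilon$ in the timelike $\epsilon$-range \eqref{ran}. The timelike $N$-convergence condition gives $\Ric_N(\dot\eta)\ge 0$, and together with $\theta_\epsilon(t_0)<0$ for small $t_0>0$ (by continuity near $0$) Corollary~\ref{mci} forces a focal point $\det\sJ(t)=0$ at some $t\in(t_0,b)$. But a timelike geodesic from $S$ that passes a focal point ceases to maximize $d(S,\cdot)$ (the focal-point form of Step~II, as in Proposition~\ref{bjs} and \cite[Proposition~5.1]{Min-Ray}), contradicting the defining maximization property of the $S$-ray. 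Hence no such $\epsilon$ exists and $\eta$ is future $\epsilon$-incomplete for all $\epsilon$ in \eqref{ran}, as claimed.

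I expect the main obstacle to lie not in the curvature analysis — which is already packaged into Corollary~\ref{mci} — but in the interface between the causal and the metric data: verifying that the abstractly produced causal $S$-ray is genuinely a unit-speed member of the timelike normal congruence carrying the initial datum $\theta_\epsilon(0)<0$, and that the focal point produced by completeness truly destroys maximality (Step~II). The acausality of $S$ needed to invoke Theorem~\ref{mom} is the one purely causal hypothesis that must be secured beforehand.
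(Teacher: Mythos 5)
Your argument from the acausal case onward matches the paper's proof essentially step for step: Theorem~\ref{mom} as the causality core, the fact that causal $S$-rays issue orthogonally from $S$ and are hence timelike, the normal-congruence Lagrange tensor field with $\theta_1(0)<0$, Corollary~\ref{mci} to force a focal point under $\epsilon$-completeness, and \cite[Proposition~5.1]{Min-Ray} to contradict maximality. (Your arrangement --- extract one $S$-ray and rule out completeness for each admissible $\epsilon$ --- is just the contrapositive of the paper's, which assumes completeness of all normal geodesics and concludes that $S$ is future causally araying.)

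However, there is a genuine gap at the very first step: you write ``since it is acausal its edge is empty'' and, in your closing paragraph, merely flag acausality as something ``to be secured beforehand''. It cannot be secured in general: being spacelike is a local condition while acausality is global, and a compact spacelike hypersurface need not be acausal (nor even achronal). For instance, identifying $t \mapsto t+1$ in a spatially compact flat model produces a compact spacelike hypersurface through which closed timelike curves pass. Without acausality, $S$ is not a partial Cauchy hypersurface and Theorem~\ref{mom} simply does not apply, so your proof collapses at the start for such $S$. The paper's proof handles exactly this: if $S$ is not acausal, one passes to the Geroch covering spacetime $M_G$, which contains an acausal homeomorphic copy of $S$; the convergence and contraction hypotheses lift to the covering, the argument is run there, and future timelike geodesic $\epsilon$-incompleteness projects back down to $M$. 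This covering step is the one missing idea in your proposal, and it is not cosmetic --- it is what makes the theorem true without an acausality hypothesis in its statement.
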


\begin{proof}
The proof goes as in \cite[Theorem~6.49]{Min-Rev}.
If $S$ is not acausal, then one can pass to the Geroch covering spacetime $M_G$
which contains an acausal homeomorphic copy of $S$ (see \cite[Section~2.15]{Min-Rev}).
Since the other assumptions lift to the covering spacetime,
and timelike geodesic $\epsilon$-incompleteness projects to the base,
we can assume that $S$ is acausal.
In particular, $S$ is achronal and a partial Cauchy hypersuface.

Assume that each timelike geodesic orthogonal to $S$ is future $\epsilon$-complete
for some $\epsilon \in \R$ that belongs to the timelike $\epsilon$-range in \eqref{ran}.
By Corollary~\ref{mci} and the hypothesis $\theta_1<0$,
every timelike geodesic issued normally from $S$ develops a focal point in the future,
thereby it cannot be a future causal $S$-ray (by \cite[Proposition~5.1]{Min-Ray}).
However, all future causal $S$-rays are necessarily orthogonal to $S$ and hence timelike,
therefore there are no future causal $S$-rays.
This shows that $S$ is future causally araying, a contradiction to Theorem~\ref{mom}.
$\qedd$
\end{proof}

\begin{remark}[Past case via reverse structure]\label{rm:reverse}
The past case of Theorem~\ref{hah}
can be seen by introducing the \emph{reverse structure} $\rev{L}(v):=L(-v)$.
Precisely, we consider the cone structure $\rev{\Omega}_x:=-\Omega_x$
and the weight $\rev{\psi}(v):=\psi(-v)$.
Then, for each timelike geodesic $\eta:(a,b) \lra M$ in $(M,L)$,
the reverse curve $\bar{\eta}(t):=\eta(-t)$ is a timelike geodesic in $(M,\rev{L})$,
and $\rev{\Ric}_N(\dot{\bar{\eta}}(t))=\rev{\Ric}_N(-\dot{\eta}(-t))=\Ric_N(\dot{\eta}(-t))$.
Now, assuming that $S$ is $\psi$-expanding with respect to $L$,
$S$ is $\rev{\psi}$-contracting with respect to $\rev{L}$ and Theorem~\ref{hah}
yields a timelike geodesic which is future $\epsilon$-incomplete for any $\epsilon$ in \eqref{ran}
with respect to $\rev{L}$.
Then its reverse curve is a timelike geodesic past $\epsilon$-incomplete
with respect to $L$, this gives the past case of Theorem~\ref{hah}.
\end{remark}

{\it Acknowledgements}.
SO would like to thank Erasmo Caponio for drawing his attention to the subject of Finsler spacetimes.
SO was supported in part by JSPS Grant-in-Aid for Scientific Research (KAKENHI) 19H01786.
The authors are grateful to an anonymous referee for his/her valuable comments.

{\small

}


\begin{thebibliography}{CGKM}

\bibitem[AJ]{AJ}
A.~B.~Aazami and M.~A.~Javaloyes,
Penrose's singularity theorem in a Finsler spacetime.
Classical Quantum Gravity {\bf 33} (2016), no.~2, 025003, 22 pp.

\bibitem[AB]{AB}
S.~B.~Alexander and R.~L.~Bishop,
Lorentz and semi-Riemannian spaces with Alexandrov curvature bounds.
Comm.\ Anal.\ Geom.\ {\bf 16} (2008), 251--282.

\bibitem[As]{As}
G.~S.~Asanov, Finsler geometry, relativity and gauge theories.
D.\ Reidel Publishing Co., Dordrecht, 1985.

\bibitem[BE]{BE}
D.~Bakry and M.~\'Emery, Diffusions hypercontractives. (French)
S\'eminaire de probabilit\'es, XIX, 1983/84, 177--206,
Lecture Notes in Math., {\bf 1123}, Springer, Berlin, 1985.

\bibitem[BGL]{BGL}
D.~Bakry, I.~Gentil and M.~Ledoux,
Analysis and geometry of Markov diffusion operators.
Springer, Cham, 2014.

\bibitem[BCS]{BCS}
D.~Bao, S.-S.~Chern and Z.~Shen, An introduction to Riemann--Finsler geometry.
Springer-Verlag, New York, 2000.

\bibitem[Be]{Be}
J.~K.~Beem, Indefinite Finsler spaces and timelike spaces.
Can.\ J.\ Math.\ {\bf 22} (1970), 1035--1039.

\bibitem[BEE]{BEE}
J.~K.~Beem, P.~E.~Ehrlich and K.~L.~Easley,
Global Lorentzian Geometry.
Marcel Dekker Inc., New York, 1996.

\bibitem[BS]{BS}
P.~Bernard and S.~Suhr,
Lyapounov functions of closed cone fields: From Conley theory to time functions.
Comm.\ Math.\ Phys.\ {\bf 359} (2018), 467--498.

\bibitem[BP]{BP}
J.~Bertrand and M.~Puel,
The optimal transport problem for relativistic costs.
Calc.\ Var.\ Partial Differential Equations {\bf 46} (2013), 353--374.

\bibitem[Bo]{Bo}
A.~Borde, Singularities in closed spacetimes.
Classical Quantum Gravity {\bf 2} (1985), 589--596.

\bibitem[Br]{Br}
Y.~Brenier, Extended Monge--Kantorovich Theory.
Optimal Transportation and Applications (Martina Franca, 2001).
Lecture Notes in Math., {\bf 1813}, 91--121. Springer, Berlin (2003).

\bibitem[Ca]{Ca}
J.~S.~Case,
Singularity theorems and the Lorentzian splitting theorem for the Bakry--Emery--Ricci tensor.
J.\ Geom.\ Phys.\ {\bf 60} (2010), 477--490.

\bibitem[Ch]{Ch}
I.~Chavel, Riemannian geometry. A modern introduction. Second edition.
Cambridge University Press, Cambridge, 2006.

\bibitem[CMS]{CMS}
D.~Cordero-Erausquin, R.~J.~McCann and M.~Schmuckenschl\"ager,
A Riemannian interpolation inequality \`a la Borell, Brascamp and Lieb.
Invent.\ Math.\ {\bf 146} (2001), 219--257.

\bibitem[CGKM]{CGKM}
P.~T.~Chru\'{s}ciel, J.~Grant, M.~Kunzinger and E.~Minguzzi,
Preface [Non-regular spacetime geometry].
J.\ Phys.\ Conf.\ Ser.\ {\bf 968} (2018), 011001, 3pp.

\bibitem[EM]{EM}
M.~Eckstein and T.~Miller,
Causality for nonlocal phenomena.
Ann.\ Henri Poincar\'e {\bf 18} (2017), 3049--3096.


\bibitem[FS]{FS}
A.~Fathi and A.~Siconolfi, On smooth time functions.
Math.\ Proc.\ Camb.\ Phil.\ Soc.\ {\bf 152} (2012), 303--339.

\bibitem[FSW]{FSW}
J.~L.~Friedman, K.~Schleich and D.~M.~Witt,
Topological censorship.
Phys.\ Rev.\ Lett.\ {\bf 71} (1993), 1486--1489.


\bibitem[GW]{GW}
G.~J.~Galloway and E.~Woolgar,
Cosmological singularities in Bakry--\'Emery spacetimes.
J.\ Geom.\ Phys.\ {\bf 86} (2014), 359--369.

\bibitem[Ga]{Ga}
D.~Gannon, Singularities in nonsimply connected space-times.
J.\ Mathematical Phys.\ {\bf 16} (1975), 2364--2367.

\bibitem[GKS]{GKS}
J.~D.~E.~Grant, M.~Kunzinger and C.~S\"amann,
Inextendibility of spacetimes and Lorentzian length spaces.
Ann.\ Global Anal.\ Geom.\ {\bf 55} (2019), 133--147.

\bibitem[HE]{HE}
S.~W.~Hawking and G.~F.~R.~Ellis,
The large scale structure of space-time.
Cambridge University Press, London-New York, 1973.

\bibitem[HP]{HP}
S.~W.~Hawking and R.~Penrose,
The singularities of gravitational collapse and cosmology.
Proc.\ Roy.\ Soc.\ London Ser.\ A {\bf 314} (1970), 529--548.

\bibitem[KeSu]{KeSu}
M.~Kell and S.~Suhr,
On the existence of dual solutions for Lorentzian cost functions.
Ann.\ Inst.\ H.\ Poincar\'e Anal.\ Non Lin\'eaire {\bf 37} (2020), 343--372.

\bibitem[KM]{KM}
A.~V.~Kolesnikov and E.~Milman,
Brascamp--Lieb-type inequalities on weighted Riemannian manifolds with boundary.
J.\ Geom.\ Anal.\ {\bf 27} (2017), 1680--1702.


\bibitem[KuSa]{KuSa}
M.~Kunzinger and C.~S\"amann, Lorentzian length spaces.
Ann.\ Global Anal.\ Geom.\ {\bf 54} (2018), 399--447.

\bibitem[LV]{LV}
J.~Lott and C.~Villani,
Ricci curvature for metric-measure spaces via optimal transport.
Ann.\ of Math.\ {\bf 169} (2009), 903--991.

\bibitem[Mc]{Mc}
R.~J.~McCann,
Displacement convexity of Boltzmann's entropy characterizes the strong energy condition
from general relativity.
Camb.\ J.\ Math.\ {\bf 8} (2020), 609--681.

\bibitem[Mil]{Mil-neg}
E.~Milman,
Beyond traditional curvature-dimension I:
new model spaces for isoperimetric and concentration inequalities in negative dimension.
Trans.\ Amer.\ Math.\ Soc.\ {\bf 369} (2017), 3605--3637.


\bibitem[Min1]{Min-chr}
E.~Minguzzi, Chronological spacetimes without lightlike lines are stably causal.
Comm.\ Math.\ Phys.\ {\bf 288} (2009), 801--819.

\bibitem[Min2]{Min-conv}
E.~Minguzzi, Convex neighborhoods for Lipschitz connections and sprays.
Monatsh.\ Math.\ {\bf 177} (2015), 569--625.

\bibitem[Min3]{Min-cone}
E.~Minguzzi, Light cones in Finsler spacetimes.
Comm.\ Math.\ Phys.\ {\bf 334} (2015), 1529--1551.

\bibitem[Min4]{Min-Ray}
E.~Minguzzi, Raychaudhuri equation and singularity theorems in Finsler spacetimes.
Classical Quantum Gravity {\bf 32} (2015), 185008, 26pp.

\bibitem[Min5]{Min-equiv}
E.~Minguzzi, An equivalence of Finslerian relativistic theories.
Rep.\ Math.\ Phys.\ {\bf 77} (2016), 45--55.

\bibitem[Min6]{Min-causality}
E.~Minguzzi, Causality theory for closed cone structures with applications.
Rev.\ Math.\ Phys.\ {\bf 31} (2019), 1930001, 139pp.

\bibitem[Min7]{Min-Rev}
E.~Minguzzi, Lorentzian causality theory.
Living Reviews in Relativity {\bf 22}, 3 (2019), https://doi.org/10.1007/s41114-019-0019-x.

\bibitem[MS]{MS}
A,~Mondino and S.~Suhr,
An optimal transport formulation of the Einstein equations of general relativity.
Preprint (2018). Available at {\sf arXiv:1810.13309}

\bibitem[Oh1]{Oint}
S.~Ohta, Finsler interpolation inequalities.
Calc.\ Var.\ Partial Differential Equations {\bf 36} (2009), 211--249.

\bibitem[Oh2]{ORand}
S.~Ohta, Vanishing S-curvature of Randers spaces.
Differential Geom.\ Appl.\ {\bf 29} (2011), 174--178.

\bibitem[Oh3]{Osplit}
S.~Ohta, Splitting theorems for Finsler manifolds of nonnegative Ricci curvature.
J.\ Reine Angew.\ Math.\ {\bf 700} (2015), 155--174.

\bibitem[Oh4]{Oneg}
S.~Ohta, $(K,N)$-convexity and the curvature-dimension condition for negative $N$.
J.\ Geom.\ Anal.\ {\bf 26} (2016), 2067--2096.

\bibitem[Oh5]{Onlga}
S.~Ohta, Nonlinear geometric analysis on Finsler manifolds.
Eur.\ J.\ Math.\ {\bf 3} (2017), 916--952.

\bibitem[OS]{OSbw}
S.~Ohta and K.-T.~Sturm, Bochner--Weitzenb\"ock formula and Li--Yau estimates on Finsler manifolds.
Adv.\ Math.\ {\bf 252} (2014), 429--448.

\bibitem[ON]{ON}
B.~O'Neill, Semi-Riemannian geometry: With applications to relativity.
Academic Press, Inc., New York, 1983.

\bibitem[Pen]{P}
R.~Penrose,
Gravitational collapse and space-time singularities.
Phys.\ Rev.\ Lett.\ {\bf 14} (1965), 57--59.

\bibitem[Per]{Per}
V.~Perlick, Fermat principle in Finsler spacetimes.
Gen.\ Relativ.\ Gravit.\ {\bf 38} (2006), 365--380.

\bibitem[vRS]{vRS}
M.-K.~von Renesse and K.-T.~Sturm,
Transport inequalities, gradient estimates, entropy and Ricci curvature.
Comm.\ Pure Appl.\ Math.\ {\bf 58} (2005), 923--940.

\bibitem[Sh]{Shlec}
Z.~Shen, Lectures on Finsler geometry.
World Scientific Publishing Co., Singapore, 2001.

\bibitem[St1]{StI}
K.-T.~Sturm, On the geometry of metric measure spaces.~I.
Acta Math.\ {\bf 196} (2006), 65--131.

\bibitem[St2]{StII}
K.-T.~Sturm, On the geometry of metric measure spaces.~II.
Acta Math.\ {\bf 196} (2006), 133--177.

\bibitem[Su]{Su}
S.~Suhr, Theory of optimal transport for Lorentzian cost functions.
M\"unster J.\ Math.\ {\bf 11} (2018), 13--47.

\bibitem[Vi]{Vi}
C.~Villani, Optimal transport, old and new.
Springer-Verlag, Berlin, 2009.

\bibitem[WW1]{WW1}
E.~Woolgar and W.~Wylie,
Cosmological singularity theorems and splitting theorems for $N$-Bakry--\'Emery spacetimes.
J.\ Math.\ Phys.\ {\bf 57} (2016), 022504, 1--12.

\bibitem[WW2]{WW2}
E.~Woolgar and W.~Wylie,
Curvature-dimension bounds for Lorentzian splitting theorems.
J.\ Geom.\ Phys.\ {\bf 132} (2018), 131--145.

\bibitem[Wy]{Wy}
W.~Wylie, A warped product version of the Cheeger--Gromoll splitting theorem.
Trans. Amer. Math. Soc. {\bf 369} (2017), 6661--6681.
\end{thebibliography}
\end{document}